\DeclareMathOperator{\diag}{diag}
\def\interleave{|\kern-.25ex|\kern-.25ex|}
\def\interleavesub{|\kern-.15ex|\kern-.15ex|}
\newcommand{\nNorm}[1]{\left|\kern-.25ex\left|\kern-.25ex\left| {#1}\right|\kern-.25ex\right|\kern-.25ex\right|}
\numberwithin{equation}{section}
\theoremstyle{plain}
\newtheorem{theorem}{Theorem}[section]
\newtheorem{stheorem}{Theorem}
\newtheorem{corollary}{Corollary}[section]
\newtheorem{proposition}{Proposition}[section]
\newtheorem{lemma}{Lemma}[section]
\newtheoremstyle{remark}{\topsep}{\topsep}%
     {\normalfont}
     {}           
     {\bfseries}  
     {.}          
     {.5em}       
     {\thmname{#1}\thmnumber{ #2}\thmnote{ #3}}
\theoremstyle{remark}
\newtheorem{remark}{Remark}[section]
\newtheorem{example}{Example}[section]
\newtheorem{definition}{Definition}[section]
\mathchardef\mh="2D
\def\comma{\unskip,~}
\long\def\comment#1{}
\def\reals{{\mathbb R}}
\def\R{\reals}
\def\P{{\mathbb P}}
\def\E{{\mathbb E}}
\def\supp{\mathop{\text{supp}\kern.2ex}}
\def\argmin{\mathop{\text{\rm arg\,min}}}
\let\tilde\widetilde
\let\hat\widehat
\let\tilde\widetilde
\def\given{{\,|\,}}
\def\1{{(1)}}
\def\2{{(2)}}
\long\def\comment#1{}
\def\diag{\mathop{\rm diag}}
\def\threebars{\mbox{$|\kern-.25ex|\kern-.25ex|$}}
\def\coloneqq{:=}
\def\mathbf#1{\mbox{\boldmath $#1$}} 
\def\bar#1{\overline{#1}}
\begin{document}

\begin{frontmatter}
\title{Faithful Variable Screening for\\ High-Dimensional Convex Regression}
\runtitle{Faithful Variable Screening for Convex Regression}

\begin{aug}
\vskip10pt
\author{\fnms{Min} \snm{Xu${}^*$}\ead[label=e1]{minx@cs.cmu.edu}}
\comma 
\author{\fnms{Minhua} \snm{Chen${}^\dag$}\ead[label=e2]{minhua@galton.uchicago.edu}}
\and
\author{\fnms{John}
  \snm{Lafferty${}^\dag$}\ead[label=e3]{lafferty@galton.uchicago.edu}}
\address{${}^*$Machine Learning Department, Carnegie Mellon University,
  Pittsburgh PA 15213}
\address{${}^\dag$Department of Statistics, University of Chicago, Chicago IL 60637}
\end{aug}

\begin{abstract}
  We study the problem of variable selection in convex nonparametric
  regression.  Under the assumption that the true regression function
  is convex and sparse, we develop a screening procedure to select a
  subset of variables that contains the relevant variables. Our
  approach is a two-stage quadratic programming method that estimates
  a sum of one-dimensional convex functions, followed by
  one-dimensional concave regression fits on the residuals.  In
  contrast to previous methods for sparse additive models, the
  optimization is finite dimensional and requires no tuning parameters
  for smoothness. Under appropriate assumptions, we prove that the
  procedure is faithful in the population setting, yielding no false
  negatives.  We give a finite sample statistical analysis, and
  introduce algorithms for efficiently carrying out the required
  quadratic programs. The approach leads to computational and
  statistical advantages over fitting a full model, and provides an
  effective, practical approach to variable screening in convex
  regression.
\end{abstract}

\begin{keyword}
\kwd{nonparametric regression}
\kwd{convex regression}
\kwd{variable selection}
\kwd{quadratic programming}
\kwd{additive model}
\end{keyword}

\vskip20pt 
\end{frontmatter}

\maketitle

\vskip10pt


\section{Introduction}

Shape restrictions such as monotonicity, convexity, and concavity
provide a natural way of limiting the complexity of many statistical
estimation problems.  Shape-constrained estimation is not as well
understood as more traditional nonparametric estimation involving
smoothness constraints.  For instance, the minimax rate of convergence
for multivariate convex regression has yet to be rigorously
established in full generality.  Even the one-dimensional case is
challenging, and has been of recent interest \citep{guntusen:13}.

In this paper we study the problem of variable selection in
multivariate convex regression.  Assuming that the regression function
is convex and sparse, our goal is to identify the relevant variables.
We show that it suffices to estimate a sum of one-dimensional convex
functions, leading to significant computational and statistical
advantages.  This is in contrast to general nonparametric regression,
where fitting an additive model can result in false negatives.  Our
approach is based on a two-stage quadratic programming procedure.  In
the first stage, we fit an convex additive model, imposing a sparsity
penalty.  In the second stage, we fit a concave function on the
residual for each variable.  As we show, this non-intuitive second
stage is in general necessary.  Our first result is that this
procedure is faithful in the population setting, meaning that it
results in no false negatives, under mild assumptions on the density
of the covariates.  Our second result is a finite sample statistical
analysis of the procedure, where we upper bound the statistical rate
of variable screening consistency.  An additional contribution is to show how the
required quadratic programs can be formulated to be more scalable.  We
give simulations to illustrate our method, showing that it performs in
a manner that is consistent with our analysis.

Estimation of convex functions arises naturally in several
applications.  Examples include geometric programming \citep{Boyd04},
computed tomography \citep{Prince:90}, target reconstruction
\citep{Lele:92}, image analysis \citep{Golden:06} and circuit design
\citep{Hannah:12}.  Other applications include queuing theory
\citep{Chen:01} and economics, where it is of interest to estimate
concave utility functions \citep{Pratt:68}.  See \cite{Lim:12} for
other applications.  
Beyond cases where the assumption of convexity is
natural, the convexity assumption can be attractive as a
tractable, nonparamametric relaxation of the linear model.  

Recently, there has been increased research activity on shape-constrained estimation. \cite{guntusen:13} analyze univariate convex regression and show surprisingly that the risk of the MLE is adaptive to the complexity of the true function. \cite{seijo2011nonparametric} and \cite{Lim:12} study maximum likelihood estimation of multivariate convex regression and independently establish its consistency. \cite{Cule:10} and \cite{kim2014global} analyze log-concave density estimation and prove consistency of the MLE; the latter further show that log-concave density estimation has minimax risk lower bounded by $n^{-2/(d+1)}$ for $d \geq 2$, refuting a common notion that the condition of convexity is equivalent, in estimation difficulty, to the condition of having two bounded derivatives. Additive shape-constrained estimation has also been studied; \cite{pya2014shape} propose a penalized B-spline estimator while \cite{chen2014generalised} show the consistency of the MLE. To the best of our knowledge however, there has been no work on variable selection and and estimation of high-dimensional convex functions.

Variable selection in general nonparametric regression or function
estimation is a notoriously difficult problem. \citet{lafferty2008rodeo} develop a greedy procedure for
adjusting bandwidths in a local linear regression estimator,
and show that the procedure achieves the minimax rate
as if the relevant variables were isolated in advance.
But the method only provably scales to dimensions $p$ that 
grow logarithmically in the sample size $n$, i.e., $p = O(\log n)$.  This
is in contrast to the high dimensional scaling behavior
known to hold for sparsity selection in linear models
using $\ell_1$ penalization, where $n$
is logarithmic in the dimension $p$. \citet{bertin:08}
develop an optimization-based approach in
the nonparametric setting, applying the lasso
in a local linear model at each test point.  Here again,
however, the method only scales as $p = O(\log n)$,
the low-dimensional regime.
An approximation theory approach to the same
problem is presented in \cite{devore:11}, 
using techniques based on hierarchical hashing schemes,
similar to those used for ``junta'' problems \citep{mossel:04}.
Here it is shown that the sample complexity scales as $n > \log p$ 
if one adaptively selects the points on
which the high-dimensional function is evaluated.

\citet{dalalyan:12} show that the exponential scaling $n=O(\log p)$ is
achievable if the underlying function is assumed to be smooth with
respect to a Fourier basis. They also give support for the intrinsic
difficulty of variable selection in nonparametric regression, giving
lower bounds showing that consistent variable selection is not
possible if $n < \log p$ or if $n < \exp s$, where $s$ is the number
of relevant variables.  Variable selection over kernel classes is
studied by \citet{Kolch:10}.

Perhaps more closely related to the present work is the framework
studied by \cite{Raskutti:12} for sparse additive models, where sparse
regression is considered under an additive assumption, with each
component function belonging to an RKHS.  An advantage of working over
an RKHS is that nonparametric regression with a sparsity-inducing
regularization penalty can be formulated as a finite dimensional
convex cone optimization.  On the other hand, smoothing parameters for
the component Hilbert spaces must be chosen, leading to extra tuning
parameters that are difficult to select in practice. There has also been
work on estimating sparse additive models over a spline basis, for 
instance the work of \cite{huang2010variable}, but these approaches too 
require the tuning of smoothing parameters. 

While nonparametric, the convex regression problem is naturally
formulated using finite dimensional convex optimization, with no
additional tuning parameters. The convex additive model can be used
for convenience, without assuming it to actually hold, for the purpose
of variable selection. As we show, our method scales to high
dimensions, with a dependence on the intrinsic dimension $s$ that
scales polynomially, rather than exponentially as in the general case
analyzed in \cite{dalalyan:12}.

In the following section we give a high-level summary of our technical
results, including additive faithfulness, variable selection 
consistency, and high dimensional scaling.  In
Section~\ref{sec:additivefaithful} we give a detailed account
of our method and the conditions under which we can guarantee
consistent variable selection.  In Section~\ref{sec:optimization}
we show how the required quadratic programs can be reformulated
to be more efficient and scalable.  In Section~\ref{sec:finitesample}
we give the details of our finite sample analysis, showing
that a sample size growing as $n = O\big(\textrm{poly}(s) \log p\big)$
is sufficient for variable selection.  In Section~\ref{sec:thesims}
we report the results of simulations that illustrate our methods
and theory.  The full proofs are
given in a technical appendix.



\def\x{\mathbf{x}}

\section{Overview of Results}

In this section we provide a high-level description of our technical
results.  The full technical details, the precise statement of the
results, and their detailed proofs are provided in following sections.

Our main contribution is an analysis of an additive approximation for identifying
relevant variables in convex regression.  
We prove a result that shows when and how the additive approximation
can be used without introducing false negatives in the population
setting.  In addition, we develop algorithms for the efficient implementation of
the quadratic programs required by the procedure.  

We first establish some notation, to be used throughout the  paper.
If $\mathbf{x}$ is a vector, we use $\mathbf{x}_{-k}$ to denote the
vector with the $k$-th coordinate removed. If $\mathbf{v} \in \R^n$, then
$v_{(1)}$ denotes the smallest coordinate of $\mathbf{v}$ in
magnitude, and $v_{(j)}$ denotes the $j$-th smallest; $\mathbf{1}_n \in \R^n$
is the all ones vector. If $X \in \R^p$ is a random variable and $S \subset
\{1,...,p\}$, then $X_S$ is the subvector of $X$ restricted to
the coordinates in $S$. Given $n$ samples $X^{(1)},...,X^{(n)}$, we use
$\bar{X}$ to denote the sample mean. Given a random variable
$X_k$ and a scalar $x_k$, we use $\E[\,\cdot \given x_k]$ as a shorthand
for $\E[\, \cdot \given X_k = x_k]$.

\subsection{Faithful screening}

The starting point for our approach is the observation that least squares
nonparametric estimation under convexity constraints is equivalent to
a finite dimensional quadratic program.  Specifically, the infinite
dimensional optimization 
\begin{align}
\begin{split}
\text{minimize} & \quad \sum_{i=1}^n (Y_i - f(\x_i))^2 \\
\text{subject to} &  \quad f:\reals^p\rightarrow\reals\ \text{is
  convex}
\end{split}
\end{align}
is equivalent to the finite dimensional quadratic
program 
\begin{align}
\label{eq:convreg}
\begin{split}
\text{minimize}_{f, \beta} & \;\; \sum_{i=1}^n (Y_i - f_i)^2 \\
\text{subject to} & \;\; f_j \geq f_i + \beta_i^T (\x_j-\x_i),\; \text{for
    all $i,j$}.
\end{split}
\end{align}
Here $f_i$ is the estimated function value $f(\x_i)$, and the vectors
$\beta_i \in \reals^d$ represent supporting hyperplanes to the
epigraph of $f$.  See \cite{Boyd04}, Section 6.5.5.
Importantly, this finite dimensional quadratic program does
not have tuning parameters for smoothing the function. 

This formulation of convex regression is subject to the curse
of dimensionality.  Moreover, attempting to select variables
by regularizing the subgradient vectors $\beta_i$ with
a group sparsity penality is not effective.  Intuitively, the reason
is that all $p$ components of the subgradient $\beta_i$
appear in every convexity constraint 
$ f_j \geq f_i + \beta_i^T (\x_j-\x_i)$; small changes to the 
subgradients may not violate the constraints.  Experimentally,
we find that regularization with a group sparsity penality
will make the subgradients of irrelevant variables 
small, but may not zero them out completely.

This motivates us to consider an additive approximation. 
Under a convex additive model, each component of the subgradient
appears only in the convexity constraint for
the corresponding variable:
\begin{equation}
f_{ki'} \geq f_{ki} + \beta_{ki}(x_{ki'}-x_{ki}) 
\end{equation}
where $f_{ki} = f_{k}(x_{ki})$ and $\beta_{ki}$ is the
subgradient at point $x_{ki}$.   As we show, this leads to
an effective variable selection procedure.  The
shape constraints play an essential role.
For general regression, using an additive approximation for variable
selection may make errors.  In particular, the nonlinearities in the
regression function may result in an additive component being wrongly
zeroed out.  We show that this cannot happen for convex regression
under appropriate conditions.

We say that a differentiable function $f$ depends on variable $x_k$ if
$\partial_{x_k} f \neq 0$ with probability greater than zero.  An additive approximation is given by
\begin{equation}
\{f_k^*\}, \mu^* \coloneqq \argmin_{f_1,\ldots, f_p, \mu} \Bigl\{ 
             \E \Bigl( f(X) - \mu - \sum_{k=1}^p f_k(X_k)\Bigr)^2 
         \,:\, \E f_k(X_k) = 0 \Bigr\}.
\end{equation}
We say that $f$ is \textit{additively faithful} in case $f^*_k = 0$
implies that $f$ does not depend on coordinate $k$.
Additive faithfulness is a desirable property
since it implies that an additive approximation may allow us to 
screen out irrelevant variables.

Our first result shows that convex multivariate functions are
additively faithful
under the following assumption on the distribution of the data.
\begin{definition}
  Let $p(\mathbf{x})$ be a density supported on $[0,1]^p$.  Then $p$
  satisfies the \emph{boundary flatness condition} if for all $j$, and
  for all $\mathbf{x}_{-j}$,
\[
\frac{\partial p(\mathbf{x}_{-j} \given x_j)}{\partial x_j}  =  
\frac{\partial^2 p(\mathbf{x}_{-j} \given x_j)}{\partial x_j^2} = 0
\quad \trm{at $x_j = 0$ and $x_j = 1$}.
\]
\end{definition}

As discussed in Section~\ref{sec:additivefaithful}, this is a relatively weak condition. 
Our first result is that this condition suffices in the population
setting of convex regression.

\begin{stheorem}
  Let $p$ be a positive density supported on $C=[0,1]^p$ that
  satisfies the boundary flatness property. If $f$ is convex and twice
  differentiable, then $f$ is additively faithful under $p$.
\end{stheorem}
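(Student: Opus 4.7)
The plan is to argue the contrapositive: assuming $f_k^* = 0$, I show $\partial_{x_k} f \equiv 0$ on $[0,1]^p$, so that $f$ does not depend on $x_k$. The first-order optimality condition for the additive projection (obtained by perturbing $f_k$ subject to the $\E f_k(X_k) = 0$ constraint) asserts that for every $x_k \in [0,1]$,
\[
\int_{[0,1]^{p-1}} \Bigl(f(x_k, \mathbf{x}_{-k}) - \mu^* - \sum_{j \neq k} f_j^*(x_j)\Bigr)\, p(\mathbf{x}_{-k} \mid x_k)\, d\mathbf{x}_{-k} = 0.
\]

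I would then differentiate this identity once and twice in $x_k$ and evaluate at the boundary $x_k \in \{0,1\}$. Each term picking up a factor of $\partial_{x_k} p(\mathbf{x}_{-k}\mid x_k)$ or $\partial_{x_k}^2 p(\mathbf{x}_{-k}\mid x_k)$ is annihilated by boundary flatness, and the pieces involving $\sum_{j\neq k} f_j^*$ contribute nothing since those summands do not depend on $x_k$. What survives at $x_k = 0$ is
\[
\int \partial_{x_k} f(0, \mathbf{x}_{-k})\, p(\mathbf{x}_{-k} \mid 0)\, d\mathbf{x}_{-k} = 0 \quad \text{and} \quad \int \partial_{x_k}^2 f(0, \mathbf{x}_{-k})\, p(\mathbf{x}_{-k} \mid 0)\, d\mathbf{x}_{-k} = 0,
\]
with the analogous pair of equations at $x_k = 1$.

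Next I invoke convexity. Since $\partial_{x_k}^2 f \geq 0$ pointwise and $p$ is strictly positive, the second integral forces $\partial_{x_k}^2 f(0, \mathbf{x}_{-k}) = 0$ for all $\mathbf{x}_{-k}$. The Hessian $\nabla^2 f(0, \mathbf{x}_{-k})$ is positive semidefinite with a zero in its $(k,k)$ slot; the standard Cauchy--Schwarz inequality for PSD matrices, $M_{ij}^2 \leq M_{ii}M_{jj}$, then forces the entire $k$-th row and column of the Hessian to vanish there. Consequently $\nabla_{\mathbf{x}_{-k}}\bigl(\partial_{x_k} f\bigr)(0, \mathbf{x}_{-k}) = 0$, so $\partial_{x_k} f(0, \cdot)$ is constant on the connected set $[0,1]^{p-1}$; the first integral identifies this constant as $0$. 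The same reasoning at $x_k = 1$ yields $\partial_{x_k} f(1, \mathbf{x}_{-k}) = 0$ for all $\mathbf{x}_{-k}$.

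Finally, for each fixed $\mathbf{x}_{-k}$, convexity makes $\partial_{x_k} f(\cdot, \mathbf{x}_{-k})$ nondecreasing in $x_k$, and being sandwiched between the boundary values $0$ and $0$ it must vanish identically on $[0,1]$. The main obstacle is passing from boundary-only information to a statement across the whole cube: the key that unlocks this is the PSD fact that a zero diagonal entry in the Hessian kills an entire row, which promotes the pointwise identity $\partial_{x_k}^2 f(0,\cdot) = 0$ into a constancy statement about $\partial_{x_k} f(0,\cdot)$ in $\mathbf{x}_{-k}$, after which the monotonicity of $\partial_{x_k} f$ in $x_k$ interpolates from the two faces to the interior.
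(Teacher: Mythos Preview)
Your proposal is correct and follows essentially the same route as the paper's proof: differentiate the optimality identity twice in $x_k$, use boundary flatness to reduce to $\int \partial_{x_k} f\, p = 0$ and $\int \partial_{x_k}^2 f\, p = 0$ at $x_k\in\{0,1\}$, use positivity of $p$ and $\partial_{x_k}^2 f\ge 0$ to force the $(k,k)$ Hessian entry to vanish, invoke the PSD fact that a zero diagonal kills its row (the paper cites Horn--Johnson; you use the equivalent Cauchy--Schwarz inequality $M_{ij}^2\le M_{ii}M_{jj}$), and then sandwich via monotonicity of $\partial_{x_k} f$.
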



Intuitively, an additive approximation zeroes out variable $k$ when, fixing $x_k$, every
``slice'' of $f$ integrates to zero. We prove this result
by showing that ``slices'' of convex
functions that integrate to zero cannot be ``glued together'' while
still maintaining convexity.

While this shows that convex functions are additively faithful, it is difficult to
estimate the optimal additive functions.  The difficulty
is that $f^*_k$ need not be
a convex function, as we show through a counterexample
in Section~\ref{sec:additivefaithful}. It may be possible to estimate $f^*_k$ with smoothing parameters, but, for the purpose of variable screening, it is sufficient in fact to approximate $f^*_k$ by a \emph{convex} additive model. 

Our next result states that a convex additive fit, combined with a series of univariate concave fits, is faithful. We abuse notation in Theorem~\ref{thm:summary_acdc_population} and let the notation $f^*_k$ represent convex additive components.

\begin{stheorem}
\label{thm:summary_acdc_population}
Suppose $p(\mathbf{x})$ is a positive
density on $C=[0,1]^p$ that satisfies the boundary flatness
condition. Suppose that $f$ is convex and twice-differentiable.
and that $\partial_{x_k} f$, $\partial_{x_k} p( \mathbf{x}_{-k}
\given x_k )$, and $\partial_{x_k}^2 p( \mathbf{x}_{-k} \given x_k)$
are all continuous as functions on $C$.
Define
\begin{equation}
\{ f^*_k \}_{k=1}^p,\mu^* = \arg\min_{\{f_k\},\mu} \Big \{
\E\Bigl( f(X) - \mu - \sum_{k=1}^s f_k(X_k) \Bigr)^2 \,:\, f_k \in
\mathcal{C}^1, \, \E f_k(X_k) = 0 \Big \}
\end{equation} where $\mathcal{C}^1$ is the set of univariate convex
functions, and, with respective to $f^*_k$'s from above, define
\begin{equation}
g^*_k = \arg\min_{g_k} \Big\{ \E\Bigl( f(X) - \mu^* - 
\sum_{k' \neq k} f^*_{k'}(X_{k'}) - g_k \Bigr)^2 \,:\, g_k \in \mh
\mathcal{C}^1, \E g_k(X_k) = 0 \Big\},
\end{equation}
with $\mh{}\mathcal{C}^1$ denoting the set of univariate concave
functions.  Then $f^*_k = 0$ and $g^*_k = 0$ implies that $f$ does not
depend on $x_k$, i.e., $\partial_{x_k} f(\mathbf{x}) = 0$ with
probability one.
\end{stheorem}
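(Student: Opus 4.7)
The strategy is to extract, from the two-stage optimality, a single conditional moment identity $\E[r \mid X_k]=0$ for an appropriate residual $r$, and then feed that identity into the same boundary-flatness/convexity argument that drives the preceding theorem on additive faithfulness.

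I would first write out the first-order conditions of each stage. Joint optimality of stage 1 over the product of convex cones decouples coordinate-wise, so feasibility of $f_k^* + \epsilon\psi$ for $\epsilon > 0$ and $\psi$ convex with $\E\psi(X_k) = 0$, together with $f_k^* = 0$, yields
\begin{equation}
\E[r(X)\,\psi(X_k)] \le 0 \qquad \text{for every convex } \psi \text{ with } \E\psi(X_k) = 0,
\end{equation}
where $r(X) := f(X) - \mu^* - \sum_{k' \neq k} f^*_{k'}(X_{k'})$. Since $f_k^* = 0$, this same $r$ is the residual fed to stage 2, and optimality of $g_k^* = 0$ among concave mean-zero functions gives the mirror inequality with ``convex'' replaced by ``concave.''

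Combining these is the heart of the argument. For any convex mean-zero $\psi$, the function $-\psi$ is concave mean-zero, so the two stage-inequalities together force $\E[r\,\psi(X_k)] = 0$ for every convex (equivalently, every concave) mean-zero $\psi$. Any smooth mean-zero $\phi$ splits as $\phi = \phi_+ + \phi_-$ with $\phi_+(x_k) = \phi(x_k) + \tfrac{M}{2}x_k^2 - c$ convex mean-zero (for $M$ large enough and $c$ chosen so the first bracket has zero mean under $X_k$) and $\phi_-(x_k) = -\tfrac{M}{2}x_k^2 + c$ concave mean-zero; hence $\E[r\,\phi(X_k)] = 0$ for all smooth mean-zero $\phi$, and by density for all mean-zero $\phi \in L^2$. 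Because the optimal constant satisfies $\mu^* = \E f$ (forcing $\E r = 0$), we conclude
\begin{equation}
\E[f(X) \mid X_k = x_k] = \mu^* + \sum_{k' \neq k} \E[f^*_{k'}(X_{k'}) \mid X_k = x_k] \qquad \text{for all } x_k \in [0,1].
\end{equation}

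To pass from this identity to $\partial_{x_k} f \equiv 0$, I would replay the boundary-flatness step of the preceding theorem. Differentiating both sides twice in $x_k$ and evaluating at $x_k \in \{0,1\}$, boundary flatness annihilates every term involving $\partial_{x_k} p(\mathbf{x}_{-k} \mid x_k)$ or $\partial_{x_k}^2 p(\mathbf{x}_{-k} \mid x_k)$, and since each $f^*_{k'}$ is itself $x_k$-free, the right-hand side contributes nothing else. What remains is $\int (\partial^2_{x_k} f)(\mathbf{x}_{-k}, x_k)\, p(\mathbf{x}_{-k} \mid x_k)\, d\mathbf{x}_{-k} = 0$ at $x_k \in \{0,1\}$; convexity and positivity of $p$ then force $\partial^2_{x_k} f$ to vanish on both faces, and the slice-gluing step of the preceding theorem upgrades this to $\partial_{x_k} f \equiv 0$ on $C$.

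The main obstacle I expect is the last clause: verifying that the slice-gluing step of the preceding theorem depends only on the identity $\E[r \mid X_k] = 0$ together with convexity and boundary flatness, and does not covertly exploit the fact that the other additive components there were obtained from an unconstrained $L^2$ projection rather than from a convex projection. Because each $f^*_{k'}(X_{k'})$ is functionally $x_k$-free and enters only through conditional expectations whose $x_k$-derivatives are controlled by derivatives of $p$, I expect that argument to carry over verbatim, but this is the one step that requires careful re-inspection.
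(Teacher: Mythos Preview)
Your proposal is correct, and the overall architecture matches the paper's: reduce to showing that the conditional expectation $h^*_k(x_k) := \E[f(X) - \mu^* - \sum_{k'\neq k} f^*_{k'}(X_{k'}) \mid X_k = x_k]$ vanishes identically, then invoke the boundary-flatness/slice-gluing argument of the preceding theorem. Your closing remark that the latter step uses only the fact that $\phi(\mathbf{x}_{-k}) = \sum_{k'\neq k} f^*_{k'}(x_{k'})$ is $x_k$-free is exactly right; the paper isolates this as a separate lemma (its Lemma~\ref{cor:faithfulness_extension}).

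Where you differ from the paper is in how you extract $h^*_k \equiv 0$ from the two optimality conditions. The paper tests with two specific one-parameter families: first $c(x_k^2 - m_k^2)$, convex for $c>0$ and concave for $c<0$, so that $f^*_k = g^*_k = 0$ forces $\E[r\,(X_k^2 - m_k^2)] = 0$; then, invoking a separate regularity lemma that $h^*_k$ has second derivative bounded below, it tests with $c\big(h^*_k(x_k) + \alpha(x_k^2 - m_k^2)\big)$ for large $\alpha$, yielding $\E[h^*_k(X_k)^2] = 0$. Your route is more direct: you combine the two variational inequalities to get $\E[r\,\psi(X_k)] = 0$ for \emph{every} convex mean-zero $\psi$, then use the convex-plus-concave decomposition of an arbitrary $C^2$ function to extend orthogonality to all of $L^2$. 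This avoids the paper's auxiliary lemma on the second derivative of $h^*_k$, replacing it with the trivial observation that any $C^2$ function on $[0,1]$ has bounded second derivative. The trade-off is that you need a short density argument; the paper's two-test-function approach is more hands-on but requires the extra regularity lemma.
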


This result naturally suggests a two-stage screening
procedure for variable selection. In the first stage we fit a sparse convex
additive model $\{\hat f_k\}$.  In the second stage we
fit a concave function $\hat g_k$ to the residual for each variable
having a zero convex component $\hat f_k$.  If both $\hat f_k = 0$ and
$\hat g_k = 0$, we can safely discard variable $x_k$.  
As a shorthand, we refer to this two-stage procedure as AC/DC.  In 
the AC stage we fit an additive convex model.  In the DC 
stage we fit decoupled concave functions on the residuals.  The
decoupled nature of the DC stage allows all of the fits to
be carried out in parallel. The entire process involves no smoothing parameters.
Our next results concern the required optimizations, and their finite
sample statistical performance.

\subsection{Optimization}

In Section~\ref{sec:optimization} we present optimization algorithms for the additive convex regression stage.
The convex constraints for the additive functions, analogous to 
the multivariate constraints \eqref{eq:convreg},
are  that each component $f_{k}(\cdot)$ 
can be represented by its supporting hyperplanes, i.e.,
\begin{equation}
      f_{ki'} \geq f_{ki} + \beta_{ki}(x_{ki'}-x_{ki}) \quad \text{for
        all $i,i'$}
\end{equation}
where $f_{ki}\coloneqq f_{k}(x_{ki})$ and $\beta_{ki}$ is the
subgradient at point $x_{ki}$. While this apparently requires $O(n^2
p)$ equations to impose the supporting hyperplane constraints, 
in fact, only $O(np)$ constraints suffice.  This is because univariate convex functions are
characterized by the condition that the subgradient, which is a scalar, must
increase monotonically. This observation leads to a reduced quadratic
program with $O(np)$ variables and $O(np)$ constraints. 

Directly applying a QP solver to this optimization is still computationally
expensive for relatively large
$n$ and $p$.  We thus develop a block
coordinate descent method, where in each step we solve a sparse
quadratic program involving $O(n)$ variables and $O(n)$ constraints.  This 
is efficiently solved using optimization packages 
such as {\sc mosek}.  The details of these optimizations
are given in Section~\ref{sec:optimization}.

\subsection{Finite sample analysis}

In Section~\ref{sec:finitesample} 
we analyze the finite sample variable selection consistency of convex
additive modeling, without making the assumption that the true
regression function $f_0$ is additive.  Our analysis first establishes
a sufficient deterministic condition for variable selection 
consistency, and then considers a stochastic setting.
Our proof technique decomposes the KKT conditions for the optimization
in a manner that is similar to the now standard \emph{primal-dual
  witness} method~\citep{wainwright2009sharp}. 

We prove separate results that allow us to analyze false negative
rates and false positive rates.  To control false positives,
we analyze scaling conditions on the regularization parameter
$\lambda_n$ for 
group sparsity needed to zero out irrelevant variables
$k \in S^c$, where $S\subset \{1,\ldots, p\}$ is the set of
variables selected by the AC/DC algorithm in the population setting.
To control false negatives, we analyze the restricted regression
where the variables in $S^c$ are zeroed out, following the primal-dual
strategy.  

Each of our theorems uses a subset of the following assumptions:
\begin{packed_enum}
\item[A1:] $X_S, X_{S^c}$ are independent. 
\item[A2:] $f_0$ is convex and twice-differentiable. 
\item[A3:] $\|f_0\|_\infty \leq sB$ and $\|f^*_k \| \leq B$ for all $k$.
\item[A4:] The noise is mean-zero sub-Gaussian, independent of $X$.
\end{packed_enum}
In Assumption A3, $f^*=\sum_k f^*_k$ denotes the optimal additive projection of $f_0$ in the population setting.

Our analysis involves parameters $\alpha_+$ and $\alpha_-$,
which are measures of the signal strength of the weakest variable:
\begin{align*}
\alpha_+ &= \inf_{f \in \mathcal{C}^p \,:\, \textrm{supp}(f)\subsetneq \textrm{supp}(f^*)} 
       \Big\{ \mathbb{E} \big( f_0(X) - f(X) \big)^2 - 
        \mathbb{E} \big( f_0(X) - f^*(X) \big)^2  \Big\}\\
\alpha_- &=   \min_{k \in S \,:\, g^*_k \neq 0}
      \Big\{ \mathbb{E} \big( f_0(X) - f^*(X) \big)^2 - 
    \mathbb{E} \big( f_0(X) - f^*(X) - g^*_k(X_k) \big)^2 \Big\}.
\end{align*}

 Intuitively, if $\alpha_+$ is small, then it is easier to make a
false omission in the additive convex stage of the procedure. If
$\alpha_-$ is small, then it is easier to make a false omission in
the decoupled concave stage of the procedure.

We make strong assumptions on the covariates in A1 in order to make
very weak assumptions on the true regression function $f_0$ in
A2; in particular, we do not assume that $f_0$ is additive. 
Relaxing this condition is an important direction for future work.
We also include an extra
boundedness constraint to use new bracketing number
results \citep{kim2014global}.

Our main result is the following.
\begin{stheorem}
Suppose assumptions A1-A4 hold. Let $\{\hat{f}_i\}$ be any AC solution and
let $\{\hat{g}_k\}$ be any DC solution, both estimated with 
regularization parameter $\lambda$ scaling as
$\lambda = \Theta \Big( sB \sqrt{\frac{1}{n} \log^2 np} \Big)$. 
Suppose in addition that
\begin{gather}
\alpha_f/\tilde{\sigma} \geq c B^2 \sqrt{\frac{s^5}{n^{4/5}} \log^2
  np}\\
\alpha_g^2/\tilde{\sigma} \geq c B^4 \sqrt{\frac{s^5}{n^{4/5}}
  \log^2 2np}.
\end{gather} 
where $\tilde{\sigma} \equiv \max(\sigma, B)$ and $c$ is a constant dependent only on $b, c_1$.

Then, for sufficiently large $n$, with probability at least $1-\frac{1}{n}$:
\begin{align*}
\hat{f}_k \neq 0 \trm{ or } \hat{g}_k \neq 0 &\trm{ for all } k \in S\\
\hat{f}_k = 0 \trm{ and } \hat{g}_k = 0 & \trm{ for all } k \notin S.
\end{align*}


\end{stheorem}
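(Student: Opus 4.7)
The plan is to adapt the primal--dual witness method to the two-stage AC/DC procedure, separating the analysis of false positives from false negatives and handling the AC and DC stages in turn. Write $S = \mathrm{supp}(f^*) \cup \{k : g^*_k \neq 0\}$; by Theorem~\ref{thm:summary_acdc_population}, $S$ coincides with the true relevant set. For the AC stage I would first solve a restricted program in which all coordinates in $S^c$ are forced to zero, producing a primal solution $\{\tilde f_k\}_{k \in S}$ and dual variables that saturate the KKT conditions on $S$. To rule out false positives ($\hat f_k = 0$ for $k \notin S$), I would extend $\tilde f$ by zero and verify that the implied group subgradient for each $k \notin S$ is strictly feasible, i.e.\ the empirical inner product of the residual with any bounded univariate convex direction stays below $\lambda$; Assumption~A1 is what makes this subgradient have population mean zero, and sub-Gaussian noise (A4) controls the stochastic part. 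To rule out false negatives on $S \cap \mathrm{supp}(f^*)$, I would use $\alpha_+$: if any $k \in \mathrm{supp}(f^*)$ were zeroed out, the population risk of $\hat f$ would exceed that of $f^*$ by at least $\alpha_+$, contradicting optimality once empirical deviations are shown to be smaller than $\alpha_+$.

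The passage from population to sample is driven by uniform concentration over classes of uniformly bounded univariate convex functions. Here I would invoke the bracketing entropy bounds of \cite{kim2014global}, which give an $n^{-2/5}$ rate for the univariate convex least-squares problem, take a union bound across the $p$ coordinates that contributes a $\log^2(np)$ factor, and apply a sub-Gaussian tail bound to the noise inner products $\frac{1}{n}\sum_i \varepsilon_i h_k(X_{ki})$ uniformly over convex $h_k$ with $\|h_k\|_\infty \leq B$ (using A3). Combining these yields the stated scaling $\lambda = \Theta(sB\sqrt{\log^2(np)/n})$ and the $s^5/n^{4/5}$ threshold on $\alpha_+$ (equivalently $\alpha_f$).

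The DC stage is then analyzed conditionally on the AC solution. For each $k \in S$, the residual $Y_i - \hat\mu - \sum_{k'\neq k}\hat f_{k'}(X_{k'i})$ is close in $L^2$ to its population analogue $f_0(X) - \mu^* - \sum_{k'\neq k} f^*_{k'}(X_{k'})$ by the AC risk bound, and $g^*_k$ minimizes the expected squared error of a concave fit to this population residual. Another application of the Kim--Samworth bracketing bound, this time for univariate concave functions, combined with the signal floor $\alpha_-$, shows $\hat g_k \neq 0$ whenever $g^*_k \neq 0$ and $\hat g_k = 0$ otherwise. A union bound over $k \in \{1,\ldots,p\}$ adds the remaining $\log p$ factor, and the claim then follows by intersecting the AC and DC success events.

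The main obstacle is the empirical-process control for the AC stage: I need uniform deviation bounds sharp enough to permit $\lambda$ as small as $sB\sqrt{\log^2(np)/n}$ while still dominating the stochastic fluctuations of the group subgradient over all $p$ coordinates, and simultaneously to yield the correct polynomial-in-$s$ dependence in the $\alpha_+$ threshold that matches the $n^{-4/5}$ univariate convex rate. A secondary difficulty is propagating the AC estimation error into the DC residuals so that the threshold comes out as $\alpha_-^2$ rather than $\alpha_-$, which is precisely why the DC condition in the theorem is quadratic in the signal. Finally, Assumption~A1 is essential throughout: without independence of $X_S$ and $X_{S^c}$, neither the KKT feasibility check for $k \notin S$ nor the conditional analysis of the DC residuals would decouple across coordinates.
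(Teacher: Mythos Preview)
Your high-level architecture matches the paper: a primal--dual witness argument with the restricted regression on $S$, separate false-positive and false-negative analyses, bracketing entropy of bounded univariate convex functions for the risk-consistency part, and propagation of the AC error into the DC residuals via Cauchy--Schwarz (which is indeed why the DC threshold is quadratic in $\alpha_-$).

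There is, however, a genuine gap in your false-positive step. You propose to show strict dual feasibility on $S^c$ by controlling ``the empirical inner product of the residual with any bounded univariate convex direction'' via bracketing entropy plus sub-Gaussian noise tails. That supremum over $\mathcal{C}^1_B$ carries the $\epsilon^{-1/2}$ metric entropy and would deliver an $n^{-2/5}$ rate, not the $n^{-1/2}$ rate encoded in $\lambda = \Theta\bigl(sB\sqrt{\log^2(np)/n}\bigr)$. Moreover, the residual $\hat r = f_0 + w - \hat f$ is not sub-Gaussian conditional on $X_S$ (the $f_0-\hat f$ part is merely bounded and data-dependent), so ``A4 controls the stochastic part'' is not the right lever here.

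The paper avoids empirical-process theory entirely for this step. Working in the second-derivative parametrization, the KKT stationarity condition for the $\ell_\infty$ penalty at $d_k=0$ reduces to showing that $\lambda$ dominates the ordered partial sums $\bigl|\tfrac{1}{n}\hat r^{\top}\mathbf{1}_{\pi_k(i:n)}\bigr|$ for every $i$, where $\pi_k$ is the permutation that sorts the $k$-th covariate. Because $\hat r$ depends only on $(X_S,w)$ and $X_k$ is independent of these for $k\in S^c$ (Assumption~A1), the permutation $\pi_k$ is uniformly random given $\hat r$, so each partial sum is a sample \emph{without replacement} from the entries of $\hat r$. Serfling's inequality then gives $n^{-1/2}$ concentration, uniformly over $i$ and $k$ after a union bound, which is exactly what sets the $\lambda$ scaling. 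This reduction---from a supremum over convex functions to ordered partial sums handled by sampling-without-replacement concentration---is the missing idea in your plan.
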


This shows that variable selection consistency is achievable under
exponential scaling of the ambient dimension, $p = O(\exp(n^c))$
for $c<1$, as for linear models. The cost of nonparametric estimation is
reflected in the scaling with respect to $s=|S|$, which can grow only
as $o(n^{4/25})$.

We remark that \citet{dalalyan:12} show that, even with the product distribution,
 under traditional smoothness
constraints, variable selection is achievable only if $n > O(e^s)$. 
Here we demonstrate that convexity yields the scaling $n =
O(\textrm{poly}(s))$.

\section{Additive Faithfulness}
\label{sec:additivefaithful}

For general regression, an additive approximation may result in a
relevant variable being incorrectly marked as irrelevant. Such
mistakes are inherent to the approximation and may persist even in
the population setting.  In this section we give
examples of this phenomenon, and then show how the convexity
assumption
changes the behavior of the additive approximation. We begin
with a lemma that characterizes the components of the additive approximation under mild conditions.


\begin{lemma}
\label{lem:general_int_reduction}
Let $F$ be a distribution on $C=[0,1]^p$ with a positive density
function $p$. Let $f: C \rightarrow \R$ be an integrable function,
and define 
\begin{align*}
f^*_1,...,&f^*_p, \mu^* \coloneqq  \\
&\arg\min \left\{ \E \Bigl( f(X) - \mu - \sum_{k=1}^p f_k(X_k)\Bigr)^2 \,:\,
\E f_k(X_k) = 0,\; \forall k=1,\ldots, p \right\}.
\end{align*}
Then $\mu^* = \E f(X)$, 
\begin{equation}
\label{eq:backfit}
f^*_k(x_k) = \E\Bigl[ f(X) - \sum_{k' \neq k} f^*_{k'}(X_{k'}) \given
x_k\Bigr] - \E f(X), 
\end{equation}
and this solution is unique.
\end{lemma}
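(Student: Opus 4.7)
The plan is to recognize the problem as an $L^2(F)$ projection onto the subspace of mean-zero additive functions plus constants, and then derive the claimed formulas from the first-order stationarity conditions. Uniqueness will then follow from identifiability of additive decompositions under a positive density on the cube.

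First I would apply the standard calculus of variations. Perturbing $\mu$ about $\mu^*$ by $\epsilon$ preserves feasibility (the constraint involves only the $f_k$'s), so stationarity in $\mu$ yields
\[
\E\!\left[f(X) - \mu^* - \sum_k f_k^*(X_k)\right] = 0,
\]
which combined with $\E f_k^*(X_k) = 0$ immediately gives $\mu^* = \E f(X)$. For each $k$, I would perturb $f_k^*$ by $\epsilon h_k$ where $h_k$ is any mean-zero function in $L^2(F_{X_k})$; such perturbations preserve the constraint $\E f_k = 0$, and stationarity yields
\[
\E\!\left[\Bigl(f(X) - \mu^* - \sum_{k'} f_{k'}^*(X_{k'})\Bigr) h_k(X_k)\right] = 0.
\]
Conditioning on $X_k$ and using the tower property, this says that $\E[f(X) - \mu^* - \sum_{k'} f_{k'}^*(X_{k'}) \given X_k]$ is orthogonal in $L^2(F_{X_k})$ to every mean-zero function, hence equals a constant a.s.; this constant must be zero because its unconditional expectation vanishes ($\mu^* = \E f(X)$ and $\E f_{k'}^* = 0$). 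Rearranging produces the backfitting identity \eqref{eq:backfit}.

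For uniqueness, suppose $(\mu^{(1)},\{f_k^{(1)}\})$ and $(\mu^{(2)},\{f_k^{(2)}\})$ are two minimizers. Strict convexity of the squared loss as a functional of the single element $\mu + \sum_k f_k(X_k) \in L^2(F)$ forces these elements to coincide a.s.\ under $F$. Writing $c = \mu^{(1)} - \mu^{(2)}$ and $d_k = f_k^{(1)} - f_k^{(2)}$, we obtain $c + \sum_k d_k(X_k) = 0$ a.s.; taking expectations gives $c=0$, and positivity of the density upgrades $\sum_k d_k(x_k) = 0$ to holding Lebesgue-a.e.\ on $C=[0,1]^p$.

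The last step --- deducing from $\sum_k d_k(x_k) = 0$ Lebesgue-a.e.\ on the cube that each $d_k$ is constant --- is where the one bit of care is needed, and is the main (mild) obstacle. I would argue by induction on $p$: by Fubini, for Lebesgue-a.e.\ pair $a,b \in [0,1]$ the identity $\sum_{k<p} d_k(x_k) = -d_p(a)$ and $\sum_{k<p} d_k(x_k) = -d_p(b)$ each hold a.e.\ in $(x_1,\ldots,x_{p-1})$, forcing $d_p(a) = d_p(b)$, so $d_p$ is constant a.e.; the inductive hypothesis then handles the remaining summands. The mean-zero constraints $\E d_k(X_k) = 0$ finally force each $d_k \equiv 0$, establishing uniqueness.
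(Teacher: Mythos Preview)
Your derivation of $\mu^*$ and the backfitting identity \eqref{eq:backfit} follows the same route as the paper: both amount to first-order stationarity conditions, with the paper minimizing pointwise in $f_k(x_k)$ for each fixed $x_k$ and you phrasing it as $L^2$-orthogonality to mean-zero perturbations. These are equivalent.

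The uniqueness arguments differ, and yours is the more complete one. The paper notes that the objective is strongly convex in $f_k(x_k)$ (second derivative $2p(x_k)>0$) and concludes uniqueness from that; strictly speaking this only shows that each $f_k^*$ is uniquely determined \emph{given} the other components, i.e., that the backfitting equations have a unique pointwise solution coordinate-by-coordinate, which does not immediately rule out multiple joint fixed points. Your argument---strict convexity forces the aggregate $\mu+\sum_k f_k$ to be unique in $L^2(F)$, and then positivity of the density gives identifiability of the additive decomposition---closes this gap cleanly. The Fubini/induction step you sketch is correct; an even shorter variant is to integrate $\sum_k d_k(x_k)=0$ over $x_{-j}$ with respect to Lebesgue measure to see directly that each $d_j$ is a.e.\ constant.
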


Lemma~\ref{lem:general_int_reduction} follows from the stationarity
conditions of the optimal solution.   This result is known, and
criterion \eqref{eq:backfit} is used in the backfitting
algorithm for fitting additive models.   We include 
a proof as our results build on it.

\begin{proof}
  Let $f^*_1,...,f^*_p, \mu^*$ be the minimizers as defined.  We first
  show that the optimal $\mu$ is $\mu^* = \E f(X)$ for any $f_1, ...,
  f_k$ such that $\E f_k(X_k) = 0$. This follows from the stationarity
  condition, which states that $\mu^* = \E[ f(X) - \sum_k f_k(X_k)] =
  \E[ f(X) ]$. Uniqueness is apparent because the second derivative is
  strictly larger than zero and strong convexity is guaranteed.

  We now turn our attention toward the $f^*_k$s.  It must be that
  $f^*_k$ minimizes 
\begin{equation}
\E\Bigl[ \big( f(X) - \mu^* - \sum_{k' \neq k}
  f^*_{k'} (X_{k'}) - f_k (X_k) \big)^2\Bigr]
\end{equation}
subject to $\E f_k(X_k) = 0$.
Fixing $x_k$, we will show that the value 
\begin{equation}
\E[ f(X) - \sum_{k' \neq k}
f_{k'}(X_{k'}) \given x_k] - \mu^*
\end{equation} 
uniquely minimizes
\begin{equation}
\min_{ f_k(x_k) } \int_{\mathbf{x}_{-k}} p(\mathbf{x}) 
         \Big( f(\mathbf{x}) - \sum_{k' \neq k} f^*_{k'} (x_{k'}) - f_k (x_k) -\mu^*\Big)^2 
                 d \mathbf{x}_{-k}.
\end{equation}
The first-order optimality condition gives us
\begin{align}
\int_{\mathbf{x}_{-k}} p(\mathbf{x}) f_k(x_k) d \mathbf{x}_{-k} &= 
  \int_{\mathbf{x}_{-k}} p(\mathbf{x}) 
      ( f(\mathbf{x})-\sum_{k' \neq k} f^*_{k'}(x_{k'})-\mu^*) d \mathbf{x}_{-k} \\  
p(x_k) f_k(x_k) &= \int_{\mathbf{x}_{-k}} p(x_k)
     p(\mathbf{x}_{-k} \given x_k ) 
     ( f(\mathbf{x}) - \sum_{k' \neq k} f^*_{k'} (x_{k'})-\mu^*) 
              d \mathbf{x}_{-k} \\
f_k(x_k) &= \int_{\mathbf{x}_{-k}} 
       p(\mathbf{x}_{-k} \given x_k ) 
     (f(\mathbf{x}) - \sum_{k'\neq k} f_{k'} (x_{k'})  -\mu^*) d \mathbf{x}_{-k} 
 \end{align}
The square error objective is strongly convex,
and the second derivative with respect to $f_k(x_k)$ is $2 p(x_k)$, which is always positive under the assumption that $p$ is positive. Therefore, the solution $f^*_k(x_k) = \E[ f(X) \given x_k ] - \E f(X)$ is unique.
Noting that $\E[ f(X) -\sum_{k'\neq k} f_{k'}(X_{k'}) | x_k] - \E f(X)$ has mean zero as a function of $x_k$
completes the proof.
\end{proof}

In the case that the distribution in
Lemma~\ref{lem:general_int_reduction} is a product distribution, 
the additive components take on a simple form.

\begin{corollary}
\label{cor:product_int_reduction}
Let $F$ be a product distribution on $C=[0,1]^p$ with density function
$p$ which is positive on $C$. 
Let $\mu^*, f^*_k(x_k)$ be defined as in Lemma~\ref{lem:general_int_reduction}.
Then $\mu^* = \E f(X)$ and $f^*_k(x_k) = \E[ f(X) \given x_k] - \E f(X)$ and this solution is unique.
\end{corollary}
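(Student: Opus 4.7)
The plan is to deduce this corollary as an almost immediate consequence of Lemma~\ref{lem:general_int_reduction} by exploiting the independence of coordinates under the product distribution. Lemma~\ref{lem:general_int_reduction} already gives us both the value of $\mu^*$ and the uniqueness of the minimizers, so the only substantive task is to simplify the implicit characterization
\begin{equation*}
f^*_k(x_k) = \E\Bigl[ f(X) - \sum_{k' \neq k} f^*_{k'}(X_{k'}) \,\Big|\, x_k\Bigr] - \E f(X).
\end{equation*}

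First I would note that under a product distribution, $X_k$ is independent of $X_{-k}$, so for any $k' \neq k$ we have
\begin{equation*}
\E\bigl[ f^*_{k'}(X_{k'}) \,\big|\, X_k = x_k \bigr] = \E f^*_{k'}(X_{k'}) = 0,
\end{equation*}
where the last equality uses the mean-zero constraint imposed on each additive component in Lemma~\ref{lem:general_int_reduction}. Substituting this into the implicit formula collapses all the cross terms and yields $f^*_k(x_k) = \E[f(X) \mid x_k] - \E f(X)$, as claimed.

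Next I would verify that this candidate is admissible, i.e., that $\E f^*_k(X_k) = 0$; this is immediate from the tower property, $\E\bigl[\E[f(X) \mid X_k]\bigr] = \E f(X)$. Together with $\mu^* = \E f(X)$, which is recorded in Lemma~\ref{lem:general_int_reduction}, this gives the stated formulas. Uniqueness of the solution is inherited directly from the uniqueness assertion in Lemma~\ref{lem:general_int_reduction}, which in turn rests on strong convexity of the quadratic objective under the assumption that $p$ is positive on $C$.

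There is no genuine obstacle here: the only point requiring any care is ensuring that the independence argument is applied correctly, namely that $f^*_{k'}(X_{k'})$ is a function of $X_{k'}$ alone and hence independent of $X_k$ under the product distribution, allowing the conditional expectation to reduce to an unconditional mean, which then vanishes by construction. Everything else follows from Lemma~\ref{lem:general_int_reduction}.
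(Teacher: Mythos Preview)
Your proposal is correct and matches the paper's intended argument: the paper states the corollary immediately after Lemma~\ref{lem:general_int_reduction} without a separate proof, treating it as an immediate consequence, and your derivation---using independence to reduce $\E[f^*_{k'}(X_{k'}) \mid x_k]$ to $\E f^*_{k'}(X_{k'}) = 0$ and then invoking the lemma's uniqueness---is exactly the natural way to fill in that step.
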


In particular, if $F$ is the uniform distribution,
then $f^*_k(x_k) = \displaystyle\int f(x_k, \mathbf{x}_{-k})
d\mathbf{x}_{-k}$.

\begin{example} Using Corollary~\ref{cor:product_int_reduction}, we
  give two examples of \emph{additively unfaithfulness} under the
  uniform distribution---where relevant variables are
  erroneously marked as irrelevant under an additive
  approximation. First, consider the following function:
\begin{equation}
f(x_1, x_2) = \sin( 2\pi x_1) \sin( 2 \pi x_2)\quad
\trm{(egg carton)} 
\end{equation}
defined for $(x_1, x_2) \in [0,1]^2$.  Then
$\displaystyle\int_{x_2} f(x_1, x_2) d x_2 = 0$ and
$\displaystyle\int_{x_1} f(x_1, x_2) d x_1 = 0$ for each $x_1$ and $x_2$. An additive approximation
would set $f_1 = 0$ and $f_2 = 0$.  Next, consider the function
\begin{equation}
f(x_1, x_2) = x_1 x_2 \quad \trm{(tilting slope)} 
\end{equation}
defined for $x_1 \in [-1,1],\; x_2 \in [0,1]$.  In this case
$\displaystyle\int_{x_1} f(x_1, x_2) d x_1 = 0$ for each $x_2$; therefore, we expect $f_2 = 0$ under the additive approximation. This function, for every fixed $x_2$, is a zero-intercept linear function of $x_1$ with slope $x_2$.
\end{example}

\begin{figure*}[htp]
\vskip-10pt
	\centering
	\subfigure[egg carton]{
		\centering
		{\includegraphics[width=0.4\textwidth]{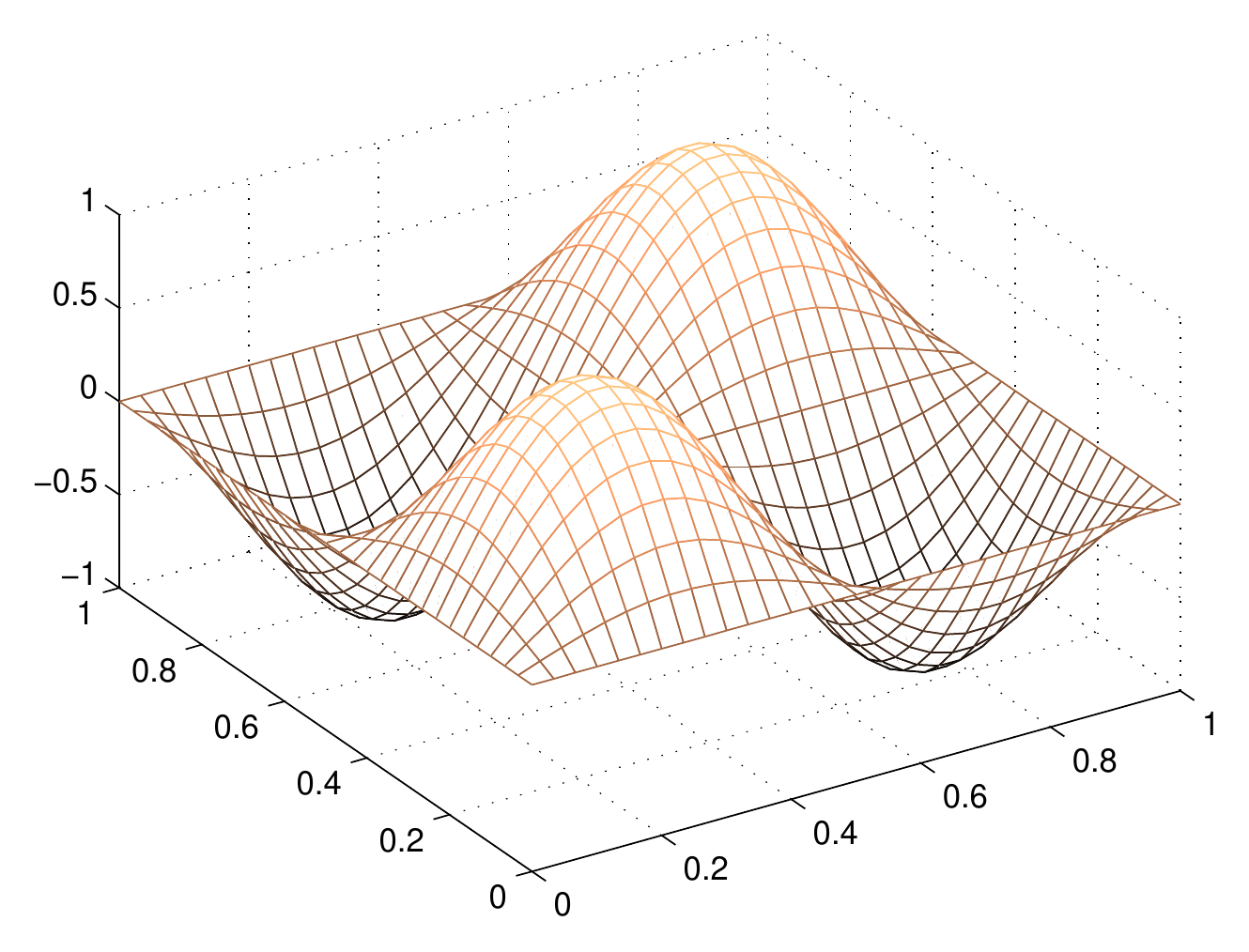}}
	}
	\subfigure[tilting slope]{
		\centering
		{\includegraphics[width=0.4\textwidth]{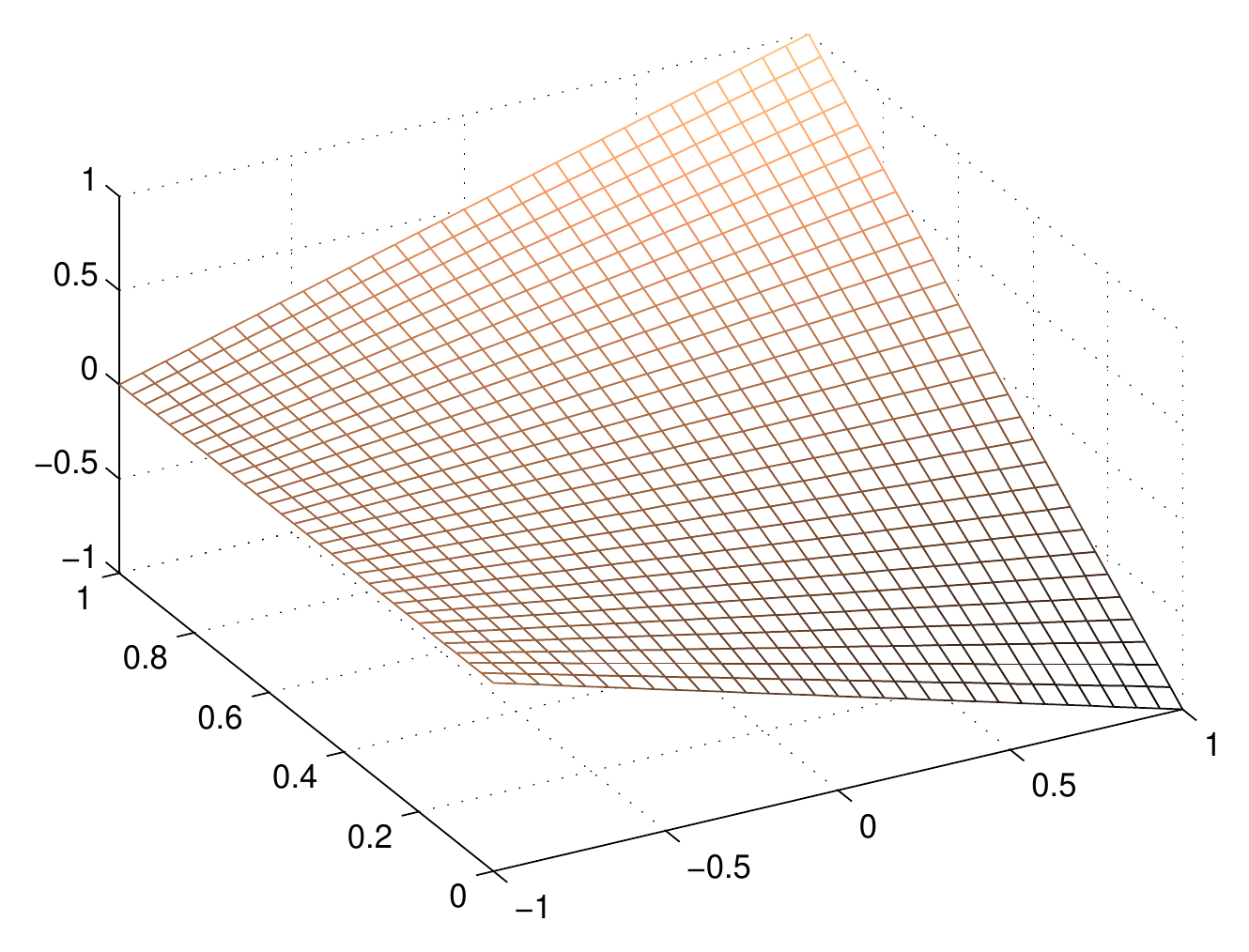}}
	}
\caption{Two additively unfaithful functions. Relevant variables are
  zeroed out under an additive approximation because every ``slice''
  of the function integrates to zero.}
\vskip-10pt
\end{figure*}

In order to exploit additive models in variable selection, it is important to understand when the
additive approximation accurately captures all of the relevant variables.
We call this property \textit{additive faithfulness}. We first formalize the intuitive notion that a multivariate function $f$ \emph{does not depends on} a coordinate $x_k$.

\begin{definition}
Let $C = [0,1]^p$ and let $f: C \rightarrow \R$. We say that $f$ \textit{does not depends on coordinate $k$} if for all $\mathbf{x}_{-k}$, $f(x_k, \mathbf{x}_{-k})$ is a constant as a function of $x_k$. If $f$ is differentiable, then $f$ does not depend on $k$ if $\partial_{x_k} f(x_k, \mathbf{x}_{-k})$ is 0 for all $\mathbf{x}_{-k}$.

In addition, suppose we have a distribution over $C$ and the additive approximation
\begin{equation}
\label{eqn:unconstrained_additive}
f_k^*, \mu^* \coloneqq \argmin_{f_1,\ldots, f_p, \mu} \Bigl\{ 
             \E \Bigl[\Bigl( f(X) - \sum_{k=1}^p f_k(X_k) -\mu \Bigr)^2 \Bigr]
         \,:\, \E f_k(X_k) = 0 \Bigr\}.
\end{equation}
We say that $f$ is \textit{additively faithful} under $F$ if
$f^*_k = 0$ implies that $f$ does not depend on coordinate $k$.
\end{definition}


Additive faithfulness is an attractive property because it implies
that, in the population setting, the additive approximation yields
consistent variable selection.

\subsection{Additive Faithfulness of Convex Functions}

We now show that under a general class of distributions which we
characterize below, convex multivariate functions are additively
faithful.

\begin{definition}
\label{defn:boundary-point}
A density $p(\mathbf{x})$ be a density supported on $[0,1]^p$ satisfies
the \emph{boundary flatness condition} if, for all $j$, and for all $\mathbf{x}_{-j}$:

\begin{equation}
\frac{\partial p(\mathbf{x}_{-j} \given x_j)}{\partial x_j}  =  
\frac{\partial^2 p(\mathbf{x}_{-j} \given x_j)}{\partial x_j^2} = 0
\quad \trm{at } x_k = 0, x_k = 1
\end{equation}

\end{definition}

The boundary flatness condition is a weak condition. For instance, it is
satisfied when the density is flat at the boundary of support---more
precisely, when the joint density satisfies the property that
$\frac{\partial p(x_j,\mathbf{x}_{-j})}{\partial x_j} =
\frac{\partial^2 p(x_j, \mathbf{x}_{-j})}{\partial x_j^2} = 0$ at boundary
points $x_j = 0, x_j=1$. The boundary flatness property is also
trivially satisfied when $p$ is a product density.

The following theorem is the main result of this section.

\begin{theorem}
\label{thm:convex_faithful}
Let $p$ be a positive density supported on $C=[0,1]^p$ that satisfies
the boundary flatness property. 
If $f$ is convex and twice differentiable, then $f$ is additively faithful under $p$.
\end{theorem}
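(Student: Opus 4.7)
The plan is to combine the backfitting characterization of $f^*_k$ with convexity of $f$ and boundary flatness at $x_k \in \{0,1\}$. First I would invoke Lemma~\ref{lem:general_int_reduction} to rewrite the hypothesis $f^*_k \equiv 0$ as
$$h(x_k) \coloneqq \int \tilde f(x_k, \mathbf{x}_{-k})\, p(\mathbf{x}_{-k}\given x_k)\, d\mathbf{x}_{-k} \;\equiv\; \mu^*,$$
where $\tilde f(x_k, \mathbf{x}_{-k}) := f(\mathbf{x}) - \sum_{k'\neq k} f^*_{k'}(x_{k'})$. The key structural observation is that $\partial_{x_k}\tilde f = \partial_{x_k} f$ and $\partial^2_{x_k}\tilde f = \partial^2_{x_k} f$, because the subtracted sum has no $x_k$-dependence; this lets me talk about $f$ even though $h$ was built from $\tilde f$.

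Second, I would differentiate $h$ under the integral sign (justified since $f$, $p$, and their relevant derivatives are continuous on the compact cube) and evaluate at $x_k = 0$ and $x_k = 1$. By the boundary-flatness hypothesis, all product-rule terms carrying a factor $\partial_{x_k} p(\mathbf{x}_{-k}\given x_k)$ or $\partial^2_{x_k} p(\mathbf{x}_{-k}\given x_k)$ drop out at the endpoints. What remains from $h'(0)=h'(1)=0$ and $h''(0)=h''(1)=0$ is, for $x_k\in\{0,1\}$,
$$\int \partial_{x_k} f(x_k, \mathbf{x}_{-k})\, p(\mathbf{x}_{-k}\given x_k)\, d\mathbf{x}_{-k} = 0 \quad\text{and}\quad \int \partial^2_{x_k} f(x_k, \mathbf{x}_{-k})\, p(\mathbf{x}_{-k}\given x_k)\, d\mathbf{x}_{-k} = 0.$$
Because $f$ is convex we have $\partial^2_{x_k} f \geq 0$ pointwise, and because $p(\mathbf{x}_{-k}\given x_k) > 0$, the second identity immediately forces $\partial^2_{x_k} f(0, \mathbf{x}_{-k}) = \partial^2_{x_k} f(1, \mathbf{x}_{-k}) = 0$ for every $\mathbf{x}_{-k}$.

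The main obstacle is bridging from the vanishing of this diagonal Hessian entry on the two endpoint slices to the vanishing of $\partial_{x_k} f$ itself, and this is where I would use positive semidefiniteness of the full Hessian of $f$. For each $j \neq k$, the $2\times 2$ principal-minor inequality gives $(\partial^2_{x_k} f)(\partial^2_{x_j} f) \geq (\partial_{x_j}\partial_{x_k} f)^2$; at $x_k \in \{0,1\}$ the left-hand side vanishes, so $\partial_{x_j}\partial_{x_k} f(x_k, \mathbf{x}_{-k}) = 0$ for all $j \neq k$ and all $\mathbf{x}_{-k}$. Hence $\mathbf{x}_{-k}\mapsto \partial_{x_k} f(0, \mathbf{x}_{-k})$ is a constant $a_0$, and likewise $\partial_{x_k} f(1, \mathbf{x}_{-k}) \equiv a_1$. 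Plugging these constants into the first-derivative identity from the previous paragraph yields $a_0 = a_1 = 0$. Finally, convexity of $f$ in the $x_k$-slice makes $x_k \mapsto \partial_{x_k} f(x_k, \mathbf{x}_{-k})$ nondecreasing on $[0,1]$, so it is sandwiched between $0$ at $x_k=0$ and $0$ at $x_k=1$ and must vanish throughout. Therefore $\partial_{x_k} f \equiv 0$, i.e., $f$ does not depend on $x_k$.
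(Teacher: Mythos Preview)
Your proposal is correct and follows essentially the same route as the paper's proof: invoke the backfitting characterization from Lemma~\ref{lem:general_int_reduction}, differentiate the constant conditional expectation twice under the integral, use boundary flatness to kill the density-derivative terms at $x_k\in\{0,1\}$, use convexity plus positivity of $p$ to force the diagonal Hessian entry to vanish on the boundary slices, then use positive semidefiniteness of the Hessian to propagate this to the off-diagonal entries (the paper cites \citet{HJ90}, Proposition~7.1.10, where you use the equivalent $2\times 2$ principal-minor inequality), conclude $\partial_{x_k} f$ is constant in $\mathbf{x}_{-k}$ and hence zero on each boundary slice, and finish with the monotonicity sandwich.
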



We pause to give some intuition before we presenting the full proof.
Suppose that the underlying distribution has a product density.
Then we know from Lemma~\ref{lem:general_int_reduction} that the
additive approximation zeroes out $k$ when, fixing $x_k$, every
``slice'' of $f$ integrates to zero. We prove
Theorem~\ref{thm:convex_faithful} by showing that ``slices'' of convex
functions that integrate to zero cannot be ``glued together'' while
still maintaining convexity.

\begin{proof}

Fixing $k$ and using the result of
Lemma~\ref{lem:general_int_reduction}, 
we need only show that for all $x_k$, $ \E[ f(X) - \sum_{k'}
f_{k'}(X_{k'}) \given x_k] - \E f(X) = 0 $ 
implies that $f$ does not depend on coordinate $k$, i.e., 
$\partial_{x_k} f(\mathbf{x}) = 0$ for all $\mathbf{x}$.

Let us use the shorthand notation that $r(\mathbf{x}_{-k}) = \sum_{k'
  \neq k} f_{k'}(x_{k'})$ and assume without loss of generality that
$\mu^* = E[f(X)] = 0$. We then assume that for all $x_k$,
\begin{equation}
 \E[ f(X) - r(X_{-k})  \given x_k] \equiv 
 \int_{\mathbf{x}_{-k}}  p(\mathbf{x}_{-k} \given x_k ) 
 \big(f(\mathbf{x}) - r(\mathbf{x}_{-k}) \big) = 0.
\end{equation}
We let $p'(\mathbf{x}_{-k} \given x_k)$ denote 
$\frac{\partial p(\mathbf{\scriptstyle x}_{-k} \given x_k)}{\partial x_k}$ and 
$p''(\mathbf{x}_{-k} \given x_k)$ denote 
$\frac{\partial^2 p(\mathbf{\scriptstyle x}_{-k} \given x_k)}{\partial x_k^2}$ and
likewise for $f'(x_k, \mathbf{x}_{-k})$ and $f''(x_k,
\mathbf{x}_{-k})$. We then differentiate under the integral, valid
because all functions are bounded, and obtain
\begin{gather}
\int_{\mathbf{x}_{-k}} p'(\mathbf{x}_{-k} \given x_k) 
\big( f(\mathbf{x}) - r(\mathbf{x}_{-k}) \big) + 
p(\mathbf{x}_{-k} \given x_k) f'(x_k, \mathbf{x}_{-k}) d \mathbf{x}_{-k}  = 0 
\label{eqn:integral1a} \\
\int_{\mathbf{x}_{-k}} p''(\mathbf{x}_{-k} \given x_k) 
\big( f(\mathbf{x}) - r(\mathbf{x}_{-k}) \big) 
    + 2 p'(\mathbf{x}_{-k} \given x_k) f'(x_k, \mathbf{x}_{-k}) +
p(\mathbf{x}_{-k} \given x_k) f''(x_k, \mathbf{x}_{-k}) d\mathbf{x}_{-k}  = 0 .
\end{gather}

By the boundary flatness condition, we have that $p''(\mathbf{x}_{-k}
\given x_k)$ and $p'(\mathbf{x}_{-k} \given x_k)$ are zero at $x_k =
x_k^0 \equiv 0$. The integral equations then reduce to the following:
\begin{align}
& \int_{\mathbf{x}_{-k}} p(\mathbf{x}_{-k} \given x^0_k) f'(x^0_k, \mathbf{x}_{-k}) d \mathbf{x}_{-k}= 0 \label{eqn:integral1b} \\
& \int_{\mathbf{x}_{-k}} p(\mathbf{x}_{-k} \given x^0_k) f''(x^0_k, \mathbf{x}_{-k}) d\mathbf{x}_{-k} = 0.
\end{align}
Because $f$ is convex, $f(x_k, \mathbf{x}_{-k})$ must be a convex
function of 
$x_k$ for all $\mathbf{x}_{-k}$. Therefore, for all $\mathbf{x}_{-k}$,
$f''(x^0_k, \mathbf{x}_{-k}) \geq 0$. Since $p(\mathbf{x}_{-k} \given
x^0_k) > 0$ by the assumption that $p$ is a positive density, 
we have that $\forall \mathbf{x}_{-k}, f''(x^0_k, \mathbf{x}_{-k}) = 0$ necessarily.

The Hessian of $f$ at $(x^0_k, \mathbf{x}_{-k})$ then has a zero at
the $k$-th main diagonal entry. A positive semidefinite matrix with a
zero on the $k$-th main diagonal entry must have only zeros on the
$k$-th row and column; see proposition 7.1.10 of
\citet{HJ90}.  Thus, at all $\mathbf{x}_{-k}$, the
gradient of $f'(x^0_k, \mathbf{x}_{-k})$ with respect to
$\mathbf{x}_{-k}$ must be zero.
Therefore, $f'(x_k^0, \mathbf{x}_{-k})$ must be constant for all
$\mathbf{x}_{-k}$. By equation~\ref{eqn:integral1b}, we conclude 
that $f'(x_k^0, \mathbf{x}_{-k}) = 0$ for all $\mathbf{x}_{-k}$. We
can use the same reasoning for the case where $x_k = x_k^1$ and deduce
that $f'(x^1_k, \mathbf{x}_{-k}) = 0$ for all $\mathbf{x}_{-k}$. 

Because $f(x_k, \mathbf{x}_{-k})$ as a function of $x_k$ is convex, it must be that, for all $x_k \in (0,1)$ and for all $\mathbf{x}_{-k}$,
\begin{equation}
0 = f'(x_k^0, \mathbf{x}_{-k}) \leq f'(x_k, \mathbf{x}_{-k}) \leq 
    f'(x_k^1, \mathbf{x}_{-k}) = 0
\end{equation}
Therefore $f$ does not depend on $x_k$.


\end{proof}

Theorem~\ref{thm:convex_faithful} plays an important role in our
finite sample analysis, where we show that the additive
approximation is variable selection consistent (or ``sparsistent''), even when the true function is not
additive.

\begin{remark}
  We assume twice differentiability in
  Theorems~\ref{thm:convex_faithful} to simplify the proof.  We
  expect, however, that this this smoothness condition is not
  necessary---every convex function can be approximated arbitrarily
  well by a smooth convex function.
\end{remark}

\begin{remark} 
  We have not found natural conditions under which the opposite
  direction of additive faithfulness holds---conditions implying that if $f$ does not
  depend on coordinate $k$, then $f_k^*$ will be zero in the additive
  approximation.  Suppose, for example, that $f$ is only a
  function of $X_1, X_2$, and that $(X_1, X_2, X_3)$ follows a
  degenerate 3-dimensional distribution where $X_3 = f(X_1, X_2) -
  f^*(X_1) - f^*_2(X_2)$.  In this case $X_3$ exactly captures the
  additive approximation error.  The best additive
  approximation of $f$ would have a component $f^*_3(x_3) = x_3$ even
  though $f$ does not depend on $x_3$.
\end{remark}

\begin{remark}
In Theorem~\ref{thm:convex_faithful}, we do not assume a parametric form for the additive components. The additive approximations may not be faithful if we use parametric components. For example, suppose we approximate a convex function $f(X)$ by a linear form $X \beta$. The optimal linear function in the population setting is $\beta^* = \Sigma^{-1} \trm{Cov}(X, f(X))$. Suppose the $X$'s are independent and follow a symmetric distribution and suppose $f(\mathbf{x}) = x_1^2 - \E[X_1^2]$, then $\beta^*_1 = \E[ X_1 f(X)] = \E[ X_1^3 - X_1 \E[X_1^2]] = 0$.
\end{remark}

\begin{remark}
It is possible to get a similar result for distributions with
unbounded support, by using a limit condition $\lim_{|x_k| \rightarrow
  \infty} \frac{\partial p(\mathbf{\scriptstyle x}_{-k} \given x_k)}{\partial x_k}
= 0$.  Such a limit condition however is not obeyed by many common distributions such as the multivariate Gaussian distribution. The next example shows that certain convex functions are not additive faithful under certain multivariate Gaussian distributions.
\end{remark}

\begin{example}
\label{examp:gaussian_counterexample}
Consider a two dimensional quadratic function $f( \mathbf{x}) = \mathbf{x}^\tran H \mathbf{x} + c$ where $H = \begin{pmatrix} H_{11} & H_{12} \\ H_{12} & H_{22}\end{pmatrix}$ is positive definite and a Gaussian distribution $X \sim N(0, \Sigma)$ where $\Sigma = \begin{pmatrix}1 & \alpha \\ \alpha & 1 \end{pmatrix}$.
As we show in Section~\ref{sec:gaussian_example} of the Appendix, the additive approximation has the
following closed form.
\begin{align*}
f^*_1(x_1) &= \left( \frac{T_1 - T_2 \alpha^2}{1 - \alpha^4} \right) x_1^2 + c_1\\
f^*_2(x_2) &= \left( \frac{T_2 - T_1 \alpha^2}{1 - \alpha^4} \right) x_2^2 + c_2
\end{align*}
Where $T_1 = H_{11} + 2H_{12} \alpha + H_{22} \alpha^2$, $T_2 = H_{22} + 2H_{12} \alpha + H_{11} \alpha^2$, $c_1, c_2$ are constants such that $f^*_1$ and $f^*_2$ both have mean zero. Let $H = \begin{pmatrix} 1.6 & 2 \\ 2 & 5\end{pmatrix}$, then it is easy to check that if $\alpha = - \frac{1}{2}$, then $f^*_1 = 0$
and additive faithfulness is violated, if $\alpha > \frac{1}{2}$, then $f^*_1$ is a concave function. We take the setting where $\alpha=-0.5$, compute the optimal additive functions via numerical simulation, and show the results in Figure~\ref{fig:gaussian_example}--$f^*_1$ is zero as expected.
\end{example}

Although the Gaussian distribution does not satisfy the boundary flatness condition, it is possible to approximate the Gaussian distribution arbitrarily well with distributions that do satisfy the boundary flatness conditions.

\begin{example} 
\label{examp:boundaryflat_example}
Let $\Sigma$ be as in Example~\ref{examp:gaussian_counterexample} with $\alpha = -0.5$ so that $f^*_1 = 0$. Consider a mixture $\lambda U[-(b+\epsilon), b+\epsilon]^2 + (1-\lambda) N_b(0, \Sigma)$ where $N_b(0,\Sigma)$ is the density of a \emph{truncated} bivariate Gaussian bounded in $[-b, b]^2$ and $U[-(b+\epsilon), b+\epsilon]^2$ is the uniform distribution over a square. The uniform distribution is supported over a slightly larger square to satisfy the boundary flatness conditions.

When $b$ is large, $\epsilon$ is small, and $\lambda$ is small, the mixture closely approximates the Gaussian distribution but is still additively faithful for convex functions. Figure~\ref{fig:boundaryflat_example} shows the optimal additive components under the mixture distribution, computed by numerical integration with $b=5, \epsilon=0.3, \lambda=0.0001$. True to our theory, $f^*_1$, which is zero under the Gaussian distribution, is nonzero under the mixture approximation to the Gaussian distribution. We note that the magnitude $\E f^*_1(X_1)^2$, although non-zero, is very small, consistent with the fact that the mixture distribution closely approximates the Gaussian distribution.
\end{example}

\begin{figure*}[htp]
\vskip-10pt
	\centering
	\subfigure[Gaussian distribution]{
          \label{fig:gaussian_example}
		\centering
		{\includegraphics[width=0.4\textwidth]{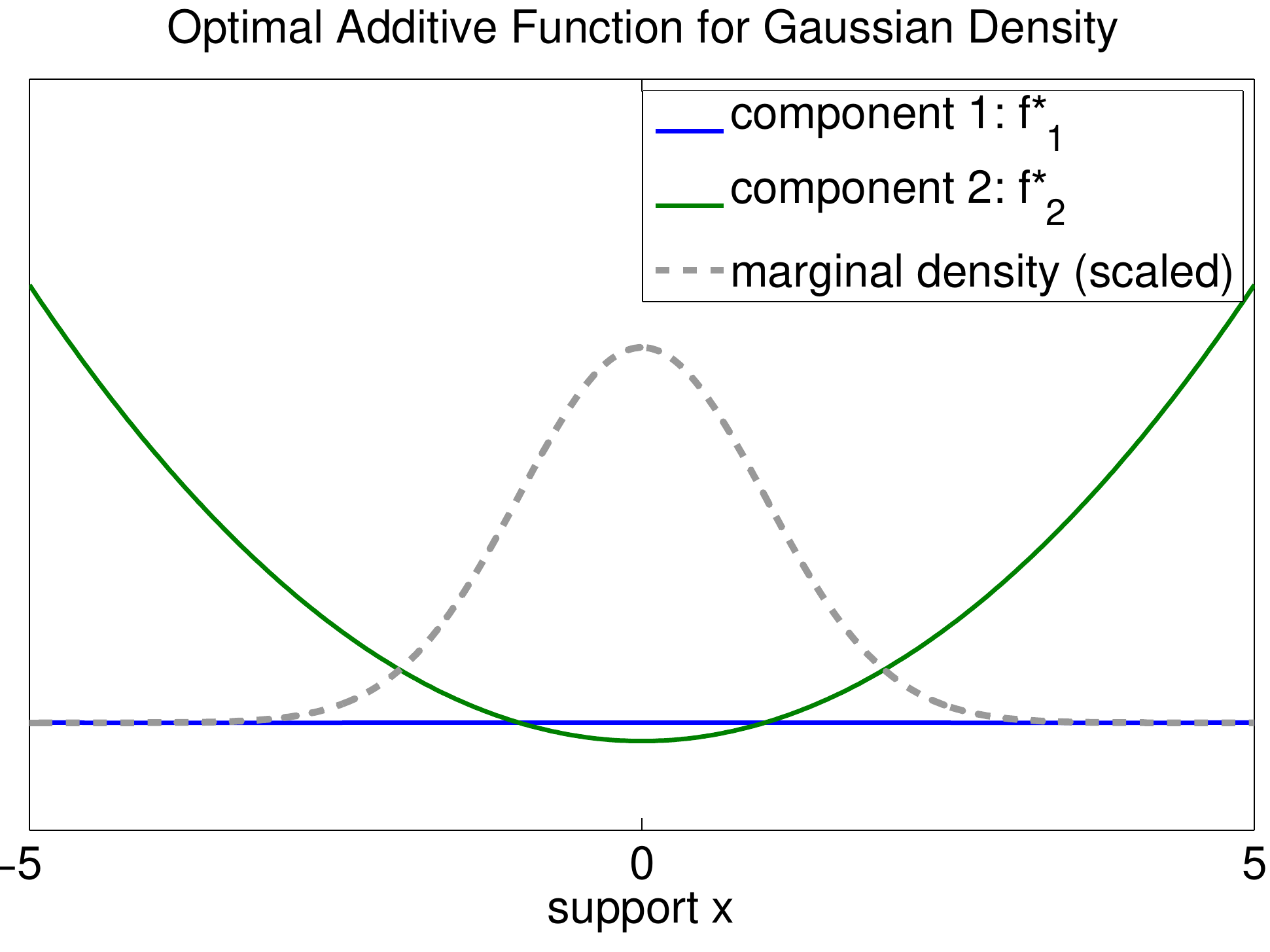}}
	}
	\subfigure[Mixture approximation]{
          \label{fig:boundaryflat_example}
		\centering
		{\includegraphics[width=0.4\textwidth]{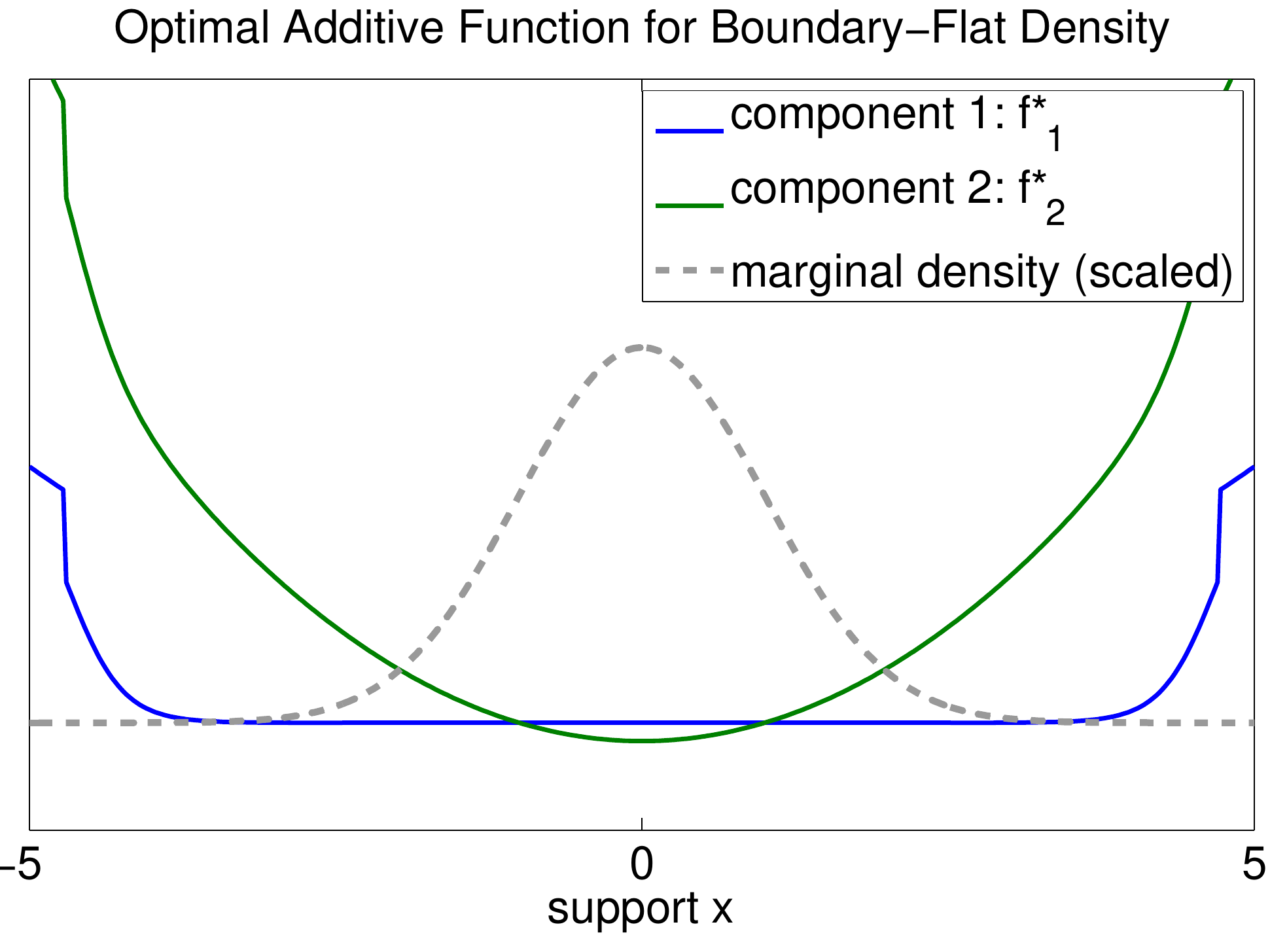}}
	}
\caption{Optimal additive projection of the quadratic function described in Example~\ref{examp:gaussian_counterexample} under both the Gaussian distribution described in Example~\ref{examp:gaussian_counterexample} and under the approximately Gaussian mixture distribution described in Example~\ref{examp:boundaryflat_example}. For the mixture approximation, we used $b=5, \epsilon=0.3, \lambda=0.0001$ where the parameters are defined in Example~\ref{examp:boundaryflat_example}. This example shows the effect and the importance of the boundary flatness conditions.}
\vskip-10pt
\end{figure*}

\subsection{Convex Additive Models}

Although convex functions are additively faithful---under appropriate conditions---it is difficult to
estimate the optimal additive functions $f^*_k$s as defined in
equation~\eqref{eqn:unconstrained_additive}.  The reason is that $f^*_k$ need not be
a convex function, as example~\ref{examp:gaussian_counterexample}
shows. It may be possible to estimate $f^*_k$ via smoothing, but we prefer an approach that is free of smoothing parameters. 
Since the true regression function $f$ is convex, we approximate the additive model with a \emph{convex} additive model. We abuse notation and, for the rest of the paper, use the notation $f^*_k$ to represent convex additive fits:
\begin{equation}
\label{eqn:convex_additive}
\{ f^*_k \}_{k=1}^p = \arg\min \Big \{ 
    \E\Bigl( f(X) - \sum_{k=1}^p f_k(X_k) \Bigr)^2 \,:\, f_k \in \mathcal{C}^1, \, \E f_k(X_k) = 0 \Big \}
\end{equation}
where $\mathcal{C}^1$ is the set of univariate convex functions. 
The convex functions $\{f^*_k\}$ are not additively faithful by
themselves, i.e., it could be that the true function $f$ depends on variable $k$ but $f^*_k = 0$. However, faithfulness can be restored by coupling the $f^*_k$'s with a set of univariate concave fits on the \emph{residual} $f - f^*$:
\begin{equation}
\label{eqn:concave_postprocess}
g^*_k = \arg\min \Big\{
   \E\Bigl( f(X) - \sum_{k' \neq k} f^*_{k'}(X_{k'}) - g_k \Bigr)^2
    \,:\, g_k \in \mh \mathcal{C}^1, \E g_k(X_k) = 0 
  \Big\}.
\end{equation}

\begin{theorem}
\label{thm:acdc_faithful}
Suppose $p(\mathbf{x})$ is a positive density on $C=[0,1]^p$ that
satisfies the boundary flatness condition. Suppose that $f$ is convex
and twice differentiable, and that $\partial_{x_k} p( \mathbf{x}_{-k} \given x_k )$, and
$\partial_{x_k}^2 p( \mathbf{x}_{-k} \given x_k)$ are all continuous
as functions of $x_k$.  Let $f^*_k$ and $g^*_k$ be as defined in equations
\eqref{eqn:convex_additive} and \eqref{eqn:concave_postprocess}, then the $f^*_k$'s and the $g^*_k$'s are unique. Furthermore, 
$f^*_k = 0$ and $g^*_k = 0$ implies that $\partial_{x_k} f(\mathbf{x})
= 0$, that is, $f$ does not depend on~$x_k$. 
\end{theorem}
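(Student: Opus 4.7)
The plan is to reduce the faithfulness claim to Theorem~\ref{thm:convex_faithful} by showing that the two-sided optimality conditions on $f^*_k$ and $g^*_k$ together annihilate the entire conditional residual, not merely its convex and concave parts separately. Uniqueness of the $f^*_k$'s and $g^*_k$'s is a preliminary observation: the quadratic objective is strictly convex on $L^2(p)$, and positivity of $p$ combined with the mean-zero constraints makes the additive decomposition identifiable (if $\sum_k h_k(x_k) \equiv 0$ with each $\E h_k(X_k) = 0$, then separating variables forces each $h_k$ to be constant and hence zero), so $L^2(p)$-projection onto either of the closed convex cones of mean-zero convex (respectively, mean-zero concave) functions picks out unique components.

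For the main claim, define
\[
h_k(x_k) \;:=\; \E\bigl[\,f(X) - \sum_{k'\neq k} f^*_{k'}(X_{k'}) \given x_k\,\bigr] - \mu^*.
\]
First I would write the first-order condition for $f^*_k = 0$ to be the minimizer over the cone of mean-zero convex functions: the directional derivative of the objective at $0$ must be non-negative along every admissible direction $\phi$, which a short calculation reduces to $\int_0^1 h_k(x)\,\phi(x)\,p(x)\,dx \le 0$ for every mean-zero convex $\phi$. The analogous condition for $g^*_k = 0$ gives the same integral $\le 0$ for every mean-zero concave test function, and negation swaps the two cones, so in fact the integral is $\ge 0$ for every mean-zero convex $\phi$. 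Combining and using $\E h_k(X_k) = 0$ to drop the mean-zero restriction yields $\int_0^1 h_k(x)\,\phi(x)\,p(x)\,dx = 0$ for every convex $\phi$ on $[0,1]$.

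Next I would force $h_k \equiv 0$ by plugging in the one-parameter family of convex test functions $\phi_t(x) = (x-t)_+$. The orthogonality becomes $\int_t^1 h_k(x)(x-t)p(x)\,dx = 0$ for every $t \in (0,1)$; the integrand is $C^2$ in $t$ after the truncation, so two differentiations under the integral give $h_k(t)\,p(t) = 0$. Positivity of $p$ and continuity of $h_k$---which follows from the assumed continuity of $\partial_{x_k} p$ in $x_k$ together with boundedness of $f$ and the $f^*_{k'}$---then force $h_k \equiv 0$.

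Finally I would invoke Theorem~\ref{thm:convex_faithful}. The identity $h_k \equiv 0$ means $\E[f(X) - r(X_{-k}) \given x_k] \equiv \mu^*$, where $r(\mathbf{x}_{-k}) := \sum_{k'\neq k} f^*_{k'}(x_{k'})$ depends only on $\mathbf{x}_{-k}$; this is structurally identical to the starting hypothesis in the proof of Theorem~\ref{thm:convex_faithful}. Differentiating the identity once and twice in $x_k$, applying the boundary flatness condition to annihilate the $\partial_{x_k} p$ and $\partial_{x_k}^2 p$ contributions at $x_k \in \{0,1\}$, and then using convexity of $f$ in $x_k$ together with the positive-semidefinite Hessian argument (Proposition~7.1.10 of \citet{HJ90}) gives $\partial_{x_k} f \equiv 0$ verbatim. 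I expect the main obstacle to be the KKT step: converting the two one-sided inequalities coming from the convex and concave projections into a genuine two-sided orthogonality against every convex test function---this is precisely the point at which the decoupled DC stage earns its keep, and why a convex additive fit alone would not suffice for faithfulness.
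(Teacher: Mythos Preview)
Your proof is correct and follows a genuinely different route from the paper's. Both arguments reduce to showing that the conditional residual $h_k$ (the paper writes $h^*_k$) vanishes identically and then invoke the extension of Theorem~\ref{thm:convex_faithful} (the paper packages this as Lemma~\ref{cor:faithfulness_extension}); the difference lies in how $h_k\equiv 0$ is obtained. The paper works with two specific one-parameter scalar families: first $c\,(x_k^2-m_k^2)$, which is convex or concave according to the sign of $c$, to deduce $\E[h^*_k(X_k)\,X_k^2]=0$; then $c\,\bigl(h^*_k(x_k)+\alpha(x_k^2-m_k^2)\bigr)$, after invoking an auxiliary regularity result (Lemma~\ref{lem:acdc_derivative_bound}) that $h^*_k$ is $C^2$ with second derivative bounded below, so that a large quadratic shift makes it convex---optimality at $c=0$ then gives $\E h^*_k(X_k)^2=0$. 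You instead pass through the full variational inequalities for the two cone projections, combine the one-sided inequalities into orthogonality of $h_k$ against \emph{every} convex test function, and then test against the hinge family $(x-t)_+$; differentiating $\int_t^1 h_k p_k=0$ once (Lebesgue differentiation suffices, so continuity of $h_k$ is not even needed) gives $h_k p_k=0$ a.e.\ directly. Your argument is shorter and sidesteps the second-derivative lemma entirely; the paper's argument trades that lemma for the convenience of working only with scalar optimality conditions rather than the full KKT cone inequality. Your remark that the combined AC and DC first-order conditions produce two-sided orthogonality to the convex cone is exactly the mechanism that makes the DC stage necessary, and it is nice to see it isolated so cleanly.
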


Before we can prove the theorem, we need a lemma that generalizes Theorem~\ref{thm:convex_faithful}.

\begin{lemma}
\label{cor:faithfulness_extension}
Suppose $p(x)$ is a positive density on $C=[0,1]^p$ satisfying the boundary flatness condition.
Let $f(\mathbf{x})$ be a convex twice differentiable function on $C$. Let $\phi(\mathbf{x}_{-k})$ be any function that does not depend on $x_k$. Then, we have that the unconstrained univariate function
 \begin{align}
h^*_k = \arg\min_{f_k} \mathbb{E} \Bigl[\bigl( f(X) 
           - \phi(X_{-k}) - h_k(X_k) \bigr)^2\Bigr]
\end{align}
is given by $h^*_k(x_k) = \mathbb{E}\bigl[ f(X) - \phi(X_{-k}) \,|\, x_k\bigr]$,
and $ h^*_k = 0$ implies that $\partial_{x_k} f(\mathbf{x}) = 0$.
\end{lemma}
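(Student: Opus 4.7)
The plan is to adapt the proof of Theorem~\ref{thm:convex_faithful} almost verbatim, treating $\phi(\mathbf{x}_{-k})$ in exactly the way $r(\mathbf{x}_{-k}) = \sum_{k' \neq k} f^*_{k'}(x_{k'})$ was treated there. The only property of $r$ used in that proof is that $\partial_{x_k} r \equiv 0$, which clearly also holds for $\phi$.

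First I verify the closed-form expression for $h^*_k$. Since the problem is unconstrained and decouples pointwise in $x_k$, fixing $x_k$ reduces to minimizing
\[
\int p(\mathbf{x}_{-k} \mid x_k)\bigl(f(\mathbf{x}) - \phi(\mathbf{x}_{-k}) - h_k(x_k)\bigr)^2 d\mathbf{x}_{-k}
\]
over the scalar $h_k(x_k)$. The first-order condition, combined with strict positivity of $p(x_k)$ (which makes the second derivative of the objective strictly positive), yields $h^*_k(x_k) = \E[f(X) - \phi(X_{-k}) \mid x_k]$ as the unique minimizer.

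For the faithfulness claim, I assume $h^*_k \equiv 0$ and rewrite this as
\[
\int p(\mathbf{x}_{-k} \mid x_k)\bigl(f(\mathbf{x}) - \phi(\mathbf{x}_{-k})\bigr) d\mathbf{x}_{-k} = 0 \quad \text{for all } x_k \in [0,1].
\]
Differentiating once and twice in $x_k$ under the integral sign (justified by boundedness and continuity of the integrands on the compact domain), and using $\partial_{x_k}\phi \equiv 0$, I obtain the same two identities as in the proof of Theorem~\ref{thm:convex_faithful}. Evaluating at the boundary points $x_k^0 = 0$ and $x_k^1 = 1$, boundary flatness kills the terms involving $p'(\mathbf{x}_{-k} \mid x_k)$ and $p''(\mathbf{x}_{-k} \mid x_k)$, leaving
\[
\int p(\mathbf{x}_{-k} \mid x_k^0)\, \partial_{x_k} f(x_k^0, \mathbf{x}_{-k}) d\mathbf{x}_{-k} = 0, \qquad \int p(\mathbf{x}_{-k} \mid x_k^0)\, \partial_{x_k}^2 f(x_k^0, \mathbf{x}_{-k}) d\mathbf{x}_{-k} = 0,
\]
and similarly at $x_k^1$. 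Convexity of $f$ in $x_k$ gives $\partial_{x_k}^2 f(x_k^0, \mathbf{x}_{-k}) \geq 0$, and positivity of $p(\mathbf{x}_{-k} \mid x_k^0)$ then forces $\partial_{x_k}^2 f(x_k^0, \cdot) \equiv 0$. The zero-diagonal PSD property of the Hessian (Horn--Johnson, prop.~7.1.10) shows that $\partial_{x_k} f(x_k^0, \mathbf{x}_{-k})$ is constant in $\mathbf{x}_{-k}$, and the first identity pins that constant at $0$. The same reasoning at $x_k^1$ yields $\partial_{x_k} f(x_k^1, \cdot) \equiv 0$, and convexity of $f$ in $x_k$ then sandwiches $\partial_{x_k} f(x_k, \mathbf{x}_{-k})$ between $0$ and $0$ on $(0,1)$, giving the claim.

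The main obstacle is really no obstacle at all: the proof of Theorem~\ref{thm:convex_faithful} never uses the additive structure of $r$, only its independence from $x_k$. Once this is recognized, the argument transfers immediately. This is in fact the point of stating the result at this level of generality---it packages the ``boundary-flatness annihilation'' step so that it can be reused in the proof of Theorem~\ref{thm:acdc_faithful}, where $\phi$ will play the role of $\sum_{k' \neq k} f^*_{k'}$ combined with the optimal intercept.
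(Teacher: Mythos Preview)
Your proposal is correct and matches the paper's own proof essentially verbatim: the paper simply observes that the only property of $r(\mathbf{x}_{-k})$ used in the proof of Theorem~\ref{thm:convex_faithful} is $\partial_{x_k} r = 0$, and that replacing $r$ by $\phi$ therefore carries the argument through unchanged. Your write-up is in fact more explicit than the paper's, which compresses the whole thing into two sentences.
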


\begin{proof}
  In the proof of Theorem~\ref{thm:convex_faithful}, the only property
  of $r(\mathbf{x}_{-k})$ we used was the fact that $\partial_{x_k}
  r(\mathbf{x}_{-k}) = 0$. Therefore, the proof here is identical to
  that of Theorem~\ref{thm:convex_faithful} except that we replace $r(\mathbf{x}_{-k})$ with $\phi(\mathbf{x}_{-k})$.
\end{proof}

\begin{proof}[Proof of theorem~\ref{thm:acdc_faithful}]

Fix $k$. Let $f^*_k$ and $g^*_k$ be defined as in
equation~\ref{eqn:convex_additive} and
equation~\ref{eqn:concave_postprocess}. Let $\phi(\mathbf{x}_{-k}) \equiv \sum_{k' \neq k} f^*_{k'}(x_{k'})$.

Then we have that
\begin{align}
f^*_k &= \arg\min_{f_k} \Big\{
   \E\big( f(X) - \sum_{k' \neq k} f^*_{k'}(X_{k'}) - f_k \big)^2 
    \,:\, f_k \in  \mathcal{C}^1,\, \E f_k(X_k) = 0 
  \Big\} \\
g^*_k &= \arg\min_{g_k} \Big\{
   \E\big( f(X) - \sum_{k' \neq k} f^*_{k'}(X_{k'}) - g_k \big)^2 
    \,:\, g_k \in \mh \mathcal{C}^1,\, \E g_k(X_k) = 0 
  \Big\}
\end{align}

Let us suppose that $f^*_k = g^*_k = 0$. It must be then that
\begin{align*}
\argmin_{c \in \R} \E \left( f(X) - \phi(X_{-k}) - c (X_k^2 - m_k^2) \right)^2 = 0
\end{align*}
where $m_k^2 \equiv \E X_k^2$; this is because $c(x_k^2 - m_k^2)$ is either convex or concave in $x_k$ and it is centered, i.e. $\E[ X_k^2 - m_k^2] = 0$. Since the optimum has a closed form $c^* = \frac{\E\big[(f(X) -\phi(X_{-k}))(X_k^2 - m^2_k)\big]}{\E X_k^2}$, we deduce that 
\begin{align*}
\E \big[ (f(X) - \phi(X_{-k})) &(X_k^2 - m_k^2) \big] \\
   &= \E[ (f(X) - \phi(X_{-k})) X^2_k] = 
 \E[ \E[ f(X) - \phi(X_{-k}) \given X_k] X^2_k] = 0
\end{align*}

We denote $h^*_k(x_k) = \E[ f(X) - \phi(X_{-k}) \given x_k]$.

Under the derivative continuity conditions in the theorem, we apply Lemma~\ref{lem:acdc_derivative_bound} in the appendix and know that $h^*_k(x_k)$ is twice-differentiable and has a second derivative bounded away from $-\infty$. Therefore, for a large enough positive scalar $\alpha$, $h^*_k(x_k) + \alpha(x_k^2 - m_k^2)$ has a non-negative second derivative and is thus convex.

Because we assumed $f^* = g^* = 0$, it must be that
\[
\argmin_{c \in \R} 
\E\Big( f(X) - \phi(X_{-k}) - c\big( h^*_k(X_k) + \alpha(X_k^2 - m_k^2)\big) \Big)^2 = 0
\]
This is because $c\big( h^*_k(x_k) + \alpha(x_k^2 - m_k^2) \big)$ is convex for $c \geq 0$ and concave for $c \leq 0$ and it is a centered function.

Again, $c^* = \frac{\mathbb{E}[(f(X)-\phi(X_{-k}))\big( 
           h^*_k(X_k) + \alpha (X_k^2 - m_k^2) \big)]}{\mathbb{E}
       \big( h^*_k(X_k) + \alpha (X_k^2 - m_k^2) \big)^2} = 0$, so
\begin{align*}
\mathbb{E}[ (f(X)-\phi(X_{-k})) \big( h^*_k(X_k) + \alpha (X_k^2-m_k^2) \big) ] &= 
\mathbb{E}[ (f(X) - \phi(X_{-k})) h^*_k(X_k) ] \\
& = \mathbb{E}\Big[ \mathbb{E}[ f(X) - \phi(X_{-k}) \given X_k]  h^*_k(X_k) \Big] \\
& = \mathbb{E} h^*_k(X_k)^2 = 0
\end{align*}
where the first equality follows because $\E[ (f(X) - \phi(X_{-k})) (X_k^2 - m_k^2)] = 0$. Therefore, we get that $h^*_k = 0$. Now we use
Lemma~\ref{cor:faithfulness_extension} with $\phi(\mathbf{x}_{-k}) =
f(\mathbf{x}) - \sum_{k' \neq k} f^*_{k'} (x_{k'})$ and conclude that
$f^*_k = 0$ and $g^*_k = 0$ together imply that $f$ does not depend on $x_k$.\\

Now we turn to uniqueness. Suppose for sake of contradiction that $f^*$ and $f'$ are optimal solutions to (\ref{eqn:convex_additive}) and $\E (f'-f^*)^2 > 0$.  $f^* + \lambda ( f' - f^*)$ for any $\lambda \in [0,1]$ must then also be an optimal solution by convexity of the objective and constraint. However, the second derivative of the objective $\E( f - f^* - \lambda(f' - f^*))^2$ with respect to $\lambda$ is $2 \E( f' - f^*)^2 > 0$. The objective is thus strongly convex and $f^*, f'$ cannot both be optimal. The uniqueness of $g^*$ is proved similarly.
\end{proof}

\def\C{\mathcal{C}}

\subsection{Estimation Procedure}
\label{sec:acdc}

Theorem~\ref{thm:acdc_faithful} naturally suggests 
a two-stage screening procedure for variable selection in the population setting. In the first stage, we fit a convex additive model. 
\begin{equation}
\label{eqn:scam2_pop}
f^*_1, ..., f^*_p = \argmin_{f_1,...,f_p \in \C^1_0} 
   \E \Big( f(X)  - \sum_{k=1}^p f_k(X_k) \Big)^2 
\end{equation}
where we denote $\C^1_0$ ($\mh{}\C^1_0$) as the set of one-dimensional convex (resp. concave) functions with population mean zero. In the second stage, for every variable marked as irrelevant in the first stage, we fit a univariate \emph{concave} function separately on the residual for that variable.
 for each $k$ such that $ f^*_k = 0$:
\begin{equation}
\label{eqn:dc2}
g^*_k = \argmin_{g_k \in \mh{}\C^1_0} 
   \E \Big( f(X) - \sum_{k'} f^*_{k'}(X_{k'}) 
    - g_k(X_{k})\Big)^2 
\end{equation}
We screen out $S^C$, any variable $k$ that is zero after the second stage, and output $S$.
\begin{equation}
\label{eqn:acdc_vars_pop}
S^c = \bigl\{k : f^*_k =
0 \; \mathrm{and}\; g^*_k =0\bigr\}.
\end{equation}

We refer to this procedure as AC/DC (additive
convex/decoupled concave). Theorem~\ref{thm:acdc_faithful} guarantees that the true set of relevant variables $S_0$ must be a subset of $S$.


It is straightforward to construct a finite sample variable screening procedure, which we describe in Figure~\ref{fig:backfitting:algo}.
We use an $\ell_\infty/\ell_1$ penalty in equation~\eqref{eqn:scam2}
and an $\ell_\infty$ penalty in equation~\eqref{eqn:dc2} to encourage
sparsity.  Other penalties can also produce
sparse estimates, such as a penalty on the derivative of each of the
component functions.  The $\|\cdot\|_\infty$ norm is convenient for both
theoretical analysis and implementation.

The optimization in \eqref{eqn:scam2} appears to be infinite
dimensional, but it is equivalent to a finite dimensional quadratic
program.  In the following section, we give the details
of this optimization, and show how it can be reformulated
to be more computationally efficient.


\begin{figure}[t]
{\sc AC/DC Algorithm for Variable Selection in Convex Regression\hfill}
\vskip5pt
\begin{center}
\hrule
\vskip7pt
\normalsize
\begin{enumerate}
\item[] \textit{Input}:  $(\mathbf{x}_1, y_1), ..., (\mathbf{x}_n, y_n)$, regularization parameter $\lambda$.
\vskip5pt
\item[] \textit{AC Stage}:  Estimate a sparse additive convex model:
\begin{equation}
\label{eqn:scam2}
\hat{f}_1, ..., \hat{f}_p, \hat\mu = \argmin_{f_1,...,f_p \in \C^1_0} 
   \frac{1}{n} \sum_{i=1}^n \Big(y_i - \mu-\sum_{k=1}^p f_k(x_{ik}) \Big)^2 
       + \lambda \sum_{k=1}^p \| f_k \|_\infty
\end{equation}
\vskip5pt
\item[] \textit{DC Stage}:  Estimate concave functions
 for each $k$ such that $\| \hat{f}_k \|_\infty = 0$:
\begin{equation}
\label{eqn:dc22}
\hat{g}_k = \argmin_{g_k \in \mh{}\C^1_0} 
   \frac{1}{n} \sum_{i=1}^n \Big( y_i - \hat \mu - \sum_{k'} \hat{f}_{k'}(x_{ik'}) 
    - g_k(x_{ik})\Big)^2 
      + \lambda \| g_k \|_\infty
\end{equation}
\item[] \textit{Output}: Component functions $\{\hat f_k\}$ and 
relevant variables $\hat S$ where
\begin{equation}
\hat S^c = \bigl\{k : \| \hat{f}_k \| =
0 \; \mathrm{and}\; \|\hat{g}_k \|=0\bigr\}.
\end{equation}
\end{enumerate}
\vskip3pt
\hrule
\end{center}
\vskip0pt
\caption{The AC/DC algorithm for variable selection in convex
  regression.  The AC stage fits a sparse additive convex regression
  model, using a quadratic program that imposes an group sparsity
  penalty for each component function.  The DC stage fits
  decoupled concave functions on the residuals, for each 
  component that is zeroed out in the AC stage.}
\label{fig:backfitting:algo}
\end{figure}

 

\def\uds#1{#1}
\def\perm#1{\pi_k(#1)}

\section{Optimization}
\label{sec:optimization}

We now describe in detail the optimization algorithm for the additive
convex regression stage.  The second decoupled concave regression stage
follows a very similar procedure.

Let $\bds{x}_{i}\in\mathbb{R}^{p}$ be the covariate, let $y_{i}$ be
the response and let $\epsilon_{i}$ be the mean zero noise. The
regression function $f(\cdot)$ we estimate is the sum of
univariate functions $f_{k}(\cdot)$ in each variable dimension and a scalar
offset $\mu$.  We impose additional constraints that each
function $f_{k}(\cdot)$ is convex, which can be
represented by its supporting hyperplanes, i.e.,
\begin{equation}\label{hyper}
      f_{i'k} \geq f_{ik} + \beta_{ik}(x_{i'k}-x_{ik}) \quad
      \textrm{for all $i,i' = 1,\ldots, n$,}
\end{equation}
where $f_{ik}\coloneqq f_{k}(x_{ik})$ is the function value and $\beta_{ik}$ is a
subgradient at point $x_{ik}$. This ostensibly requires $O(n^2 p)$ constraints to
impose the supporting hyperplane constraints.
In fact, only $O(np)$
constraints suffice, since univariate convex functions are
characterized by the condition that the subgradient, which is a scalar, must
increase monotonically. This observation leads to the  optimization
\begin{equation}
\begin{split}
       \min_{\{f_k,\beta_k\},\mu} & \;\; \frac{1}{2n}\sum_{i=1}^{n}
                     \Bigl( y_{i}-\mu - \sum_{k=1}^{p}f_{ik}\Bigr)^{2} 
                         + \lambda\sum_{k=1}^{p}\|f_k\|_{\infty} \\
       \textrm{subject to} &\;\; \textrm{for all $k=1,\ldots, p$:}\\
       & \;\; f_{\perm{i+1} k} = f_{\perm{i} k} +
       \beta_{\perm{i} k}(x_{\perm{i+1} k}-x_{\perm{i} k}),\;\textrm{for $i=1,\ldots, n-1$}\\
       & \;\; \sum_{i=1}^{n}f_{ik}=0,\\
       & \;\; \beta_{\perm{i+1} k} \geq \beta_{\perm{i} k}\;\textrm{for $i=1,\ldots, n-1$}.
\end{split}
\label{np}
\end{equation}
Here $f_k$ denotes the vector $f_k = (f_{1k}, f_{2_k}\ldots, f_{nk})^T\in\reals^n$
and $\{\perm{1},\perm{2},\ldots,\perm{n}\}$ are the indices in the sorted ordering
of the values of coordinate $k$:
\begin{equation}
x_{\perm{1} k} \leq{} x_{\perm{2} k} \leq \cdots \leq{} x_{\perm{n} k}.
\end{equation}

We can solve for $\mu$ explicitly as  
$\mu = \frac{1}{n} \sum_{i=1}^n y_i = \bar{y}$.  This follows from the
KKT conditions
and the constraints $\sum_i f_{ki} = 0$.


The sparse convex additive model optimization in \eqref{np} is a quadratic program with
$O(np)$ variables and $O(np)$ constraints. 
Directly applying a QP solver for $f$ and $\beta$
is computationally expensive for relatively large
$n$ and $p$. However, notice that variables in different feature
dimensions are only coupled in the squared error term
$(y_{i}-\mu - \sum_{k=1}^{p}f_{ik})^{2}$. Hence, we can apply the block
coordinate descent method, where in each step we solve the following
QP subproblem for $\{f_k, \beta_k\}$ with the
other variables fixed. In matrix notation, the optimization is
\begin{align}
\begin{split}
\min_{ f_k, \beta_k, \gamma_k} \;\;& \frac{1}{2n} \| \uds{r}_k - \uds{f}_k \|_2^2 
     + \lambda \gamma_k \label{opt:1d_compact} \\
 \textrm{such that } & P_k \uds{f}_k = \diag(P_k \bds{x}_k)  \uds{\beta}_k \\
   & D_k \uds{\beta}_k \leq 0 \\
   & -\gamma_k \mathbf{1}_n \leq \uds{f}_k \leq \gamma_k \mathbf{1}_n   \\
   & \mathbf{1}_n^\tran \uds{f}_k = 0 
\end{split}
\end{align}
where $\uds{\beta}_k \in \R^{n-1}$ is the vector $\uds{\beta}_k =
(\beta_{1k}, \ldots, \beta_{(n-1)k})^T$, and
$\uds{r}_{k} \in \R^n$ is the residual vector $\uds{r}_{k} = (y_i -
\hat\mu - \sum_{k' \neq k} f_{ik'})^T$.
In addition, 
$P_k \in \R^{(n-1) \times n}$ is a permutation matrix where the $i$-th
row  is all zeros except for the value $-1$ in position $\perm{i}$ and
the value $1$ in
position $\perm{i+1}$, and $D_k \in \R^{(n-2) \times (n-1)}$ is another
permutation matrix  where the $i$-th row is all zeros except for a
value $1$  in position $\perm{i}$ and a value $-1$ in position $\perm{i+1}$.  We denote by
$\diag( v )$ the diagonal matrix with diagonal entries $v$.
The extra variable $\gamma_{k}$ is introduced to impose the
regularization penalty involving the $\ell_{\infty}$ norm.  


This QP
subproblem involves $O(n)$ variables, $O(n)$ constraints and a sparse
structure, which can be solved efficiently using optimization
packages. In our experiments we use {\sc mosek} (\href{http://www.mosek.com/}{www.mosek.com}).  We cycle through
all covariates $k$ from $1$ to $p$ multiple times until convergence.
Empirically, we observe that the algorithm converges in only a few
cycles. We also implemented an ADMM solver for \eqref{np}
\citep{Boyd:admm}, but found
that it is not as efficient as this blockwise QP solver.

After optimization, the function estimate for an input vector $\bds{x}$ is, according to \eqref{hyper},
\begin{equation}
\begin{split}
      \hat f(\bds{x}) & = \sum_{k=1}^{p} \hat f_k(x_{k})+ \hat \mu 
= \sum_{k=1}^{p}\max_{i} \Bigl\{\hat f_{ik}+ \hat \beta_{ik}(x_{k}-x_{ik})\Bigr\} +
      \hat \mu.
\end{split}
\end{equation} 

The univariate concave function estimation required in the DC stage is a straightforward
modification of optimization~\eqref{opt:1d_compact}. It is only
necessary to modify the linear inequality constraints so that the subgradients are
non-increasing: $\beta_{\perm{i+1}k} \leq \beta_{\perm{i}k}$.

\subsection{Alternative Formulation}
Optimization \eqref{np} can be reformulated in terms of the second
derivatives. The alternative formulation replaces the order constraints
$\beta_{\perm{i+1}k} \geq \beta_{\perm{i}k}$ with positivity constraints, which
simplifies the analysis.  



Define $d_{\perm{i}k}$ as the second
derivative: $d_{\perm{1}k} = \beta_{\perm{1}k}$, and $d_{\perm{i}k} = \beta_{\perm{i}k} -
\beta_{\perm{i-1}k}$ for $i > 1$. The convexity constraint is equivalent to the
constraint that $d_{\perm{i}k} \geq 0$ for all $i > 1$.

It is easy to verify that $\beta_{\perm{i}k} = \sum_{j \leq i} d_{\perm{j}k}$ and 
\begin{align*}
f_k(x_{\perm{i}k}) = & f_k(x_{\perm{i-1}k}) + \beta_{\perm{i-1}k}(x_{\perm{i}k} - x_{\perm{i-1}k}) \\
 =& f_k(x_{\perm{1}k}) + \sum_{j < i} \beta_{\perm{j}k} (x_{\perm{j}k} - x_{\perm{j-1}k}) \\
 =& f_k(x_{\perm{1}k}) + \sum_{j < i} \sum_{j' \leq j} d_{\perm{j'}k} (x_{\perm{j}k} - x_{\perm{j-1}k})\\
 =& f_k(x_{\perm{1}k}) + \sum_{j' < i} d_{\perm{j'}k} \sum_{i > j \geq j'} (x_{\perm{j}k} - x_{\perm{j-1}k}) \\
 =& f_k(x_{\perm{1}k}) + \sum_{j' < i} d_{\perm{j'}k} (x_{\perm{i}k} - x_{\perm{j'}k}).
\end{align*}
We can write this more compactly in matrix notation as
\begin{align}
\nonumber
\left[ \begin{array}{c}
f_k(x_{\perm{1}k}) \\
f_k(x_{\perm{2}k}) \\
\vdots \\
f_k(x_{\perm{n}k})
\end{array} \right] &=
\left[ \begin{array}{ccc}
    (x_{k1} - x_{\perm{1}k})_+ & \cdots & (x_{k1} - x_{\perm{n-1}k})_+ \\
    \cdots & & \\
    (x_{kn} - x_{\perm{1}k})_+ & \cdots & (x_{kn} - x_{\perm{n-1}k})_+ 
\end{array} \right]
\left[ \begin{array}{c}
    d_{\perm{1}k} \\
    \cdots \\
    d_{\perm{n-1}k}
\end{array} \right] + \mu_k \\[10pt]
& \equiv \Delta_k d_k + \mu_k
\end{align}
where $\Delta_k$ is a $n\times n-1$ matrix such that $\Delta_k(i,j) =
(x_{\perm{i}k} - x_{\perm{j}k})_+$, $d_k = (d_{\perm{1}k} ,\ldots,
d_{\perm{n-1}k})$, and $\mu_k = f_k(x_{\perm{1}k}) \mathbf{1}_n$.
Because $f_k$ has to be centered, $\mu_k = - \frac{1}{n}
\mathbf{1}_n^\tran \Delta_k d_k$, and therefore
\begin{equation}
\Delta_k d_k + \mu_k \mathbf{1}_n = 
   \Delta_k d_k - \frac{1}{n} \mathbf{1}_n \mathbf{1}_n^\tran \Delta_k d_k = 
   \bar{\Delta}_k d_k 
\label{eq:deltabar_defn}
\end{equation}
where $\bar{\Delta}_k \equiv \Delta_k - \frac{1}{n} \mathbf{1}_n \mathbf{1}_n^\tran \Delta_k$ is $\Delta_k$ with the mean of each column subtracted.

The above derivations prove the following proposition, which states that \eqref{np} has an alternative formulation.

\begin{proposition} 
\label{prop:alt_opt_form}
Let $\{\hat{f}_k, \hat{\beta}_k\}_{k=1,...,p}$ be an optimal solution to \eqref{np} and suppose $\bar{Y} = 0$. Define vectors $\hat{d}_k \in \R^{n-1}$ such that $\hat{d}_{\pi_k(1)k} = \hat{\beta}_{\pi_k(1)k}$ and $\hat{d}_{\pi_k(i)k} = \hat{\beta}_{\pi_k(i)k} - \hat{\beta}_{\pi_k(i-1)k}$ for $i > 1$. Then $\hat{f}_k = \bar{\Delta}_k \hat{d}_k$ and $\hat{d}_k$ is an optimal solution to the following optimization:
\begin{align}
\min_{\{d_k \in \R^{n-1}\}_{k=1,...p}}\;\; & \frac{1}{2n} 
       \Bigl\| Y - \sum_{k=1}^p 
              \bar{\Delta}_k d_k \Bigr\|_2^2 
               + \lambda_n \sum_{k=1}^p \|\bar{\Delta}_k d_k \|_\infty   
     \label{opt:alternate_opt} \\
\trm{such that}\;\;  & d_{\perm{2}k}, \ldots , d_{\perm{n-1}k} \geq 0  	
               \qquad \trm{(convexity).} \nonumber 
\end{align}
Likewise, suppose $\{\hat{d}_k\}_{k=1,...p}$ is a solution to \eqref{opt:alternate_opt}, define $\hat{\beta}_{\pi_k(i)k} = \sum_{j\leq i} \hat{d}_{\pi_k(j)k}$ and $\hat{f}_k = \bar{\Delta}_k \hat{d}_k$. Then $\{\hat{f}_k, \hat{\beta}_k\}_{k=1,...,p}$ is an optimal solution to \eqref{np}. $\bar{\Delta}$ is the $n$ by $n-1$ matrix defined by \eqref{eq:deltabar_defn}.

\end{proposition}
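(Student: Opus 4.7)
The plan is to verify that the map $\beta_k \leftrightarrow d_k$ defined by $d_{\pi_k(1)k} = \beta_{\pi_k(1)k}$ and $d_{\pi_k(i)k} = \beta_{\pi_k(i)k}-\beta_{\pi_k(i-1)k}$ for $i \geq 2$ (inverted by $\beta_{\pi_k(i)k} = \sum_{j\leq i} d_{\pi_k(j)k}$) is a linear bijection on $\R^{n-1}$, and then check that under this bijection the feasible set and the objective of \eqref{np} coincide with those of \eqref{opt:alternate_opt}. Because $\mu = \bar{y} = 0$ under the assumption $\bar{Y} = 0$, the offset can be dropped from the bookkeeping.

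First I would show the constraints match. Under the bijection, the monotonicity constraint $\beta_{\pi_k(i+1)k} \geq \beta_{\pi_k(i)k}$ for $i = 1,\ldots,n-2$ is equivalent to $d_{\pi_k(i+1)k} \geq 0$ for $i = 1,\ldots,n-2$, which is exactly the convexity constraint in \eqref{opt:alternate_opt} (note that $d_{\pi_k(1)k}$ is unconstrained in sign, matching the fact that the slope $\beta_{\pi_k(1)k}$ is unconstrained). Next, the telescoping calculation displayed immediately before the proposition shows that the hyperplane equality constraints
\[
f_{\pi_k(i+1)k} = f_{\pi_k(i)k}+\beta_{\pi_k(i)k}(x_{\pi_k(i+1)k}-x_{\pi_k(i)k})
\]
together with the initial value $f_k(x_{\pi_k(1)k}) = \mu_k$ force $f_k = \Delta_k d_k + \mu_k \mathbf{1}_n$. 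Imposing the centering constraint $\sum_i f_{ik} = \mathbf{1}_n^\tran f_k = 0$ pins down $\mu_k = -\tfrac{1}{n}\mathbf{1}_n^\tran \Delta_k d_k$, giving $f_k = \bar\Delta_k d_k$; conversely, any vector of the form $\bar\Delta_k d_k$ automatically satisfies $\mathbf{1}_n^\tran f_k = 0$ by the very definition of $\bar\Delta_k$ in \eqref{eq:deltabar_defn}. Thus the constraints $\{f_k, \beta_k : \text{hyperplane equalities, centering, monotonicity}\}$ are in bijection with $\{d_k : d_{\pi_k(2)k},\ldots, d_{\pi_k(n-1)k} \geq 0\}$ via $f_k = \bar\Delta_k d_k$.

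Second I would match the objectives. The cost in \eqref{np} depends on $\{f_k,\beta_k,\mu\}$ only through $\mu$ and the vectors $f_k$, so substituting $\mu = 0$ and $f_k = \bar\Delta_k d_k$ turns
\[
\frac{1}{2n}\sum_i\Bigl(y_i - \mu - \sum_k f_{ik}\Bigr)^2 + \lambda\sum_k \|f_k\|_\infty
\]
into the objective of \eqref{opt:alternate_opt}. Since the feasible sets and the objective values coincide at corresponding points, any minimizer of \eqref{np} yields a minimizer of \eqref{opt:alternate_opt} via the forward map, and any minimizer of \eqref{opt:alternate_opt} yields a minimizer of \eqref{np} via the inverse map and the formula $\hat\beta_{\pi_k(i)k} = \sum_{j\leq i}\hat d_{\pi_k(j)k}$.

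The only real obstacle is indexing hygiene: keeping track of the permutation $\pi_k$, the asymmetric role of $d_{\pi_k(1)k}$ (unconstrained) versus $d_{\pi_k(i)k}$ for $i\geq 2$ (nonnegative), and the fact that $\Delta_k$ is $n\times(n-1)$ rather than square. Nothing deeper than a linear change of variables is required, and the centering step is what converts $\Delta_k$ to $\bar\Delta_k$, absorbing the free initial value $\mu_k = f_k(x_{\pi_k(1)k})$.
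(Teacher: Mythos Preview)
Your proposal is correct and follows essentially the same approach as the paper: the paper's proof consists precisely of the derivations displayed immediately before the proposition (which you explicitly invoke), establishing the linear change of variables $\beta_k \leftrightarrow d_k$, the identity $f_k = \bar\Delta_k d_k$ after centering, and the equivalence of the convexity constraints. Your write-up is if anything slightly more careful in spelling out both directions of the bijection and the matching of feasible sets and objectives.
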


The decoupled concave postprocessing stage optimization is again
similar. Specifically, suppose $\hat{d}_k$ is the output of
optimization~\eqref{opt:alternate_opt}, and define the residual vector
\begin{equation}
\hat{r} = Y -
\sum_{k=1}^p \bar{\Delta}_k \hat{d}_k.
\label{eq:residual}
\end{equation}  
Then  for all $k$ such that $\hat{d}_k = 0$, the DC stage optimization is
formulated as
\begin{align}
  \min_{c_k}\;\; & 
      \frac{1}{2n} \Bigl \| \hat{r} - \Delta_k c_k \Bigr \|_2^2
      + \lambda_n \| \Delta_k c_k \|_\infty 
      \label{opt:alternate_opt_concave}\\
 \trm{such that }\;\; & c_{\perm{2}k}, \ldots, c_{\perm{n-1}k} \leq 0 \qquad \trm{(concavity).} \nonumber
\end{align}

We can use either the off-centered $\Delta_k$ matrix or the centered
$\bar{\Delta}_k$ matrix because the concave estimations are decoupled
and hence are not subject to non-identifiability under additive constants.


\section{Analysis of Variable Screening Consistency}
\label{sec:finitesample}

Our goal is to show that variable screening consistency. That is, as $n,p \rightarrow \infty$, $\mathbb{P}( \hat{S} = S)$ approaches 0 where $\hat{S}$ is the set of variables outputted by AC/DC in the finite sample setting (Algorithm~\ref{fig:backfitting:algo}) and $S$ is the set of variables outputted in the population setting~\eqref{eqn:acdc_vars_pop}.

We divide our analysis into two parts. We first establish a sufficient
deterministic condition for consistency of the sparsity pattern
screening procedure.  We then consider the stochastic setting and argue that the 
deterministic conditions hold with high probability. Note that in all of our results 
and analysis, we let $c,
C$ represent absolute constants; the actual values of $c,C$ may change from line to line. We derived two equivalent optimizations for AC/DC: \eqref{np} outputs $\hat{f}_k, \hat{g}_k$ and \eqref{opt:alternate_opt} outputs the second derivatives $\hat{d}_k$. Their equivalence is established in Proposition~\ref{prop:alt_opt_form} and we use both $\hat{d}_k$ and $\hat{f}_k$ in our analysis.

In our analysis, we assume that an upper bound $B$ to $\| \hat{f}_k \|_\infty$ 
is imposed in the optimization procedure, where $B \geq \| f^*_k \|_\infty$. This $B$-boundedness constraint is added so that we may use the convex function bracketing results from~\cite{kim2014global} to establish uniform convergence between the empirical risk and the population risk. We emphasize that this constraint is not needed in practice and we do not use it for any of our simulations.

\subsection{Deterministic Setting}

We analyze Optimization~\ref{opt:alternate_opt} and construct an additive convex solution $\{\hat{d}_k\}_{k=1,\ldots,p}$
that is zero for $k \in S^c$, where $S$ is the set of relevant
variables, and show that it satisfies the KKT
conditions for optimality of optimization~\eqref{opt:alternate_opt}. We
define $\hat{d}_k$ for $k \in S$ to be a solution to the restricted
regression (defined below). We also show that $\hat{c}_k =
0$ satisfies the optimality condition of
optimization~\eqref{opt:alternate_opt_concave} for all $k \in S^c$.

\begin{definition}
\label{def:restricted_regression}
We define the \emph{restricted regression} problem 
\[
\min_{d_k} \frac{1}{n} \Big\| Y - \sum_{k \in S} \bar{\Delta}_k d_k \Big\|_2^2 + 
   \lambda_n \sum_{k \in S} \| \bar{\Delta}_k d_k \|_\infty \quad \trm{such that} \, d_{k,1}, \ldots, d_{k,n-1} \geq 0
\]
where we restrict the indices $k$ in
optimization \eqref{opt:alternate_opt} to lie in some set $S$ which contains the true
relevant variables.
\end{definition}

\begin{theorem}[Deterministic setting]
\label{thm:deterministic}
Let $\{\hat{d}_k \}_{k \in S}$ be a minimizer of the restricted regression as defined above.
Let $\hat{r} \coloneqq Y - \sum_{k \in S} \bar{\Delta}_k \hat{d}_k$ be the restricted regression residual. 
Suppose for all $k\in S^c$, for all $i=1,\ldots,n$, $\lambda_n > | \frac{1}{2n}
\hat{r}^\tran \mathbf{1}_{(i:n)}|$ where $\mathbf{1}_{(i:n)}$ is 1 on
the coordinates of the 
$i$-th largest to the $n$-th largest entries of $X_k$ and 0
elsewhere.  Then the following two statements hold.
\begin{enumerate}
\item Let $\hat{d}_k = 0$ for $k \in S^c$.  Then
  \{$\hat{d}_k\}_{k=1,\ldots,p}$ is an optimal solution to
  optimization~\eqref{opt:alternate_opt}. Furthermore, any solution to
  the optimization program \eqref{opt:alternate_opt} must be zero on
  $S^c$.
\item For all $k \in S^c$, the solution $\hat{c}_k$ to optimization~\eqref{opt:alternate_opt_concave} must be zero.
\end{enumerate}

\end{theorem}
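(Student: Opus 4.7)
The plan is to apply a primal--dual witness construction, as was flagged in the overview of results.

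\emph{Primal candidate.} First I set $\hat d_k$ for $k\in S$ to be any minimizer of the restricted regression and $\hat d_k=0$ for $k\in S^c$. This vector is primal feasible, and the restricted-regression residual coincides with the $\hat r$ appearing in the hypothesis.

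\emph{KKT on the full program.} Next I write the KKT conditions for~\eqref{opt:alternate_opt}: for each $k$ there must exist $\xi_k$ with $\|\xi_k\|_1\le 1$ (giving a subgradient $\bar\Delta_k^\top\xi_k$ of $\|\bar\Delta_k d_k\|_\infty$ at $\hat d_k$) together with a multiplier $\mu_k$ satisfying $\mu_{k,1}=0$ (the slope coordinate is unconstrained), $\mu_{k,j}\ge 0$ for $j\ge 2$, and complementary slackness $\mu_{k,j}\hat d_{k,j}=0$, such that
\[
 \tfrac{1}{n}\bar\Delta_k^\top \hat r \;=\; \lambda_n \bar\Delta_k^\top \xi_k - \mu_k .
\]
On $S$ this is automatic from the optimality of the restricted regression. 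For $k\in S^c$, with $\hat d_k=0$ complementary slackness holds trivially, and the task reduces to exhibiting an explicit dual certificate $(\xi_k,\mu_k)$.

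\emph{Dual certificate via tail-sum structure.} The structural fact driving the construction is the identity
\[
\Delta_k e_j \;=\; \sum_{j'>j}\bigl(x_k^{(j')}-x_k^{(j'-1)}\bigr)\,\mathbf 1_{(j':n)},
\]
which expresses the $j$-th sorted column of $\Delta_k$ as a weighted sum of tail-indicators. Working under $\bar Y=0$ (as in Proposition~\ref{prop:alt_opt_form}), one has $\mathbf 1^\top\hat r=0$ and hence $\bar\Delta_k^\top\hat r=\Delta_k^\top\hat r$, so every entry of $\bar\Delta_k^\top \hat r$ is a weighted combination of the partial tail sums $R_{j'} := \hat r^\top\mathbf 1_{(j':n)}$, each bounded by $2n\lambda_n$ in absolute value by hypothesis. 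I parametrize the candidate $\xi_k$ by its own tail sums $S_{j'}=\xi_k^\top\mathbf 1_{(j':n)}$, using the fact that $\|\xi_k\|_1 = |S_n|+\sum_{i<n}|S_i-S_{i+1}|$ is the total variation of the extended sequence $(0=S_1,S_2,\ldots,S_n,0)$. I then choose the $S_{j'}$ so that (a) equality holds in the stationarity relation on the unconstrained slope coordinate and (b) one-sided domination holds on the second-difference coordinates, absorbing the slack into $\mu_k\ge 0$. The factor of two in the hypothesis $|R_{j'}|<2n\lambda_n$ provides exactly the room required to keep $\|\xi_k\|_1\le 1$ during this construction.

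\emph{Uniqueness and the DC stage.} Because the hypothesis is \emph{strict}, the certificate can be taken strictly inside the dual feasible region. A standard convexity argument---any optimum is a convex combination, and the strict dual certificate would contradict optimality for any mixture with a non-zero $S^c$ component---then forces any minimizer to be zero on $S^c$. For the DC stage~\eqref{opt:alternate_opt_concave}, the residual entering the optimization is the same $\hat r$, and the only modification is that non-negativity flips to non-positivity; an identical argument, with the sign of the inequalities on the constrained coordinates reversed, produces a certificate at $\hat c_k=0$, so $\hat c_k=0$ is optimal for every $k\in S^c$. The absolute-value form of the hypothesis ensures that both sign directions are simultaneously controlled.

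\emph{Main obstacle.} The substantive work is in Step~3: one must exhibit a tail-sum sequence of total variation at most one that simultaneously matches $(\bar\Delta_k^\top\hat r)_1/(n\lambda_n)$ exactly and dominates $(\bar\Delta_k^\top\hat r)_j/(n\lambda_n)$ on every constrained coordinate. The partial-tail-sum decomposition of $\bar\Delta_k$ is what translates the scalar hypothesis on $\hat r^\top\mathbf 1_{(i:n)}$ into a usable bound on the dual variable, and the factor of two is what creates the necessary slack.
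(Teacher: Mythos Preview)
Your overall strategy is the same as the paper's: primal--dual witness, KKT on the full program, automatic verification on $S$ via restricted-regression optimality, and for $k\in S^c$ the reduction (via the identity $\Delta_k e_j=\sum_{j'>j}g_{j'}\mathbf 1_{(j':n)}$) of the stationarity system to a statement about the tail sums $R_{j'}=\hat r^{\top}\mathbf 1_{(j':n)}$. The tail-sum parametrization of $\xi_k$ and the total-variation interpretation of $\|\xi_k\|_1$ are also exactly the structure the paper exploits. The uniqueness argument and the DC symmetry are fine.

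The gap is in Step~3, precisely where you assert that ``the factor of two in the hypothesis $|R_{j'}|<2n\lambda_n$ provides exactly the room required to keep $\|\xi_k\|_1\le 1$.'' This is false as stated. Take $n=3$, sorted design $x_k=(0,\tfrac12,1)$, and $\hat r/(n\lambda_n)=(2-\epsilon,\,-2(2-\epsilon),\,2-\epsilon)$ in sorted order, so $\mathbf 1^{\top}\hat r=0$ and the tail sums satisfy $|R_i|/(2n)<\lambda_n$ for every $i$. The stationarity system for $\xi\in\mathbb R^3$ becomes $\xi_1=\xi_3$ (the equality on the slope coordinate) and $\xi_3-\xi_2\ge 3(2-\epsilon)$ (the inequality on the single second-difference coordinate). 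But then
\[
\|\xi\|_1=2|\xi_3|+|\xi_2|\ \ge\ |\xi_3|+|\xi_2|\ \ge\ |\xi_3-\xi_2|\ \ge\ 3(2-\epsilon)\ >\ 1,
\]
so no $\ell_1$-feasible dual certificate exists even though the factor-of-two tail-sum bound holds strictly. The issue is that the second-difference rows of $\Delta_k^{\top}\hat r$ are \emph{weighted} sums $\sum_{j'>j}g_{j'}R_{j'}$, and a uniform bound on $|R_{j'}|$ alone does not control how large those rows can be relative to what a unit-$\ell_1$ $\xi$ can match.

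The paper's actual proof does not get by with the factor of two; the version it proves in the appendix strengthens the hypothesis to $\lambda_n\ge \mathrm{range}_k\cdot\frac{32}{n}|R_i|$ together with a gap condition $\max_i g_i/\mathrm{range}_k\le \tfrac1{16}$ and $\mathrm{range}_k\ge 1$. With this headroom they construct $\mathbf u$ supported on only three sorted positions (the first, a ``midpoint'' index $i^*$ chosen so that $x_{kn}-x_{ki^*}$ is roughly half the range, and the last), giving a piecewise-constant tail-sum sequence. They then verify coordinate by coordinate that $\lambda[\Delta_k^{\top}\mathbf u]_j$ dominates $\tfrac1n[\Delta_k^{\top}\hat r]_j$---working forward from $j=1$ on the left half and backward from $j=n-1$ on the right half---and finally check $\|\mathbf u\|_1<1$. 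The gap condition is what lets the three-point certificate ``climb'' fast enough on each half. Your plan will go through once you either (i) adopt the stronger hypothesis and carry out this explicit three-point construction, or (ii) supply an alternative construction together with a matching hypothesis; but the bare factor-of-two bound is not enough, and the certificate step cannot be left implicit.
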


This result holds regardless of whether or not we impose the boundedness conditions in optimization~\eqref{opt:alternate_opt} and~\eqref{opt:alternate_opt_concave}.
The full proof of Theorem~\ref{thm:deterministic} is in Section~\ref{sec:deterministic_proof} of the Appendix. We allow $S$ in Theorem~\ref{thm:deterministic} to be any set containing the relevant variables; in Lasso analysis, $S$ is taken to be the set of relevant variables; we will take $S$ to be the set of variables chosen by the additive convex and decoupled concave procedure in the population setting, which is guaranteed to contain the relevant variables because of additive faithfulness.

Theorem~\ref{thm:deterministic} allows us to separately analyze the false negative
rates and false positive rates. To control false positives,
we analyze the condition on $\lambda_n$ for $k \in S^c$. To control
false negatives, we need only analyze the restricted regression with only $|S|$ variables.

The proof of Theorem~\ref{thm:deterministic} analyses the KKT
conditions of optimization~\eqref{opt:alternate_opt}.  This parallels
the now standard \emph{primal-dual witness}
technique~\citep{wainwright2009sharp}. However, we cannot derive analogous
\emph{mutual incoherence} conditions because the estimation is
nonparametric---even the low dimensional restricted regression has
$s(n-1)$ variables. The details of the proof are given in
Section~\ref{sec:deterministic_proof} of the Appendix.

\subsection{Probabilistic Setting}

In the probabilistic setting we treat the covariates as random.  We
adopt the following standard setup:

\begin{enumerate}
\item The data $X^{(1)},\ldots, X^{(n)} \sim F$ are iid from
a distribution $F$ with a density $p$ that is supported and strictly positive on $\mathcal{X}=[-b,b]^p$. 
\item The response is $Y = f_0(X) + W$ where $W$ is
  independent, zero-mean noise; thus $Y^{(i)} = f_0(X^{(i)}) + W^{(i)}$.
\item The regression function $f_0$ satisfies
$f_0(X) = f_0(X_{S_0})$, where $S_0 = \{1,\ldots,s_0\}$ is the set of
relevant variables.
\end{enumerate}

Let $\mathcal{C}^1$ denote the set of univariate convex functions
supported on $[-b,b]$, 
and let  $\mathcal{C}_1^{p}$ denote the set of convex additive functions
$\mathcal{C}_1^p \equiv \{ f \,:\, f = \sum_{k=1}^p f_k, \,
   f_k \in \mathcal{C}^1 \} $.  
Let $f^*(\mathbf{x}) = \sum_{k=1}^p f^*_k(x_k)$ be the population risk
minimizer in $\mathcal{C}_1^p$, 
\begin{equation}
f^* = \arg\min_{f \in \mathcal{C}_1^p} \E\big(f_0(X) - f^*(X)
\big)^2.
\end{equation}
$f^*$ is the unique minimizer by Theorem~\ref{thm:acdc_faithful}. Similarly, we define $\mh \mathcal{C}^1$ as the set of univariate concave functions supported on $[-b, b]$ and define
\begin{equation}
g^*_k = \arg\min_{g_k \in \mh \mathcal{C}^1} \E \big( f_0(X) - f^*(X)
- g_k(X_k) \big)^2.
\end{equation}
The $\hat{g}_k$'s are unique minimizers as well. We let $S = \{ k = 1,\ldots,p \,:\, f^*_k \neq 0 \trm{ or } g^*_k \neq 0\}$ and let $s = |S|$. By additive faithfulness (Theorem~\ref{thm:acdc_faithful}), it must be that $S_0 \subset S$ and thus $s \geq s_0$. In some cases, such as when $X_{S_0}, X_{S^c_0}$ are independent, we have $S = S_0$.
Each of our theorems will use a subset of the following assumptions:
\begin{packed_enum}
\item[A1:] $X_S, X_{S^c}$ are independent. 
\item[A2:] $f_0$ is convex and twice-differentiable. 
\item[A3:] $\|f_0\|_\infty \leq sB$ and $\| f^*_k \|_\infty \leq B$ for all $k$.
\item[A4:] $W$ is mean-zero sub-Gaussian, independent of $X$, with scale $\sigma$; i.e., for all $t \in \R$, $\E e^{t \epsilon} \leq e^{\sigma^2 t^2 / 2}$.
\end{packed_enum}
By assumption A1, $f^*_k$ is must be zero for $k\notin S$.
We define $\alpha_{+}, \alpha_{-}$ as a measure of the signal strength of the weakest variable:
\begin{align}
\alpha_{+} &= \inf_{f \in \mathcal{C}_1^p \,:\, \textrm{supp}(f) \subsetneq \textrm{supp}(f^*)} 
       \Big\{ \mathbb{E} \big( f_0(X) - f(X) \big)^2 - 
        \mathbb{E} \big( f_0(X) - f^*(X) \big)^2  \Big\} \label{eqn:signal_level_defn} \\
\alpha_{-} &=   \min_{k \in S \,:\, g^*_k \neq 0}
      \Big\{ \mathbb{E} \big( f_0(X) - f^*(X) \big)^2 - 
    \mathbb{E} \big( f_0(X) - f^*(X) - g^*_k(X_k) \big)^2 \Big\} \nonumber
\end{align}
$\alpha_+$ is a lower bound on the excess risk incurred by any additive convex function whose support is strictly smaller than $f^*$; $\alpha_+ > 0$ since $f^*$ is the unique risk minimizer. Likewise, $\alpha_-$ lower bounds the excess risk of any decoupled concave fit of the residual $f_0 - f^*$ that is strictly more sparse than the optimal decoupled concave fit $\{\hat{g}_k^*\}$; $\alpha_- > 0$ by the uniqueness of $\{g^*_k\}$ as well. These quantities play the role of the absolute value of the smallest nonzero coefficient in the true linear model in lasso theory.  Intuitively, if
$\alpha_{+}$ is small, then it is easier to make a false omission in the
additive convex stage of the procedure. If $\alpha_{-}$ is small, then
it is easier to make a false omission in the decoupled concave stage
of the procedure.

\begin{remark}
  We make strong assumptions on the covariates in A1 in order to make
  weak assumptions on the true regression function $f_0$ in
  A2. In particular, we do not assume that $f_0$ is additive. In
  important direction for future work is to weaken assumption A1.
  Our simulation experiments indicate that the procedure can be
  effective even when the relevant and irrelevant variables are correlated.
\end{remark}

\begin{theorem}[Controlling false positives]
\label{thm:false_positive}
Suppose assumptions A1-A4 hold. Define $\tilde{\sigma} \equiv \max(\sigma, B)$,
and suppose 
\begin{equation}
\lambda_n \geq 8 s \tilde{\sigma}  \sqrt{ \frac{\log^2 np}{n}}.
\end{equation}  
Then with probability at least $ 1 - \frac{12}{np}$, for all $k \in
S^c$, and for all $i'=1,\ldots, n$,
\begin{equation}
\Big| \frac{1}{2n}\hat{r}^\tran \mathbf{1}_{(i':n)_k} \Big| < \lambda_n
\end{equation}
and for all $k \in S^c$, both the AC solution $\hat{f}_k$ from optimization~\eqref{opt:alternate_opt} and the DC solution $\hat{g}_k$ from optimization~\eqref{opt:alternate_opt_concave} are zero. 
\end{theorem}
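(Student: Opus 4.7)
\textbf{Proof plan for Theorem~\ref{thm:false_positive}.}
By Theorem~\ref{thm:deterministic}, it suffices to show that the event
\[
\mathcal{E} \;=\; \Bigl\{\,\Bigl|\tfrac{1}{2n}\hat r^{\tran}\mathbf{1}_{(i':n)_k}\Bigr| < \lambda_n
            \;\text{ for all } k\in S^c,\; i'=1,\ldots,n\,\Bigr\}
\]
holds with probability at least $1-12/(np)$. Write $\hat r^{(i)}=f_0(X^{(i)})-\hat f(X^{(i)})+W^{(i)}$, where $\hat f=\sum_{k\in S}\bar\Delta_k\hat d_k$. The crucial structural observation is that $f_0$ depends only on $X_{S_0}\subseteq X_S$ (by A1/A2 on the support) and $\hat f$ depends only on $X_S$ and $W$; hence, under A1, the vector $\hat r=(\hat r^{(1)},\ldots,\hat r^{(n)})$ is independent of $X_{S^c}$. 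Furthermore, since $\mu$ is chosen by the KKT condition to zero out the sample mean, we have $\sum_i \hat r^{(i)} = 0$ (WLOG $\bar Y = 0$ by centering).

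For each $k\in S^c$, the indicator $\mathbf{1}_{(i':n)_k}$ is a deterministic function of the rank ordering of $X_k^{(1)},\ldots,X_k^{(n)}$, which are iid from a continuous distribution and independent of $\hat r$. Condition on $\hat r$; then the ranks of $X_k^{(i)}$ form a uniformly random permutation $\pi_k$ of $\{1,\ldots,n\}$, and
\[
\hat r^{\tran}\mathbf{1}_{(i':n)_k} \;=\; \sum_{j=i'}^{n}\hat r^{(\pi_k(j))} \;=\; -\sum_{j=1}^{i'-1}\hat r^{(\pi_k(j))},
\]
a sum of $i'-1$ entries of $\hat r$ drawn without replacement. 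This reduces the problem to concentration of random-subset sums with known envelope.

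To bound that envelope, observe $|\hat r^{(i)}|\le |f_0(X^{(i)})|+|\hat f(X^{(i)})|+|W^{(i)}|$. Assumption A3 gives $|f_0|\le sB$ and the imposed constraint $\|\hat f_k\|_\infty\le B$ gives $|\hat f|\le sB$ (only $s$ components are active). Sub-Gaussianity of $W$ (A4) yields $\max_i |W^{(i)}|\le c\sigma\sqrt{\log np}$ on an event of probability $\ge 1-2/(np)^2$. Hence on this event, $\|\hat r\|_\infty \le M := c_1(sB+\sigma\sqrt{\log np}) \le c_2 s\tilde\sigma\sqrt{\log np}$ uniformly (using $s\ge 1$ and a mild assumption that $s$ does not dwarf $\sqrt{\log np}$, which will in any case be implied by the hypothesis on $\alpha_+$). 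Now apply Hoeffding's inequality for sampling without replacement (which satisfies the same tail as iid Hoeffding by the classical reduction): conditional on $\hat r$,
\[
\Pr\!\Bigl(\bigl|\sum_{j=1}^{i'-1}\hat r^{(\pi_k(j))}\bigr|>t\Bigr)\;\le\;2\exp\!\Bigl(-\tfrac{t^2}{2(i'-1)M^2}\Bigr)\;\le\;2\exp\!\Bigl(-\tfrac{t^2}{2nM^2}\Bigr).
\]
Setting $t = 2n\lambda_n = 16 s\tilde\sigma\sqrt{n}\,\log(np)$ and using $M\le c_2 s\tilde\sigma\sqrt{\log np}$ yields exponent $\gtrsim \log(np)$ with a large constant, so each event has probability $\le 2/(np)^c$ for some $c>3$.

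Finally, union-bound the bad event over the $(p-s)\cdot n \le np$ choices of $(k,i')$ and add the $2/(np)^2$ mass where the noise-envelope bound fails; the total is at most $12/(np)$ for $n$ large enough. This establishes $\mathcal{E}$ with the claimed probability, and Theorem~\ref{thm:deterministic} then forces $\hat f_k=0$ and (since the same residual is fed into the DC stage with the same $\lambda_n$) $\hat g_k=0$ for every $k\in S^c$. The main obstacle is the concentration step: one must argue cleanly that the randomness of $X_{S^c}$ is preserved after conditioning on $\hat r$, and invoke a sharp enough Hoeffding-type bound for sampling without replacement so that a single value of $\lambda_n$ controls all $n(p-s)$ partial sums simultaneously; the envelope bound $M$ must also be small enough to absorb the $s^2$ in the numerator of the exponent.
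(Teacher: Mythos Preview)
Your proposal is correct and follows essentially the same route as the paper: exploit the independence of $\hat r$ and $X_{S^c}$ under A1, bound $\|\hat r\|_\infty$ via A3 and sub-Gaussian tails on $W$, then apply a Hoeffding/Serfling inequality for sampling without replacement to the partial sums $\hat r^\tran\mathbf{1}_{(i':n)_k}$ and union-bound over $(k,i')$. Your observation that $\sum_i \hat r^{(i)}=0$ under the centering $\bar Y=0$ is a clean simplification; the paper instead separately bounds $|\tfrac{1}{n}\hat r^\tran\mathbf{1}|$ before invoking Serfling, but the two arguments are otherwise the same.
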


The proof of Theorem~\ref{thm:false_positive} exploits independence of
$\hat{r}$ and $X_k$ from A1; when $\hat{r}$ and $X_k$ are independent,
$\hat{r}^\tran \mathbf{1}_{(i':n)}$ is the sum of $n - i' +1$ random
coordinates of $\hat{r}$.  We can then use concentration of
measure results for sampling without replacement to argue that $|
\frac{1}{n} \hat{r}^\tran\mathbf{1}_{(i':n)}|$ is small with high
probability. The result of Theorem~\ref{thm:deterministic} is then
used. The full proof of Theorem~\ref{thm:false_positive} is in
Section~\ref{sec:false_positive_proof} of the Appendix.

\begin{theorem}[Controlling false negatives]
\label{thm:false_negative}
Suppose assumptions A1-A4 hold. Let $\hat{f}$ be any AC solution to
the restricted regression with $B$-boundedness constraint, and let
$\hat{g}_k$ be any DC solution to the restricted regression with
$B$-boundedness constraint. Let $\tilde{\sigma}$ denote $\max(\sigma,
B)$.  Suppose 
\begin{equation}
\lambda_n \leq 9 s \tilde{\sigma} \sqrt{\frac{1}{n} \log^2 np}
\end{equation}
and that $n$ is sufficiently large so that 
\begin{equation}
\frac{n^{4/5}}{\log np} \geq B^4 \tilde{\sigma}^2 s^5.
\end{equation}
Assume that the signal-to-noise ratio satisfies
\begin{align}
\frac{\alpha_{+}}{\tilde{\sigma}} & \geq c B^2
\sqrt{\frac{s^5}{n^{4/5}} \log^2 np}\\
\frac{\alpha_{-}^2}{\tilde{\sigma}} &\geq c B^2
\sqrt{\frac{s^5}{n^{4/5}} \log^2 np}
\end{align}
where $c$ is a constant.  Then with probability at least $1 -
\frac{C}{n}$ for some constant $C$, 
$\hat{f}_k \neq 0$ or $\hat{g}_k \neq 0$ 
for all $k \in S$.
\end{theorem}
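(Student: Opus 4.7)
The plan is to establish false-negative control via a basic-inequality argument for each of the two stages, combined with uniform convergence over shape-constrained classes using the bracketing-number bounds of \cite{kim2014global}. Throughout I work on the high-probability event that the uniform bounds hold.

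\textbf{AC stage.} Fix $k \in S$ with $f^*_k \neq 0$ and suppose for contradiction that $\hat{f}_k = 0$; write $\hat{f} = \sum_{k' \in S} \hat{f}_{k'}$, so $\mathrm{supp}(\hat f)\subsetneq \mathrm{supp}(f^*)$. By empirical optimality of $\hat f$ in the restricted $B$-bounded program, and comparing against $f^*$, the basic inequality gives
\begin{align*}
\tfrac{1}{n}\sum_i (f_0(X_i)-\hat f(X_i))^2 &\le \tfrac{1}{n}\sum_i (f_0(X_i)-f^*(X_i))^2 \\
&\quad + \tfrac{2}{n}\sum_i W_i(\hat f(X_i)-f^*(X_i)) + \lambda_n s B.
\end{align*}
The bracketing entropy of $B$-bounded univariate convex functions on $[-b,b]$ scales as $\epsilon^{-1/2}$; Dudley's chaining, with an additional $s$-factor from the additive decomposition and the scale $sB$ of $f_0$, yields
\[
\sup_{f\in\mathcal{C}_1^p,\,\|f_k\|_\infty\le B} \bigl|\tfrac1n\sum_i (f_0-f)^2(X_i) - \mathbb{E}(f_0-f)^2\bigr| + \bigl|\tfrac1n\sum_i W_i(f-f^*)(X_i)\bigr| \lesssim \tilde\sigma B\sqrt{\tfrac{s^5}{n^{4/5}}\log^2 np}.
\]
Combining this with the basic inequality and the fact that $\mathbb{E}(f_0-\hat f)^2 - \mathbb{E}(f_0-f^*)^2\ge \alpha_+$ (by definition of $\alpha_+$, since the support is strictly smaller) yields $\alpha_+ \lesssim \tilde\sigma B\sqrt{s^5 n^{-4/5}\log^2 np} + \lambda_n s B$, contradicting the assumed lower bound on $\alpha_+$ for the stipulated $\lambda_n$.

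\textbf{DC stage.} Fix $k \in S$ with $g^*_k\neq 0$; we may assume $\hat f_k=0$ (otherwise nothing to prove). Suppose for contradiction $\hat g_k=0$, and let $\hat r_i = Y_i - \hat f(X_i)$. The DC basic inequality reads $\tfrac{1}{n}\sum \hat r_i^2 \le \tfrac1n\sum(\hat r_i - g^*_k(X_{ik}))^2 + \lambda_n B$. Uniform convergence over $B$-bounded univariate concave functions (same bracketing rate, no $s$ factor since DC is univariate) together with the noise cross-term bound produces
\[
\mathbb{E}(f_0-\hat f)^2 - \mathbb{E}(f_0-\hat f - g^*_k)^2 \;\lesssim\; \tilde\sigma B\sqrt{\tfrac{1}{n^{4/5}}\log^2 np} + \lambda_n B.
\]
Decomposing $f_0-\hat f = (f_0-f^*)+(f^*-\hat f)$, the left side equals $\alpha_{-,k} + 2\mathbb{E}\bigl[g^*_k(X_k)(f^*(X)-\hat f(X))\bigr]$ with $\alpha_{-,k}\ge \alpha_-$. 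The cross term is bounded by $B\sqrt{\mathbb{E}(\hat f - f^*)^2}$; but the AC excess risk bound above, together with strong convexity of the population risk at its unique minimizer $f^*$, gives $\mathbb{E}(\hat f-f^*)^2 \lesssim \tilde\sigma B\sqrt{s^5 n^{-4/5}\log^2 np}$. Thus the cross term is $\lesssim B \cdot \bigl(\tilde\sigma B\sqrt{s^5 n^{-4/5}\log^2 np}\bigr)^{1/2}$, and rearranging gives $\alpha_-^2 \lesssim \tilde\sigma B^2\sqrt{s^5 n^{-4/5}\log^2 np}$, contradicting the assumed lower bound on $\alpha_-^2/\tilde\sigma$.

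\textbf{Main obstacle.} The hard part will be establishing the uniform convergence bounds at the correct $n^{-2/5}$ rate with explicit polynomial-in-$s$ constants: this requires importing the \cite{kim2014global} univariate convex bracketing estimate, lifting it to the additive class $\mathcal{C}_1^p$ via symmetrization over sums of $s$ one-dimensional components, and then handling the sub-Gaussian noise cross-term via a Bernstein-type chaining argument under the $\|f_0\|_\infty \le sB$ envelope. A secondary obstacle is the quadratic dependence $\alpha_-^2$: it emerges from having to pay $\sqrt{\mathbb{E}(\hat f - f^*)^2}$ rather than the excess risk itself when propagating the AC error into the DC residual, so the AC analysis must yield an $L_2$ bound (not just excess risk), which is where strong convexity of the population objective at the unique minimizer $f^*$ (guaranteed by Theorem~\ref{thm:acdc_faithful}) is essential. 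Once these empirical-process bounds are in place, the two-stage basic-inequality argument above closes the proof.
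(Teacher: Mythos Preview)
Your proposal is correct and follows essentially the same route as the paper's proof. Both arguments use the basic inequality plus bracketing-entropy bounds from \cite{kim2014global} to control the AC excess risk, then propagate the AC error into the DC stage via a Cauchy--Schwarz bound on the cross term $\mathbb{E}[g^*_k(\hat f - f^*)]$, with $\|\hat f - f^*\|_P^2$ controlled by the AC excess risk; the paper obtains this last step from the projection inequality $\|\hat f - f^*\|_P^2 \le \|f_0 - \hat f\|_P^2 - \|f_0 - f^*\|_P^2$ (since $f^*$ is the $L_2(P)$-projection of $f_0$ onto the convex set $\mathcal{C}_1^p$), which is exactly your ``strong convexity at the unique minimizer'' observation. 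The only cosmetic difference is that the paper bounds $\|f_0 - f^* - \hat g_k\|_P^2 - \|f_0 - f^* - g^*_k\|_P^2$ directly via a three-term telescoping decomposition and then specializes, whereas you assume $\hat g_k = 0$ up front and work with $\mathbb{E}(f_0-\hat f)^2 - \mathbb{E}(f_0-\hat f - g^*_k)^2$; these lead to the same $\alpha_-^2$ bound. One technical detail the paper handles explicitly that you should not forget in execution: $f^*$ is not empirically centered, so the basic-inequality comparison must be against $f^* - \bar f^*$, with the centering error absorbed as a lower-order $(sB)^2 n^{-1}\log(1/\delta)$ term.
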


This is a finite sample version of
Theorem~\ref{thm:convex_faithful}. We need stronger assumptions in
Theorem~\ref{thm:false_negative} to use our additive faithfulness
result, Theorem~\ref{thm:convex_faithful}. The full proof of Theorem~\ref{thm:false_negative} is in Section~\ref{sec:false_negative_proof} of the appendix.

Combining Theorems~\ref{thm:false_positive} and
~\ref{thm:false_negative} 
we obtain the following result.
\begin{corollary}
  Suppose the assumptions of Theorem~\ref{thm:false_positive} and
  Theorem~\ref{thm:false_negative} hold.  
Then with probability at least $1-\frac{C}{n}$
\begin{align}
\hat{f}_k \neq 0 \trm{ or }\; \hat{g}_k \neq 0 &\trm{ for all } k \in S\\
\hat{f}_k = 0 \trm{ and }\; \hat{g}_k = 0 & \trm{ for all } k \notin S
\end{align}
for some constant $C$.

\end{corollary}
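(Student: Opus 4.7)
The plan is to obtain the corollary by a direct union bound of the two theorems, provided the hypotheses are mutually consistent. First I would verify that the stated regularization regime $\lambda_n = \Theta\bigl(s\tilde{\sigma}\sqrt{n^{-1}\log^2 np}\bigr)$ is simultaneously admissible: Theorem~\ref{thm:false_positive} needs $\lambda_n \geq 8 s \tilde{\sigma}\sqrt{\log^2 np/n}$ and Theorem~\ref{thm:false_negative} needs $\lambda_n \leq 9 s \tilde{\sigma}\sqrt{\log^2 np / n}$, so any choice in this narrow band (for instance $\lambda_n = 8 s \tilde{\sigma}\sqrt{\log^2 np/n}$) satisfies both conditions. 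The signal-to-noise and sample-size assumptions of Theorem~\ref{thm:false_negative} are additional standing hypotheses imported directly, so there is nothing to reconcile there.

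Next I would invoke the two theorems as black boxes. Let $E_+$ denote the event that, for every $k \in S^c$ and every $i' = 1,\dots,n$, $|\tfrac{1}{2n}\hat{r}^\tran \mathbf{1}_{(i':n)_k}| < \lambda_n$ so that, by Theorem~\ref{thm:deterministic} combined with Theorem~\ref{thm:false_positive}, both the AC estimator $\hat{f}_k$ and the DC estimator $\hat{g}_k$ vanish for every $k \notin S$. By Theorem~\ref{thm:false_positive}, $\mathbb{P}(E_+) \geq 1 - 12/(np)$. Let $E_-$ denote the event that for every $k \in S$ either $\hat{f}_k \neq 0$ or $\hat{g}_k \neq 0$. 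By Theorem~\ref{thm:false_negative}, $\mathbb{P}(E_-) \geq 1 - C/n$ for some constant $C$.

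Finally, a union bound yields $\mathbb{P}(E_+ \cap E_-) \geq 1 - 12/(np) - C/n \geq 1 - C'/n$ for an absolute constant $C'$, and on this intersection both conclusions of the corollary hold simultaneously: no false positives from $E_+$ and no false negatives from $E_-$.

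There is no genuine obstacle here since both theorems have already been established; the only mild subtlety is checking that Theorem~\ref{thm:false_negative} as stated applies to the restricted regression, while the false-positive argument (via Theorem~\ref{thm:deterministic}) extends such a restricted solution to a full optimizer that is zero off $S$. Because Theorem~\ref{thm:deterministic} guarantees that any optimizer of \eqref{opt:alternate_opt} must vanish on $S^c$ under the dual-feasibility condition supplied by $E_+$, the restricted minimizer used in $E_-$ coincides (on its support) with any full optimizer, so the two events may indeed be combined consistently.
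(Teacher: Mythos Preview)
Your proposal is correct and matches the paper's own treatment: the paper does not give a separate proof of the corollary but simply states that it follows by combining Theorems~\ref{thm:false_positive} and~\ref{thm:false_negative}, which is exactly the union-bound argument you spell out. Your observation about reconciling the restricted regression of Theorem~\ref{thm:false_negative} with the full optimizer via Theorem~\ref{thm:deterministic} is a useful clarification that the paper leaves implicit.
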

The above corollary implies that consistent variable selection is
achievable at the same exponential scaling of the ambient dimension
scaling attained with parametric models, $p = O(\exp(n^c))$ for $c<1$.
The cost of nonparametric modeling through shape constraints is
reflected in the scaling with respect to the number of relevant
variables, which can scale as $s = o(n^{4/25})$.

\begin{remark}
  \citet{dalalyan:12} have shown that under tradtional smoothness
  constraints, even with a product distribution, variable selection is
  achievable only if $n > O(e^{s_0})$. It is interesting to observe that
  because of additive faithfulness, the convexity assumption enables a
  much better scaling of $n = O(\textrm{poly}(s_0))$, demonstrating that
  geometric constraints can be quite different from the previously
  studied smoothness conditions.
\end{remark}


 

\def\x{x}
\def\Q{Q}
\def\bds#1{#1}
\def\tts#1{\texttt{\small #1}}

\section{Experiments}
\label{sec:thesims}
We first illustrate our methods using a simulation of the model
\begin{equation}\nonumber
         Y_i = \x_{iS}^{\top}\Q\x_{iS} + \epsilon_i \quad (i=1,2,\ldots,n).
\end{equation}
Here $\x_{i}$ denotes data sample $i$ drawn from $\mathcal{N}(\bds{0}, \Sigma)$, 
$\x_{iS}$ is a subset of $\x_i$ with dimension $|S|=5$, where $S$
represents the relevant variable set, and 
$\epsilon_i$ is additive noise drawn from $\mathcal{N}(0,1)$. 
The matrix $\Q$ is a symmetric positive definite matrix of dimension $|S|\times{}|S|$. 
Note that if $\Q$ is diagonal, then the true function is convex
additive; 
otherwise the true function is convex but not additive.
For all simulations in this section, we set $\lambda=4\sqrt{{\log(np)}/{n}}$. 

In the first simulation, we vary the ambient dimension $p$. We set $Q$ as one on the diagonal and $1/2$ on the off-diagonal with $0.5$ probability, and choose $n=100, 200,\ldots,1000$ and $p=64,128,256$ and $512$. We use independent design by setting $\Sigma = \textrm{I}_p$.
For each $(n,p)$ combination, we generate $100$ independent data
sets. For each data set we use the AC/DC algorithm and mark feature $j$ as irrelevant if both the AC estimate $\| \hat{f}_j \|_\infty$ and the DC estimate $\| \hat{g}_k \|_\infty$ are smaller than $10^{-6}$.
We plot the probability of exact
support recovery over the $100$ data sets in Figure \ref{Support}(a).  We
observe that the algorithm performs consistent variable selection even if the dimensionality is large. To give the reader a
sense of the running speed, for a 
data set with $n=1000$ and $p=512$, the code runs in roughly two 
minutes on a machine with 2.3 GHz Intel Core i5 CPU and 4 GB memory.

In the second simulation, we vary the sparsity of the $Q$ matrix, that is, we vary the extent to which the true function is non-additive. We generate four $\Q$ matrices
plotted in Figure \ref{Support}(b), where the diagonal elements are all one and
the off-diagonal elements are $\frac{1}{2}$ with probability $\alpha$
($\alpha=0,0.2,0.5,1$ for the four cases). We fix $p=128$ and choose
$n=100,200,\ldots,1000$. We use independent design by seting $\Sigma = \textrm{I}_p$. We again run the AC/DC optimization on $100$
independently generated data sets and plot the probability of recovery
in Figure \ref{Support}(c). The results demonstrate that AC/DC performs
consistent variable selection even if the true function is not additive (but
still convex).

In the third simulation, we use correlated design and vary the correlation. We let
$\x_i$ be drawn from $\mathcal{N}(\bds{0},\bds{\Sigma})$
instead of $\mathcal{N}(\bds{0},\bds{I}_{p})$, with
$\Sigma_{ij}=\nu^{|i-j|}$. We use the non-additive $\Q$, same as in the second experiment, with $\alpha=0.5$ and fix $p=128$.  The recovery curves for $\nu=0.2, 0.4,
0.6, 0.8$ are shown in Figure \ref{Support}(d). As can be seen, for
design of moderate correlation, AC/DC can still select relevant
variables well.

We next use the Boston housing data rather than simulated data. This data set
contains 13 covariates, 506 samples and one response variable
indicating housing values in suburbs of Boston. The data and detailed description
can be found on the UCI Machine Learning Repository website\footnote{\url{http://archive.ics.uci.edu/ml/datasets/Housing}}. 

We first use all $n=506$ samples (with standardization) in the AC/DC algorithm,
using a set of candidate regularization parameters $\{\lambda^{(t)}\}$
ranging from $\lambda^{(1)} = 0$ (no regularization) to $2$. For each $\lambda^{(t)}$
we obtain a function value matrix $\bds{h}^{(t)}$ with $p=13$
columns. The non-zero columns in this matrix indicate the variables selected
using $\lambda^{(t)}$.  We plot $\|\bds{h}^{(t)}\|_{\infty}$ and
the column-wise mean of $\bds{h}^{(t)}$ versus the normalized norm
$\frac{\|\bds{h}^{(t)}\|_{\infty,1}}{\|\bds{h}^{(1)}\|_{\infty,1}}$
in Figures \ref{Boston}(a) and \ref{Boston}(b).  For comparison, we
plot the LASSO/LARS result in a similar way in Figure \ref{Boston}(c).
From the figures we observe that the first three variables selected by
AC/DC and LASSO are the same: \tts{LSTAT}, \tts{RM} and \tts{PTRATIO},
consistent with previous findings~\citep{SpAM:07}.  The fourth
variable selected by AC/DC is \tts{INDUS} (with $\lambda^{(t)}=0.7$).
We then refit AC/DC with only these four variables without
regularization, and plot the estimated additive functions in Figure
\ref{Boston}(e). When refitting, we constrain a component to be convex if it is non-zero in the AC stage and concave if it is non-zero in the DC stage. As can be seen, these functions contain clear
nonlinear effects which cannot be captured by LASSO. The shapes of
these functions, including the concave shape of the \tts{PTRATIO} function, are in agreement with those obtained by
SpAM~\citep{SpAM:07}. 

Next, in order to quantitatively study the predictive performance, we
run 3 times 5-fold cross validation, following the same procedure
described above---training, variable selection and refitting.  A plot
of the mean and standard deviation of the predictive mean squared
error (MSE) is shown in Figure \ref{Boston}(d). Since for AC/DC the same
regularization level $\lambda^{(t)}$ may lead to a slightly different number of selected
variables in different folds and runs, the values on the $x$-axis
for AC/DC are not necessarily integers. The figure clearly shows that AC/DC has a 
lower predictive MSE than LASSO.  We also compared the performance of
AC/DC with that of Additive Forward Regression (AFR) presented
in~\cite{Xi:09}, and found that they are similar.  The main advantages
of AC/DC compared with AFR and SpAM are that there are no smoothing
parameters required, and the optimization is formulated
as a convex program, guaranteeing a global optimum.


\begin{figure*}[!t]
\begin{center}
\begin{tabular}{cccc}
\hskip-10pt
\includegraphics[width=.26\textwidth]{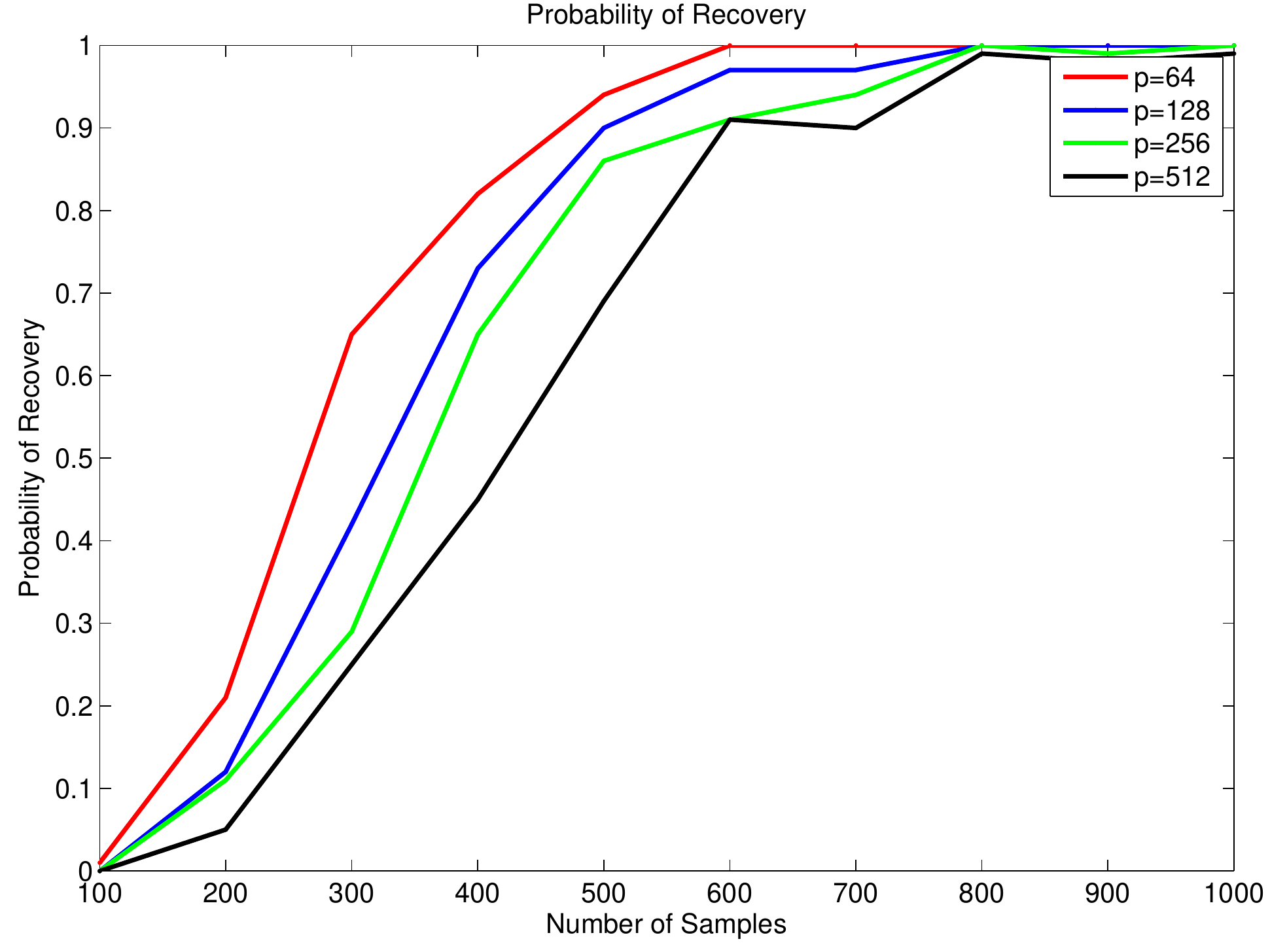} &
\hskip-10pt
\includegraphics[width=.26\textwidth]{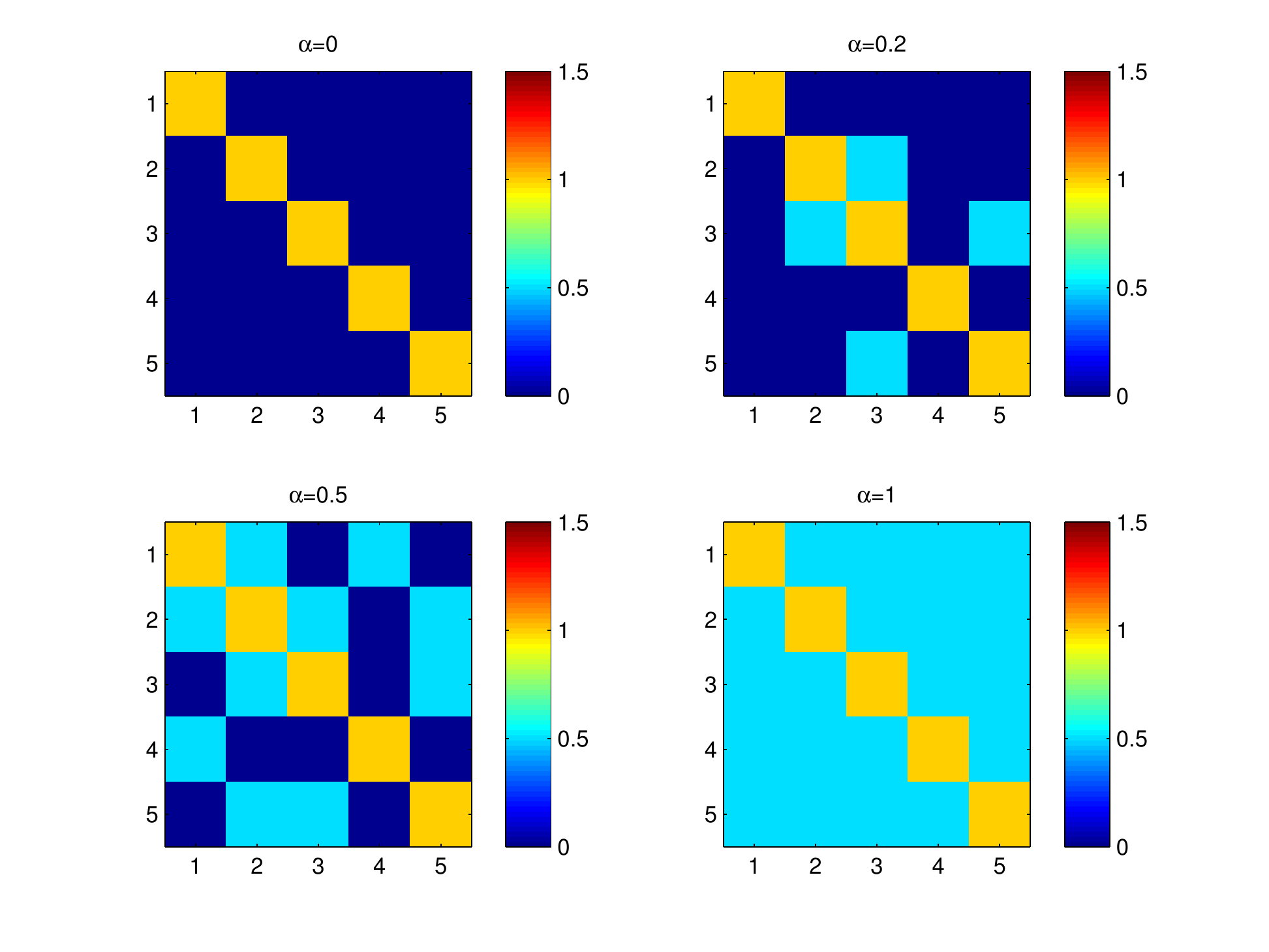} &
\hskip-10pt
\includegraphics[width=.26\textwidth]{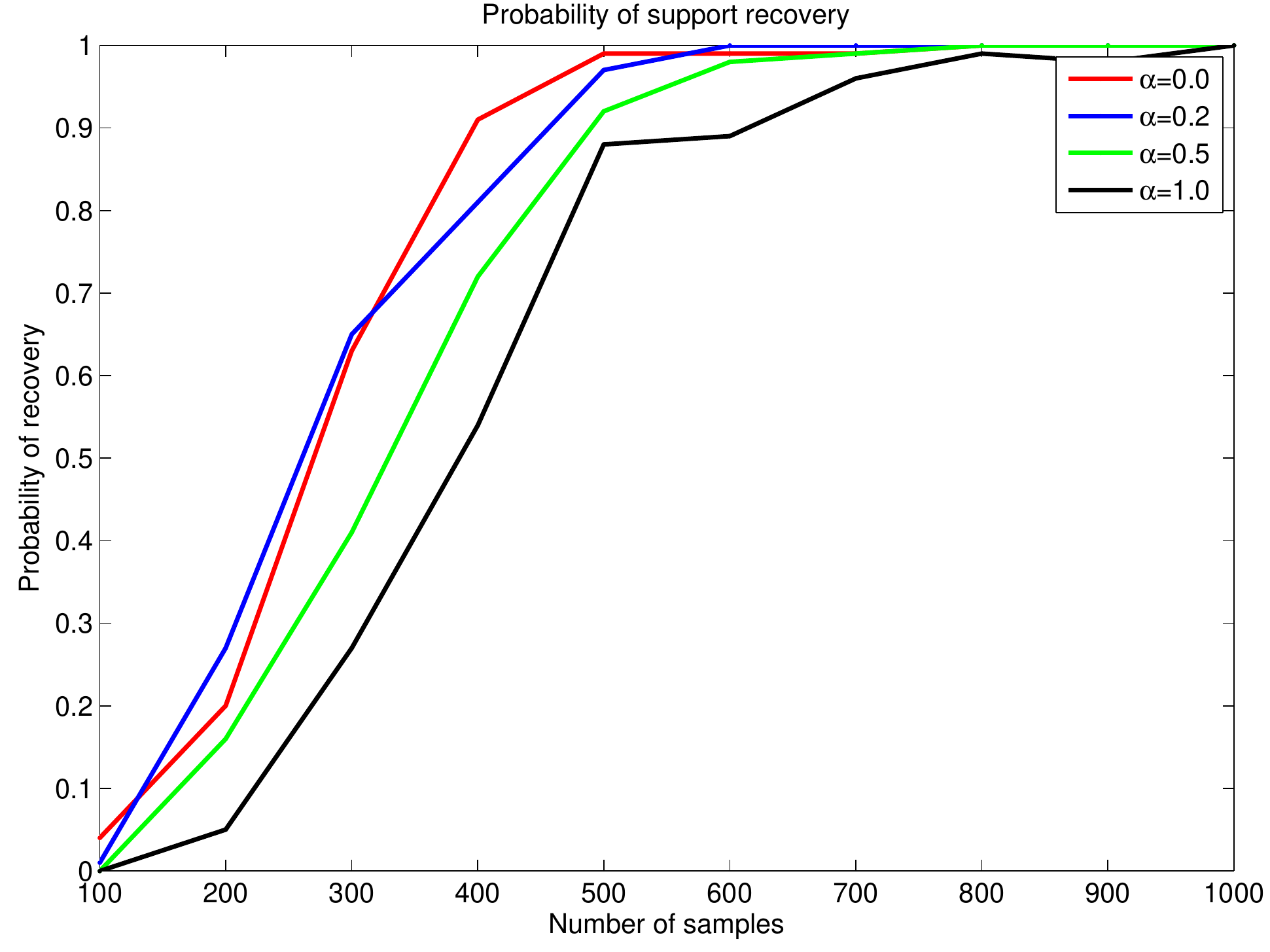} &
\hskip-10pt
\includegraphics[width=.26\textwidth]{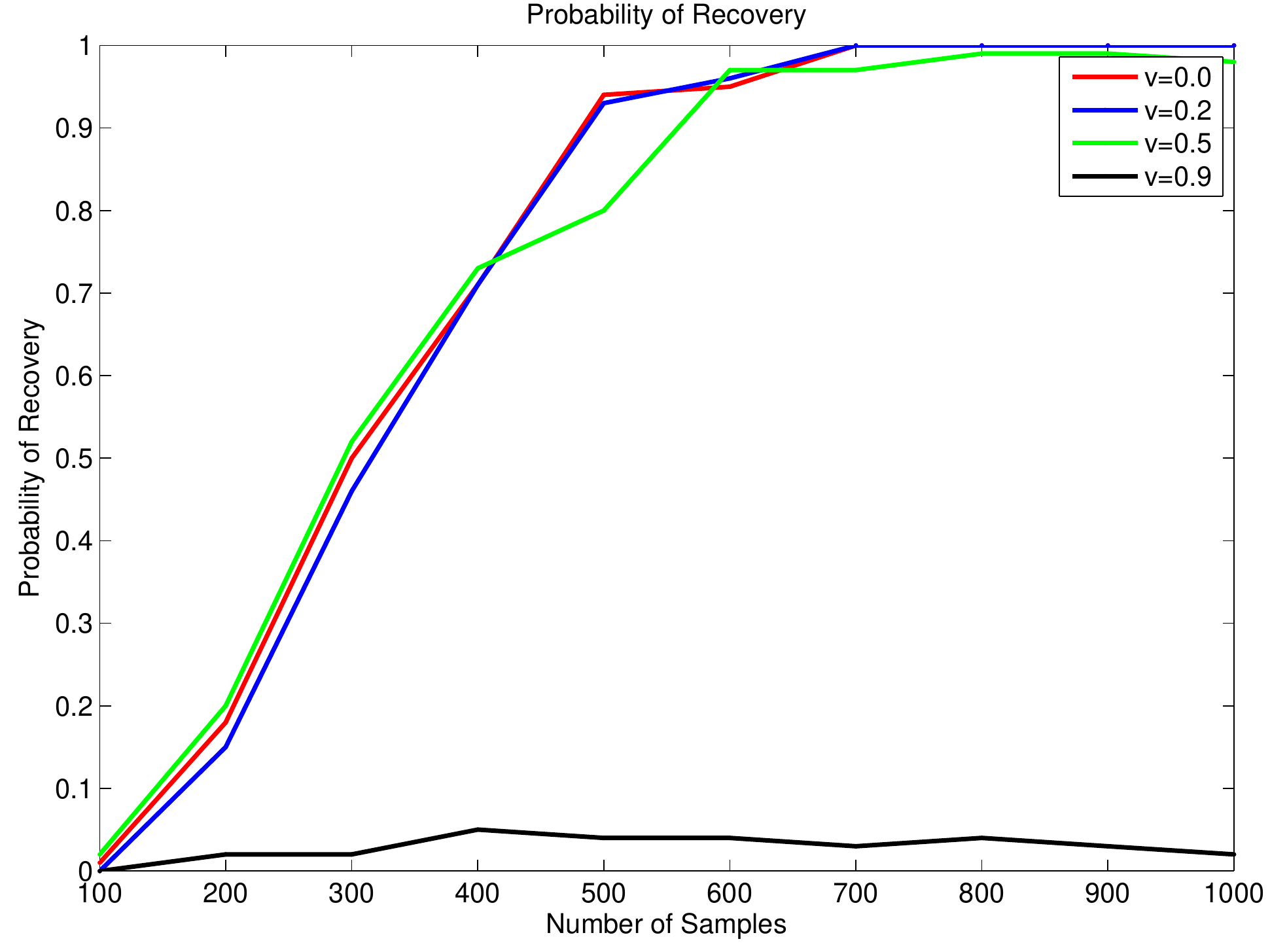}  \\
\hskip-10pt (a) additive model & 
\hskip-10pt (b) four $\Q$ matrices &
\hskip-10pt (c) non-additive models & 
\hskip-10pt (d) correlated design
\end{tabular}
\end{center}
\caption{Support recovery results where the additive assumption is
  correct (a), incorrect (b), (c), and with correlated design (d).}\label{Support}
\vskip10pt

\begin{center}
\begin{tabular}{cc}
  \includegraphics[width=.38\textwidth]{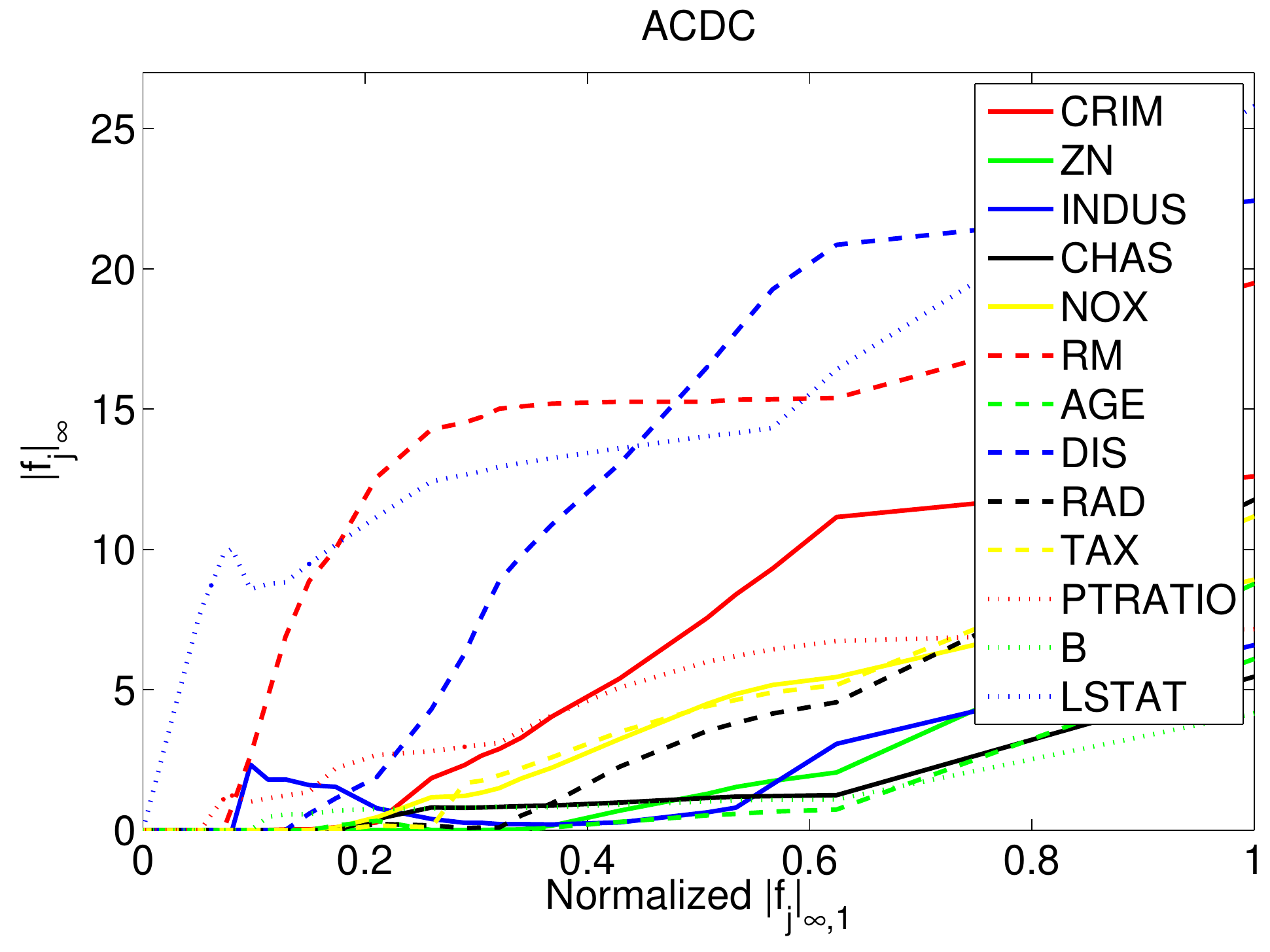} &
  \includegraphics[width=.38\textwidth]{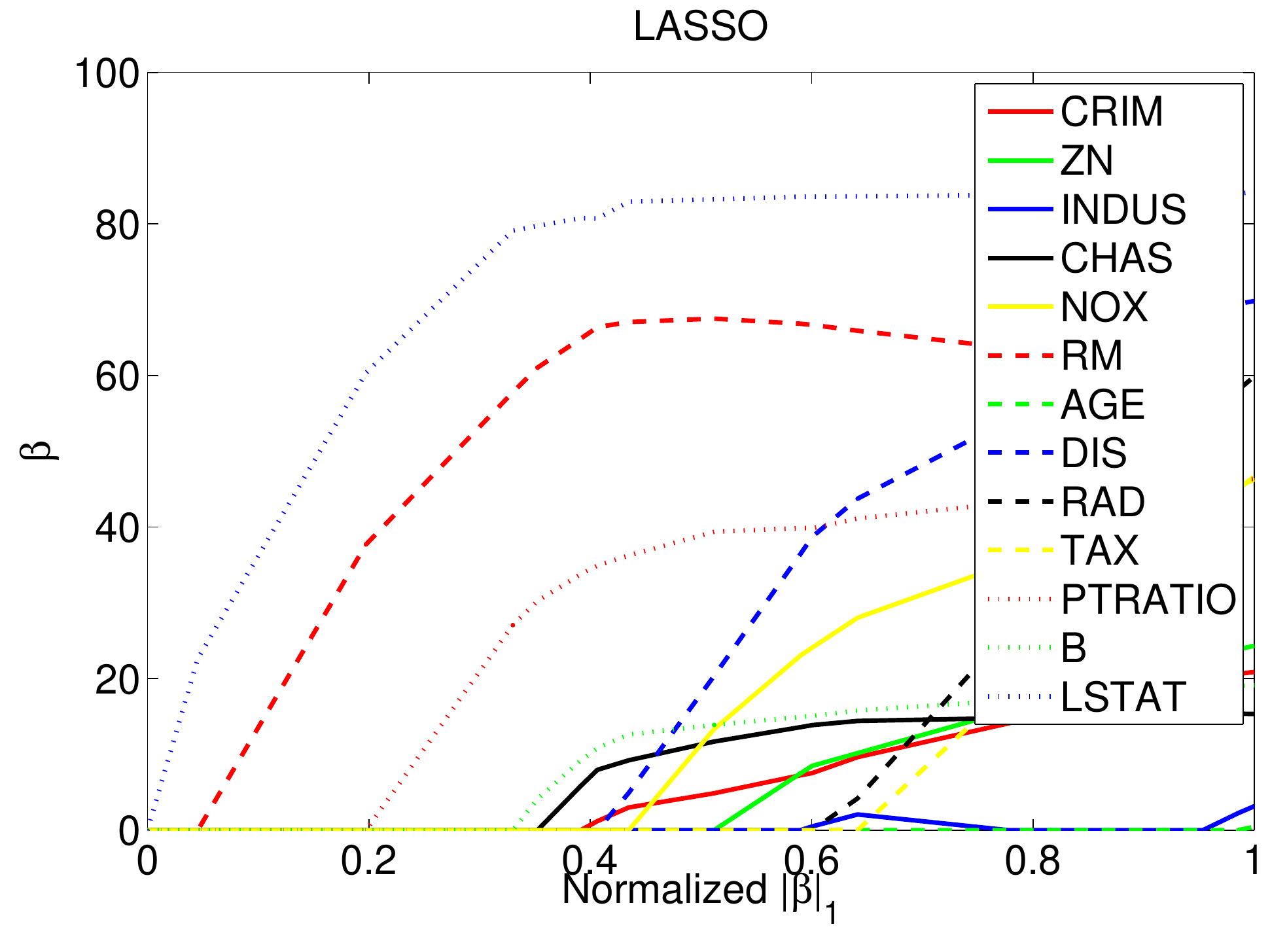} 
\\
AC/DC $\|f_k\|_\infty$ paths & 
LASSO $|\beta_k|$ paths \\
  \includegraphics[width=.38\textwidth]{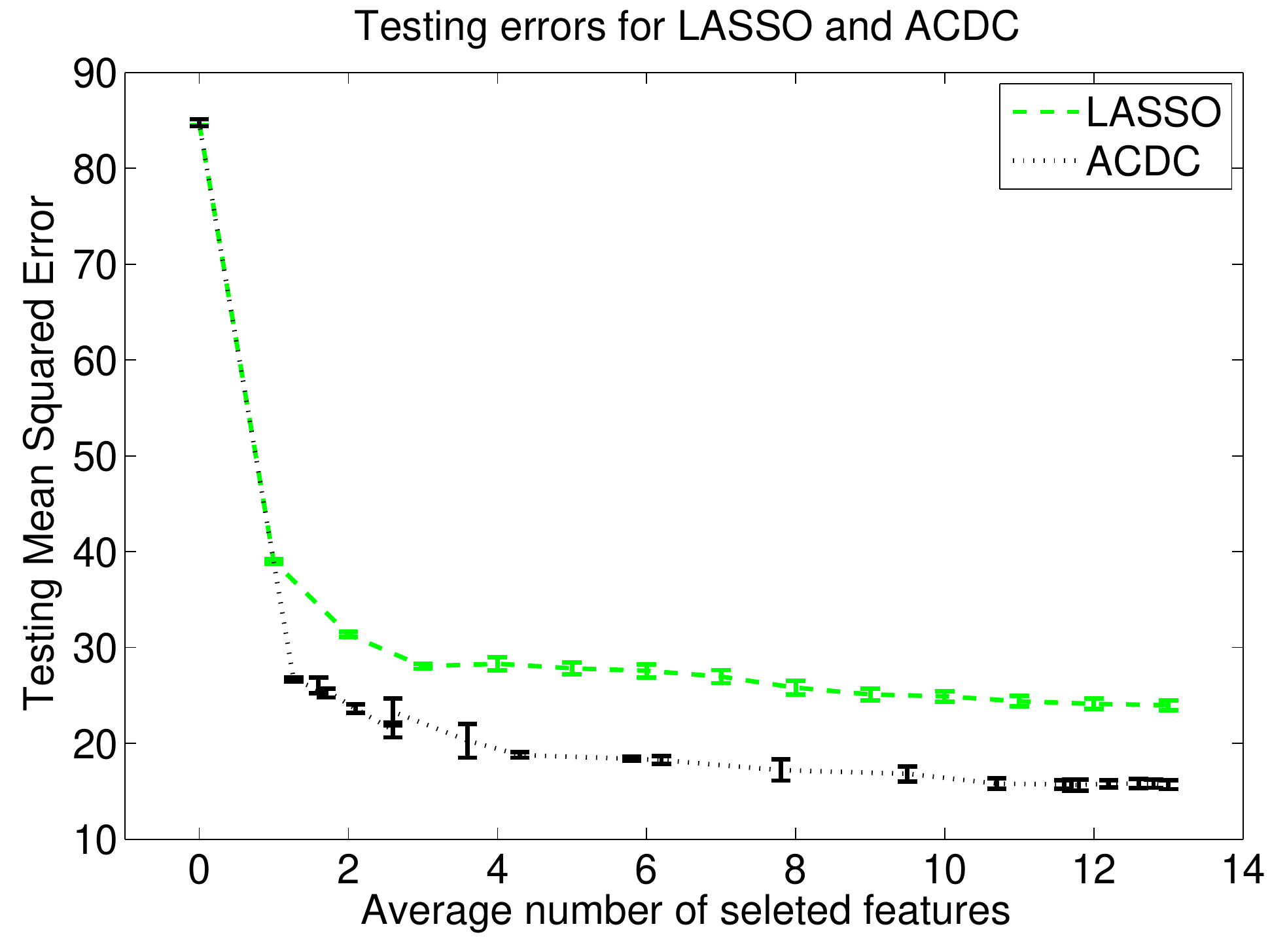} &
  \includegraphics[width=.45\textwidth]{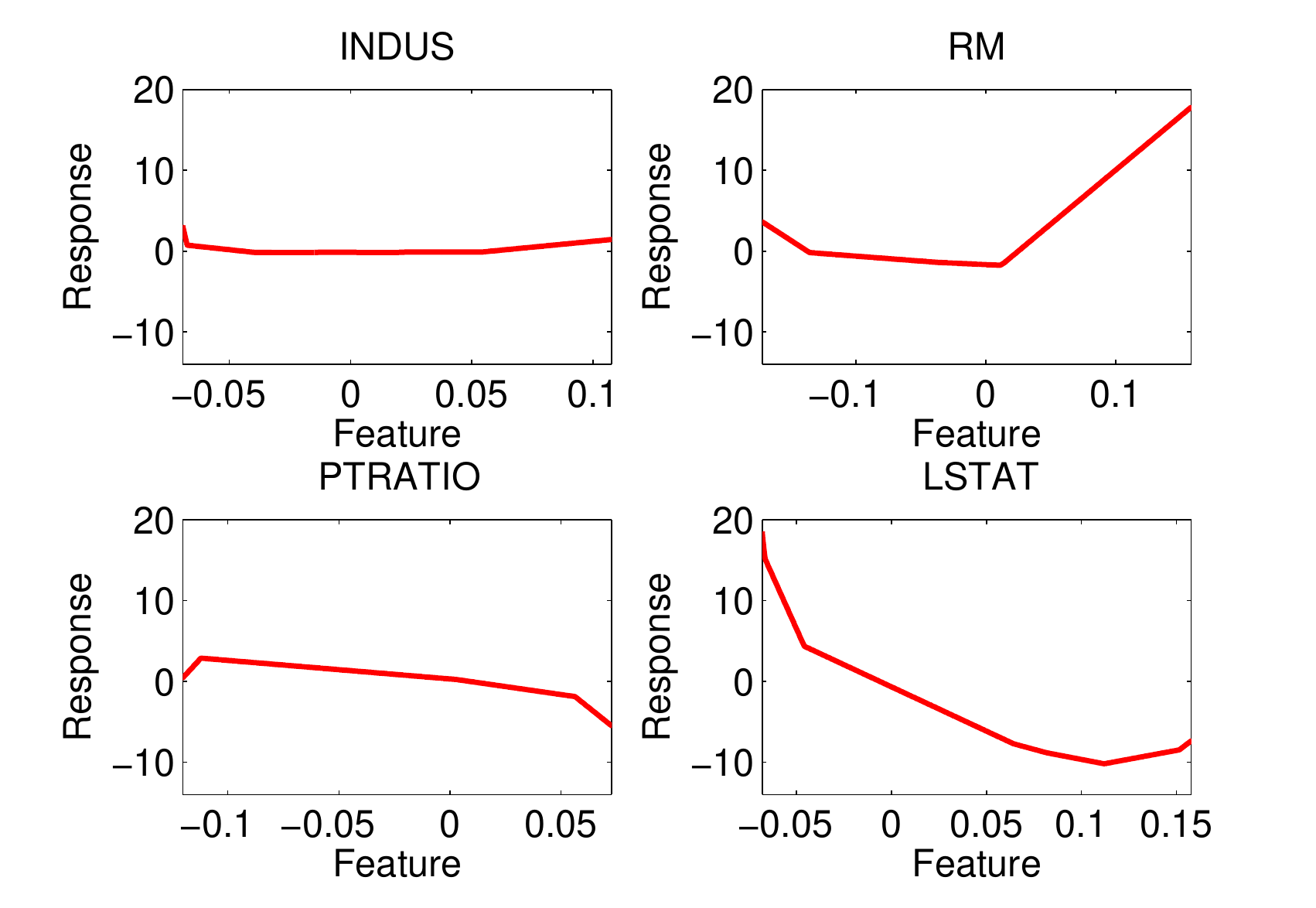}
\\
predictive MSE & estimated functions from AC/DC
\end{tabular}
\end{center}
\caption{Results on Boston housing data, showing regularization paths,
 MSE and fitted functions.}\label{Boston}
\end{figure*}

 

\section{Discussion}

We have introduced a framework for estimating high dimensional but
sparse convex functions.  Because of the special properties of
convexity, variable selection for convex functions enjoys additive
faithfulness---it suffices to carry out variable selection over an
additive model, in spite of the approximation error this introduces.
Sparse convex additive models can be optimized using block coordinate
quadratic programming, which we have found to be effective and
scalable.  We established variable selection consistency results,
allowing exponential scaling in the ambient dimension.  We expect
that the technical assumptions we have used in these analyses can be
weakened; this is one direction for future work.  Another interesting
direction for building on this work is to allow for additive models that
are a combination of convex and concave components.  If the
convexity/concavity of each component function is known, this again
yields a convex program.  The challenge is to develop a method to
automatically detect the concavity or convexity pattern of the
variables.


\section*{Acknowledgements}
Research supported in part by NSF grants IIS-1116730, 
AFOSR grant FA9550-09-1-0373, ONR grant
N000141210762, and an Amazon AWS in Education Machine Learning
Research grant.

\clearpage
\bibliographystyle{imsart-nameyear}
\bibliography{local}

\clearpage

\section{Supplement:  Proofs of Technical Results}

 \subsection{Proof of the Deterministic Condition for Sparsistency}
 \label{sec:deterministic_proof}
 
We restate Theorem~\ref{thm:deterministic} first for convenience. 
The following holds regardless of whether we impose the $B$-boundedness condition (see discussion at beginning of Section~\ref{sec:finitesample} for definition of the $B$-boundedness condition).
 
\begin{theorem} 
Let $\{\hat{d}_k \}_{k \in S}$ be a minimizer of the restricted regression, that is, the solution to optimization (\ref{opt:alternate_opt}) where we restrict $k \in S$. 
Let $\hat{r} \coloneqq Y - \sum_{k \in S} \bar{\Delta}_k \hat{d}_k$ be the restricted regression residual.

Let $\pi_k(i)$ be an reordering of $X_k$ in ascending order so that $X_{k \pi_k(n)}$ is the largest entry. Let $\mathbf{1}_{\pi_k(i:n)}$ be 1 on the coordinates $\pi_k(i),\pi_k(i+1),...,\pi_k(n)$ and 0 elsewhere. Define $\mathsf{range}_k = X_{k\pi_k(n)} - X_{k \pi_k(1)}$.

Suppose for all $k\in S^c$ and for all $i=1,...,n$, $\lambda_n \geq \mathsf{range}_k | \frac{32}{n} \hat{r}^\tran \mathbf{1}_{\pi_k(i:n)}|$, and $\max_{i=1,...,n-1} \frac{X_{k\pi_k(i+1)} - X_{k\pi_k(i)}}{\mathsf{range}_k} \geq \frac{1}{16}$, and $\mathsf{range}_k \geq 1$.

Then the following are true:
\begin{enumerate}
\item Let $\hat{d}_k = 0$ for $k \in S^c$, then \{$\hat{d}_k\}_{k=1,...,p}$ is an optimal solution to optimization~\ref{opt:alternate_opt}. Furthermore, any solution to the optimization program \ref{opt:alternate_opt} must be zero on $S^c$.
\item For all $k \in S^c$, the solution to optimization~\ref{opt:alternate_opt_concave} must be zero and unique.
\end{enumerate}
\end{theorem}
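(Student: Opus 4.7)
I would attack the restated theorem with a primal--dual witness construction, analogous to the standard Lasso argument but adapted to the convexity constraints and $\ell_\infty$ penalty in optimization~\eqref{opt:alternate_opt}. The plan is as follows.

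First, I would write out the Lagrangian and stationarity conditions for~\eqref{opt:alternate_opt}. For each block $d_k$, the subdifferential of $d_k \mapsto \|\bar\Delta_k d_k\|_\infty$ at $d_k=0$ is $\{\bar\Delta_k^\top u_k : \|u_k\|_1 \leq 1\}$, and the convexity constraint $d_{k,\pi_k(j)} \geq 0$ (for $j \geq 2$) introduces multipliers $\mu_{k,\pi_k(j)} \geq 0$ with $\mu_{k,\pi_k(1)} = 0$. Stationarity reads
\begin{equation}
\lambda_n\,\bar\Delta_k^\top u_k \;-\; \mu_k \;=\; \tfrac{1}{n}\bar\Delta_k^\top \Bigl(Y - \textstyle\sum_{k'}\bar\Delta_{k'} d_{k'}\Bigr).
\end{equation}
For $k \in S$ I take $\hat d_k$ from the restricted regression, whose own KKT system is satisfied automatically with residual $\hat r$; for $k \in S^c$ I propose $\hat d_k = 0$. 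Under this choice the full residual coincides with $\hat r$, so certifying optimality reduces to exhibiting, for each $k \in S^c$, a pair $(u_k,\mu_k)$ with $\|u_k\|_1 \leq 1$ and the correct sign pattern on $\mu_k$, satisfying $\lambda_n \bar\Delta_k^\top u_k - \mu_k = \tfrac{1}{n}\bar\Delta_k^\top \hat r$.

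The heart of the argument is an explicit formula for $\bar\Delta_k^\top \hat r$. Writing $u_j$ for the $j$-th smallest value of $X_k$ and $T_\ell = \hat r^\top \mathbf 1_{\pi_k(\ell:n)}$, a telescoping calculation gives
\begin{equation}
\bigl(\bar\Delta_k^\top \hat r\bigr)_j \;=\; \sum_{m=j}^{n-1}(u_{m+1}-u_m)\, T_{m+1} \;-\; c_j\,\mathbf 1^\top \hat r,
\end{equation}
where $c_j$ comes from the centering of $\bar\Delta_k$. The hypothesis $\lambda_n \geq \mathsf{range}_k\cdot|\tfrac{32}{n}\hat r^\top\mathbf 1_{\pi_k(i:n)}|$ then bounds each $|T_\ell|/n$ by a small multiple of $\lambda_n/\mathsf{range}_k$, while the lower bound on $\max_i(u_{i+1}-u_i)/\mathsf{range}_k$ ensures no tail is so compressed that stationarity at consecutive $j$ forces an infeasible sign on $\mu_k$. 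I would then take $u_k$ to be a simple combination supported on the two extremal samples $\pi_k(1)$ and $\pi_k(n)$ (with signs determined by the sign of $\hat r^\top \mathbf 1_{\pi_k(1:n)}$); this produces a row vector $\bar\Delta_k^\top u_k$ that is affine in $u_j$ and dominates the cumulative tail sums, letting me solve for $\mu_{k,\pi_k(1)}=0$ while leaving $\mu_{k,\pi_k(j)} > 0$ for $j \geq 2$.

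For uniqueness on $S^c$, observe that the hypothesis is strict, so the constructed certificate satisfies $\|u_k\|_1 < 1$ strictly, and any other minimizer would have to satisfy the same KKT system; a perturbation argument (using strong convexity of the loss on the image of $\bar\Delta_k$, combined with the strict inequality in the subgradient condition) shows $\hat d_k = 0$ is forced for $k \in S^c$. Finally, for Part 2, the DC optimization~\eqref{opt:alternate_opt_concave} at fixed $k \in S^c$ is structurally identical, with $\Delta_k$ in place of $\bar\Delta_k$ (centering is irrelevant because the concave fits are decoupled) and concavity flipping the sign of $\mu_k$; the same telescoping identity applies to $\Delta_k^\top \hat r$ and the same $\lambda_n$ bound certifies $\hat c_k = 0$ as the unique minimizer.

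The main obstacle I anticipate is carrying out the explicit witness construction in the second paragraph: $\bar\Delta_k^\top u_k$ lies in a lower-dimensional row space of $\bar\Delta_k$, so the joint sign pattern of $\mu_{k,\pi_k(j)}$ is not independently controllable, and one must exploit both the tail-sum structure and the spacing condition $(u_{i+1}-u_i)/\mathsf{range}_k \geq 1/16$ simultaneously. The factor of $32$ in the hypothesis on $\lambda_n$ presumably absorbs exactly the slack needed for this joint sign pattern to be feasible across all $j \in \{2,\ldots,n-1\}$.
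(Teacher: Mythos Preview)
Your overall primal--dual witness framework and the telescoping identity for $\bar\Delta_k^\top\hat r$ are exactly right, and your uniqueness sketch (common fitted values by strong convexity, then strict slack $\|u_k\|_1<1$ via H\"older) matches the paper. The gap is in the explicit witness: a $u_k$ supported only on the two extremal samples $\pi_k(1)$ and $\pi_k(n)$ cannot work. With such support (and after centering, which you may assume since $\bar\Delta_k^\top u = \Delta_k^\top(u-\bar u\mathbf 1)$), one gets $[\Delta_k^\top u_k]_j = u_{k,n}(x_{k\pi_k(n)}-x_{k\pi_k(j)})$, which is \emph{monotone} in $j$. But stationarity at $j=1$ is an \emph{equality} (there is no nonnegativity constraint on $d_{k,\pi_k(1)}$, so $\mu_{k,\pi_k(1)}=0$), and the target sequence $[\tfrac{1}{n}\Delta_k^\top\hat r]_j$ can increase away from $j=1$; a monotone witness pinned to equality at $j=1$ cannot dominate it for $j\geq 2$.

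What the paper does instead is place mass at \emph{three} ordered positions, $\pi_k(1)$, $\pi_k(i^*)$, and $\pi_k(n)$, where $i^*$ is the largest index with $(x_{k\pi_k(n)}-x_{k\pi_k(i^*)})/\mathsf{range}_k\geq 1/2$. The resulting $[\Delta_k^\top u_k]_j$ is a piecewise-affine ``tent'' in $x_{k\pi_k(j)}$: it rises on $\{1,\ldots,i^*\}$ with slope $a-a'=\tfrac18$ and falls on $\{i^*,\ldots,n-1\}$ with slope $a'$. The hypothesis $\lambda_n\geq\mathsf{range}_k\cdot|\tfrac{32}{n}\hat r^\top\mathbf 1_{\pi_k(i:n)}|$ bounds the \emph{increments} $[\tfrac{1}{n}\Delta_k^\top\hat r]_{j}-[\tfrac{1}{n}\Delta_k^\top\hat r]_{j+1}=\mathsf{gap}_{j+1}\cdot\tfrac{1}{n}\mathbf 1_{(j+1):n}^\top\hat r$ by $\tfrac{\lambda_n}{32}\,\mathsf{gap}_{j+1}/\mathsf{range}_k$, so the tent's increments (of size $\tfrac{\lambda_n}{8}\mathsf{gap}_{j+1}$ on the left, and at least $\tfrac{\lambda_n}{32}\mathsf{gap}_{j+1}$ on the right after showing $a'\geq\tfrac{1}{32}$) dominate. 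The gap condition is used here as an \emph{upper} bound on the largest spacing (the $\geq$ in the statement is a typo; the proof uses $\leq\tfrac{1}{16}$): it guarantees that $i^*$ sits near the middle, so that both $a$ and $a'$ can be chosen positive with $|a-a'|+a+a'<1$. Your reading of it as a lower bound preventing ``tail compression'' is not the role it plays.
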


\begin{proof} 
We will omit the $B$-boundedness constraint in our proof here. It is easy to verify that the result of the theorem still holds with the constraint added in.

We begin by considering the first item in the conclusion of the theorem.
We will show that with $\{\hat{d}_k\}_{k=1,..,p}$ as constructed, we
can set the dual variables to satisfy the 
complementary slackness and stationary conditions: $\nabla_{d_k} \mathcal{L}(\hat{d})  = 0$ for all $k$. 

The Lagrangian is
\begin{equation}
\label{eqn:full_lagrange}
\mathcal{L}( \{ d_k \}, \nu) = 
  \frac{1}{2n} \Big\| 
    Y - \sum_{k=1}^p  \bar{\Delta}_k d_k  \Big\|_2^2 + 
    \lambda \sum_{k=1}^p \| \bar{\Delta}_k d_k \|_\infty -
    \sum_{k=1}^p \sum_{i=2}^{n-1} \nu_{ki} d_{ki} 
\end{equation}
with the constraint that $\nu_{ki} \geq 0$ for all $k,i$.

Because $\{\hat{d}_k\}_{k \in S}$ is by definition the optimal solution of the restricted regression, it is a consequence that stationarity holds for $k \in S$, that is, $\partial_{ \{ d_k \}_{k \in S} } \mathcal{L}(d) = 0$, and that the dual variables $\nu_k$ for $k \in S$ satisfy complementary slackness.

We now verify that stationarity holds also for $k \in S^c$. We fix one dimension $k \in S^c$ and let $\hat{r} = Y - \sum_{k' \in S} \bar{\Delta}_{k'} \hat{d}_{k'}$. 
The Lagrangian form of the optimization, in terms of just $d_k$, is
\[
\mathcal{L}(d_k, \nu_k) =
  \frac{1}{2n} \big\| Y - \sum_{k' \in S} \bar{\Delta}_{k'} d_{k'} 
  -  \bar{\Delta}_k d_k \big\|_2^2 
   + \lambda \| \bar{\Delta}_k d_k\|_\infty
  - \sum_{i=2}^{n-1} \nu_{ki} d_{ki}
\]
with the constraint that $v_{ki} \geq 0$ for $i=2,...,n-1$. 

The derivative of the Lagrangian is
\begin{align*}
\partial_{d_k} \mathcal{L}(d_k) =  -\frac{1}{n} \bar{\Delta}_k^\tran ( Y - \sum_{k'\in S} \bar{\Delta}_{k'} d_{k'}  - \bar{\Delta}_k d_k )
        + \lambda \bar{\Delta}_k^\tran \mathbf{u}
      - \left( \begin{array}{c} 0 \\ \nu_k \end{array} \right)
\end{align*}
where $\mathbf{u}$ is the subgradient of $\| \bar{\Delta}_k d_k \|_\infty$. If $\bar{\Delta}_k d_k = 0$, then $\mathbf{u}$ can be any vector whose $L_1$ norm is less than or equal to $1$. $\nu_k \geq 0$ is a vector of Lagrangian multipliers. $\nu_{k1}$ does not exist because $d_{k1}$ is not constrained to be non-negative.

We now substitute in $d_{k'} = \hat{d}_{k'}$ for $k' \in S$, $d_k = 0$ for $k \in S$, and $r = \hat{r}$ and show that the $\mathbf{u}, \nu_k$ dual variables can be set in a way to ensure that stationarity:
\begin{align*}
\partial_{d_k} \mathcal{L}(\hat{d}_k) = -\frac{1}{n} \bar{\Delta}_k^\tran\hat{r} + \lambda \bar{\Delta}_k^\tran \mathbf{u}
           - \left( \begin{array}{c} 0 \\ \nu_k \end{array} \right) = 0 
\end{align*}
where $\| \mathbf{u} \| \leq 1$ and $\nu_k \geq 0$. It clear that to show stationarity, we only need to show that $[-\frac{1}{n} \bar{\Delta}_k^\tran \hat{r} + \lambda \bar{\Delta}_k^\tran \mathbf{u}]_j = 0$ for $j=1$ and $\geq 0$ for $j=2,...,n-1$.

To ease notational burden, let us reorder the samples in ascending order so that the $i$-th sample is the $i$-th smallest sample. We will from here on write $X_{ki}$ to denote $X_{k \pi_k(i)}$.

Define $i^*$ as the largest index such that $\frac{X_{kn} - X_{ki^*}}{X_{kn} - X_{k1}} \geq 1/2$. 
We will construct $\mathbf{u} = (a - a', 0, ..., -a, 0,..., a')$ where
$a,a'$ are positive scalars, where $-a$ lies at the $i^*$-th coordinate, and where the coordinates of $\mathbf{u}$ correspond
to the new sample ordering. 

We define

\begin{align*}
\kappa &= \frac{1}{\lambda n} \big[ \Delta_k^\tran \hat{r} \big]_1 \\
a' &= \frac{X_{kn} - X_{k1}}{X_{kn} -  X_{ki^*}} \kappa + 
     \frac{X_{ki^*}-X_{k1}}{X_{kn} - X_{k i^*}} \frac{1}{8} \\
a &= \frac{X_{kn} - X_{k1}}{X_{kn} -  X_{ki^*}} \kappa + 
     \frac{X_{kn}-X_{k1}}{X_{kn} - X_{ki^*}} \frac{1}{8} \\
\end{align*}

and we verify two facts: first that the KKT stationarity is satisfied and second, that $\| \mathbf{u} \|_1 < 1$ with high probability. Our claim is proved immediately by combining these two facts.

Because $\hat{r}$ and $\mathbf{u}$ are both centered vectors, $\bar{\Delta}_k^\tran \hat{r} = \Delta_k^\tran \hat{r}$ and likewise for $\mathbf{u}$. Therefore, we need only show that for $j=1$, $\lambda \big[ \Delta_k^\tran \mathbf{u} \big]_j = \big[ \frac{1}{n} \Delta_k^\tran \hat{r} \big]_j$ and that for $j = 2,..., n-1$, $\lambda \big[ \Delta_k^\tran \mathbf{u} \big]_j \geq \big[ \frac{1}{n} \Delta_k^\tran \hat{r} \big]_j$.

With our explicitly defined form of $\mathbf{u}$, we can characterize
\begin{align}
\big[ \Delta_k^\tran \mathbf{u} \big]_j = 
  \left \{ \begin{array}{cc} 
   (-a + a')(X_{k i^*} - X_{kj})+ a'(X_{kn} - X_{k i^*})
   & \trm{ if } j \leq i^* \\
   a'(X_{kn} - X_{kj}) & \trm{ if } j \geq i^* 
     \end{array} \right.
\end{align} 

It is straightforward to check that $\big[ \lambda \Delta_k^\tran \mathbf{u} \big]_1 = \lambda \kappa = \frac{1}{n} \big[ \Delta_k^\tran \hat{r} \big]_1$.

To check that other rows of stationarity condition holds, we characterize $[\frac{1}{n} \Delta_k^\tran \hat{r}]_j$:
\begin{align*}
[\frac{1}{n} \Delta_k^\tran \hat{r}]_j &= \frac{1}{n} \sum_{i > j} (X_{ki} - X_{kj}) \hat{r}_i \\
  & = \frac{1}{n} \sum_{i > j} \sum_{j < i' \leq i} \mathsf{gap}_{i'} \hat{r}_i \\
 & = \frac{1}{n} \sum_{i' > j} \mathsf{gap}_{i'} \sum_{i \geq i'} \hat{r}_i \\
 & = \frac{1}{n} \sum_{i' > j} \mathsf{gap}_{i'} \mathbf{1}_{i':n}^\tran \hat{r} 
\end{align*}
where we denote $\mathsf{gap}_{i'} = X_{ki'} - X_{k(i'-1)}$.

We pause for a second here to give a summary of our proof strategy. We leverage two critical observations: first, any two adjacent coordinates in the vector $\frac{1}{n} \Delta_k^\tran \hat{r}$ cannot differ by too much. Second, we defined $a, a'$ such that $-a+a' = -\frac{1}{8}$ so that $\lambda \Delta_k^\tran \mathbf{u}$ is a sequence that strictly increases in the first half (for coordinates in $\{1,...,i^*\}$) and strictly decreases in the second half. 

We know $\frac{1}{n} \Delta_k^\tran \hat{r}$ and $\lambda \Delta_k^\tran \mathbf{u}$ are equal in the first coordinate. We will show that the second sequence increases faster than the first sequence which will imply that the second sequence is larger than the first in the first half of the coordinates. We will then work similarly but backwards for the second half.

Following our strategy, We first compare $[\lambda \Delta_k^\tran \mathbf{u}]_j$ and $[\frac{1}{n} \Delta_k^\tran \hat{r}]_j$ for $j=1,..., i^*-1$.

For $j = 1,..., i^*-1$, we have that
\begin{align*}
\lambda [ \Delta_k^\tran \mathbf{u}]_{j+1} - \lambda [ \Delta_k^\tran \mathbf{u}]_{j} &= \lambda (a - a') \mathsf{gap}_{j+1} \\
 &\geq -\mathsf{gap}_{j+1} \frac{1}{n} \mathbf{1}_{(j+1):n}^\tran \hat{r} \\
& = [ \frac{1}{n} \Delta_k^\tran \hat{r} ]_{j+1} - [ \frac{1}{n} \Delta_k^\tran \hat{r} ]_{j}
\end{align*}

The inequality follows because $a - a' = \frac{1}{8}$ and thus $\lambda (a - a') \geq \left| \frac{1}{n} \mathbf{1}_{(j+1):n}^\tran \hat{r} \right|$. Therefore, for all $j = 1,...,i^*$:
\begin{align*}
[ \lambda \Delta_k^\tran \mathbf{u}]_j &\geq [ \frac{1}{n} \Delta_k^\tran \hat{r} ]_j
\end{align*}

For $j \geq i^*$, we start our comparison from $j=n-1$. First, we claim that $a' > \frac{1}{32}$. To prove this claim, note that
\begin{align}
|\kappa| &= \Big|\frac{1}{\lambda n} \sum_{i' > 1} \mathsf{gap}_{i'} \mathbf{1}_{i':n}^\tran \hat{r}\Big| \leq \frac{1}{X_{kn} - X_{k1}}\frac{1}{32} \sum_{i' > 1} \mathsf{gap}_{i'} 
 = \frac{1}{32} 
\end{align}
and that
\begin{align*}
\frac{X_{kn} - X_{ki^*}}{X_{kn} - X_{k1}} &= \frac{X_{kn} - X_{k(i^*+1)} + X_{k(i^*+1)} - X_{ki^*}}{X_{kn} - X_{k1}} \leq \frac{1}{2} + \frac{1}{16}
\end{align*}
where the inequality follows because we had assumed that $\frac{X_{k(i+1)} - X_{ki}}{X_{k(n)} - X_{k(1)}} \leq \frac{1}{16}$ for all $i = 1,...,n-1$.

So, we have 
\begin{align*}
a' &= \frac{X_{kn} - X_{k1}}{X_{kn} - X_{ki^*}} \kappa 
   + \frac{ X_{ki^*} - X_{k1}}{X_{kn} - X_{ki^*}} \frac{1}{8} \\
 &= \frac{X_{kn} - X_{k1}}{X_{kn} - X_{ki^*}} \left(
  \kappa + \frac{X_{ki^*} - X_{k1}}{X_{kn} - X_{k1}} \frac{1}{8} \right) \\
&\geq \frac{X_{kn} - X_{k1}}{X_{kn} - X_{ki^*}} \left(
  -\frac{1}{32} + (\frac{1}{2} - \frac{1}{16}) \frac{1}{8} \right) \\
&\geq \frac{1}{1/2 + 1/16} \left(
  -\frac{1}{32} + (\frac{1}{2} - \frac{1}{16}) \frac{1}{8} \right) \\
&\geq \frac{1}{32}
\end{align*}
In the first inequality of the above derivation, we used the fact that $\frac{X_{ki^*} - X_{k1}}{X_{kn} - X_{k1}} \leq \frac{1}{2} - \frac{1}{16}$. In the second inequality, we used the fact that the quantity inside the parentheses is positive and $\frac{X_{kn} - X_{k1}}{X_{kn} - X_{ki^*}} \geq \frac{1}{1/2 + 1/16}$.

Now consider $j=n-1$. 
\[
[ \frac{1}{n} \Delta_k^\tran \hat{r} ]_{n-1} = \frac{1}{n} \mathsf{gap}_n \hat{r}_n 
 \leq \mathsf{gap}_n \frac{\lambda }{32} \leq \lambda \mathsf{gap}_n a' = \lambda [\Delta_k^\tran \mathbf{u}]_{n-1} 
\]

For $j = i^*, ..., n-2$, we have that 
\begin{align*}
\lambda [\Delta_k^\tran \mathbf{u} ]_j - \lambda [\Delta_k^\tran \mathbf{u}]_{j+1} &= 
 \lambda a' \mathsf{gap}_{j+1}  \\
 &\geq \mathsf{gap}_{j+1} \frac{1}{n} \mathbf{1}_{(j+1):n}^\tran \hat{r}  \\
 &\geq  [\frac{1}{n} \Delta_k^\tran \hat{r}]_j - [\frac{1}{n} \Delta_k^\tran \hat{r}]_{j+1}
\end{align*}

Therefore, for $j = i^*,...,n-2$,
\begin{align*}
\lambda [\Delta_k^\tran \mathbf{u}]_j \geq \frac{1}{n} [ \Delta_k^\tran \hat{r}]_j
\end{align*}

We conclude then that $\lambda [\Delta_k^\tran \mathbf{u} ]_j \geq [\frac{1}{n} \Delta_k^\tran \hat{r}]_j$ for all $j = 2,...,n-1$. 

We have thus verified that the stationarity equations hold and now will bound $\| \mathbf{u} \|_1$.

\begin{align*}
\| \mathbf{u} \|_1 = | a - a'| + a + a' \leq \frac{1}{8} + 2 a \leq \frac{1}{8} + 4 |\kappa| + \frac{1}{2}  \leq \frac{1}{8} + \frac{1}{8} + \frac{1}{2} < 1
\end{align*}
In the third inequality, we used the fact that $|\kappa| \leq \frac{1}{32}$.

We have thus proven that there exists one solution $\{ \hat{d}_k
\}_{k=1,...,p}$ such that $\hat{d}_k = 0$ for all $k \in
S^c$. Furthermore, we have shown that the subgradient variables
$\mathbf{u}_k$ of the solution $\{ \hat{d}_k \}$ can be chosen such
that $\| \mathbf{u}_k \|_1 < 1$ for all $k \in S^c$.  

We now prove that if $\{ \hat{d}'_k \}_{k = 1,..., p}$ is another
solution, then it must be that $\hat{d}'_k = 0$ for all $k \in S^c$ as
well.  We first claim that $\sum_{k=1}^p \bar{\Delta}_k \hat{d}_k =
\sum_{k=1}^p \bar{\Delta}_k \hat{d}'_k$. If this were not true, then a
convex combination of $\hat{d}_k, \hat{d}'_k$ would achieve a strictly
lower objective on the quadratic term. More precisely, let $\zeta \in
[0,1]$. If $\sum_{k=1}^p \bar{\Delta}_k \hat{d}'_k \neq \sum_{k=1}^p
\bar{\Delta}_k \hat{d}_k$, then $\| Y - \sum_{k=1}^p \bar{\Delta}_k
\big( \hat{d}_k + \zeta ( \hat{d}'_k - \hat{d}_k) \big) \|_2^2$ is
strongly convex as a function of $\nu$. Thus, it cannot be that
$\hat{d}_k$ and $\hat{d}'_k$ both achieve optimal objective, and we
have reached a contradiction.

Now, we look at the stationarity condition for both $\{ \hat{d}_k \}$
and $\{ \hat{d}'_k \}$. Let $\mathbf{u}_k \in \partial \|
\bar{\Delta}_k \hat{d}_k \|_\infty$ and let $\mathbf{u}'_k
\in \partial \| \bar{\Delta}_k \hat{d}'_k \|_\infty$ be the two sets
of subgradients. Let $\{ \nu_{ki} \}$ and $\{
\nu'_{ki} \}$ be the two sets of positivity dual
variables, for $k=1,..,p$ and $i=1,...n-1$.  Note that since there is no positivity constraint on
$d_{k1}$, we let $\nu_{k1} = 0$ always.

Let us define $\bar{\Delta}$, a $n \times p(n-1)$ matrix, to denote the column-wise concatenation of $\{ \bar{\Delta}_k \}_k$ and $\hat{d}$, a $p(n-1)$ dimensional vector, to denote the concatenation of $\{ \hat{d}_k \}_k$. With this notation, we can express $\sum_{k=1}^p \bar{\Delta}_k \hat{d}_k = \bar{\Delta} \hat{d}$.

Since both solutions $(\hat{d}, \mathbf{u}, \nu)$ and $(\hat{d}',
\mathbf{u}', \nu')$ must satisfy the stationarity condition, we have
that
\[
\bar{\Delta}^\tran ( Y - \bar{\Delta} \hat{d} ) 
   + \lambda \sum_{k=1}^p \bar{\Delta}_k^\tran \mathbf{u}_k - \nu = 
\bar{\Delta}^\tran ( Y - \bar{\Delta} \hat{d}' ) 
   + \lambda \sum_{k=1}^p \bar{\Delta}_k^\tran \mathbf{u}'_k - \nu' = 0.
\] 
Multiplying both sides of the above equation by $\hat{d}'$,
\[
\hat{d}'^{\tran}  \bar{\Delta}^\tran ( Y - \bar{\Delta} \hat{d} ) 
    + \lambda \sum_{k=1}^p \hat{d}'^\tran_k \bar{\Delta}_k^\tran \mathbf{u}_k - \hat{d}'^\tran \nu = \hat{d}'^{\tran}  \bar{\Delta}^\tran ( Y - \bar{\Delta} \hat{d}' ) 
    + \lambda \sum_{k=1}^p \hat{d}'^\tran_k \bar{\Delta}_k^\tran \mathbf{u}'_k - \hat{d}'^\tran \nu'.
\]
Since $\bar{\Delta} \hat{d}_k = \bar{\Delta} \hat{d}$, $\hat{d}'^\tran \nu' = 0$ (complementary slackness), and $\hat{d}'^\tran_k \bar{\Delta}_k^\tran \mathbf{u}'_k  = \| \hat{f}'_k \|_\infty$ (where $\hat{f}'_k = \bar{\Delta}_k \hat{d}'_k$), we have that
\[
\lambda \sum_{k=1}^p \hat{d}'^\tran_k \bar{\Delta}_k^\tran \mathbf{u}_k - \hat{d}'^\tran \nu = \lambda \sum_{k=1}^p \| \hat{f}'_k \|_\infty.
\]
On one hand, $\hat{d}'$ is a feasible solution so $\hat{d}'^\tran \nu \geq 0$ and so 
\[
\sum_{k=1}^p \hat{d}'^\tran_k \bar{\Delta}_k^\tran \mathbf{u}_k \geq \sum_{k=1}^p \| \hat{f}'_k \|_\infty .
\]
On the other hand, by H\"older's inequality,
\begin{align*}
\sum_{k=1}^p \hat{d}'^\tran_k \bar{\Delta}_k^\tran \mathbf{u}_k &\leq 
   \sum_{k=1}^p \| \hat{f}'_k \|_\infty \|\mathbf{u}_k \|_1 .
\end{align*}
Since $\mathbf{u}_k$ can be chosen so that $\| \mathbf{u}_k \|_1 < 1$ for all $k \in S^c$, we would get a contradiction if $\| \hat{f}'_k \|_\infty > 0$ for some $k \in S^c$. We thus conclude that $\hat{d}'$ must follow the same sparsity pattern.

The second item in the theorem concerning optimization~\ref{opt:alternate_opt_concave} is proven in exactly the same way. 
The Lagrangian of optimization~\ref{opt:alternate_opt_concave} is
\[
\mathcal{L}_{\trm{cave}}(d_k, \nu_k) = 
  \frac{1}{2n} \big\| \hat{r} - \bar{\Delta}_k d_k \big \|_2^2 + 
  \lambda \| \bar{\Delta}_k d_k \|_\infty + \sum_{k=1}^p \sum_{i=2}^{n-1} \nu_{ki} d_{ki}.
\]
with $\nu_{ki} \geq 0$.
The same reasoning applies to show that $\hat{d}_k = 0$ satisfies KKT conditions sufficient for optimality.
\end{proof}

 \subsection{Proof of False Positive Control}
 \label{sec:false_positive_proof}
 
 We note that in the following analysis the symbols $c,C$ represent
 absolute constants. We will often abuse notation and ``absorb'' new
 absolute constants into $c, C$; the actual value of $c, C$ could thus
 vary from line to line.
We first restate the theorem for convenience. 

\begin{theorem} 
Suppose assumptions A1-A5 hold. Define $\tilde{\sigma} \equiv \max(\sigma, B)$. Suppose that $p \leq O\big( \exp( c n) \big)$ and $n \geq C$ for some constants $0<c<1$ and $C$. Define $\mathsf{range}_k = X_{k\pi_k(n)} - X_{k\pi_k(1)}$.

If $\lambda_n \geq 2 (8 \cdot 32) s \tilde{\sigma}  \sqrt{ \frac{1}{n} \log^2 np}$ then, with probability at least $ 1 - \frac{24}{n}$, for all $k \in S^c$, and for all $i'=1,...,n$
\begin{align*}
& \lambda_n > \mathsf{range}_k \Big| \frac{32}{n}\hat{r}^\tran \mathbf{1}_{\pi_k(i':n)} \Big|  
\end{align*}
and $\max_{i'} \frac{X_{k\pi_k(i'+1)} - X_{k \pi_k(i')}}{\mathsf{range}_k} \leq \frac{1}{16}$ and $\mathsf{range}_k \geq 1$.

Therefore, for all $k \in S^c$, both the AC solution $\hat{f}_k$ from optimization~\ref{opt:alternate_opt}, and the DC solution $\hat{g}_k$ from optimization~\ref{opt:alternate_opt_concave} are zero. 
\end{theorem}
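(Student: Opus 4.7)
The plan is to verify the three deterministic preconditions of Theorem~\ref{thm:deterministic} (the residual-indicator bound, the max-gap bound, and $\mathsf{range}_k \geq 1$) each with probability $1-O(1/n)$, then union-bound and invoke the deterministic theorem. The key structural observation is that assumption A1 makes the restricted residual $\hat{r} = Y - \sum_{k\in S} \bar{\Delta}_k \hat{d}_k$ a measurable function of $(X_S, W)$ alone, hence independent of $X_k$ for every $k\in S^c$. This independence is what unlocks a clean concentration argument.

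The main step is controlling $\frac{1}{n}\hat{r}^{\tran} \mathbf{1}_{\pi_k(i':n)}$ uniformly over $k\in S^c$ and $i' \in \{1,\ldots,n\}$. Condition on $\hat{r}$; by exchangeability of $X_k^{(1)},\ldots,X_k^{(n)}$ and the independence just noted, the index set $\{\pi_k(i'),\ldots,\pi_k(n)\}$ is uniform over subsets of size $m=n-i'+1$, so $\hat{r}^{\tran}\mathbf{1}_{\pi_k(i':n)}$ equals (in distribution, conditionally) the sum of $m$ coordinates of $\hat{r}$ drawn without replacement. The mean is zero because $\bar{Y}=0$ and every column of $\bar{\Delta}_k$ sums to zero, so $\mathbf{1}^{\tran}\hat{r}=0$. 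Serfling's Hoeffding-type inequality for sampling without replacement then gives, conditional on $\hat{r}$,
\[
\Pr\!\left(\Bigl|\tfrac{1}{n}\hat{r}^{\tran}\mathbf{1}_{\pi_k(i':n)}\Bigr| \geq t \,\Bigm|\, \hat{r}\right) \leq 2\exp\!\left(-\frac{c\,n\,t^2}{\|\hat{r}\|_\infty^2}\right).
\]
Choosing $t = C\|\hat{r}\|_\infty \sqrt{\log(np)/n}$ and taking a union bound over at most $np$ events yields the desired uniform control. To finish this step I would bound $\|\hat{r}\|_\infty$: writing $\hat{r}_i = f_0(X_i) + W_i - \sum_{k\in S}\hat{f}_k(X_{ki})$, the $B$-boundedness constraint in the optimization gives $\sum_k \|\hat{f}_k\|_\infty \leq sB$, and A3 gives $\|f_0\|_\infty \leq sB$, while sub-Gaussianity in A4 yields $\max_i |W_i| \leq \sigma\sqrt{2\log(4n)}$ with probability at least $1-1/(2n)$. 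Combining, $\|\hat{r}\|_\infty \leq C s\tilde{\sigma}\sqrt{\log n}$ w.h.p., so the Serfling bound becomes $O(s\tilde{\sigma}\sqrt{\log^2(np)/n})$, which together with $\mathsf{range}_k \leq 2b = O(1)$ is dominated by $\lambda_n/32$ under the stated hypothesis on $\lambda_n$.

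The remaining two conditions are standard extreme-value estimates for i.i.d.\ samples from a density bounded below by $c_1>0$ on $[-b,b]$: the event $\{X_{k,\max}-X_{k,\min}<1\}$ has probability at most $2\exp(-c_1 n(2b-1)/2)$, and a classical max-gap bound for bounded-below densities gives $\max_{i'}(X_{k,\pi_k(i'+1)}-X_{k,\pi_k(i')}) \leq C\log(n)/(c_1 n)$ with probability at least $1-1/n^2$, which is much smaller than $\mathsf{range}_k/16$ once $n$ exceeds a constant. A union bound over $k\in S^c$ (at most $p = O(e^{cn})$ many, which is absorbed by the exponentially small failure probabilities) completes the argument. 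The main obstacle I anticipate is bookkeeping: ensuring that the $B$-boundedness constraint is used correctly to bound $\|\hat{r}\|_\infty$ in terms of $s\tilde{\sigma}$ (not $sB$ separately), and that the Serfling bound, which degrades as $m \to n$ only through the finite-population correction $1-m/n$, is applied with a union bound that handles both small $m$ (where a trivial bound $\tfrac{m}{n}\|\hat{r}\|_\infty$ suffices) and moderate $m$ uniformly. Once the three events are established with total failure probability $\leq 24/n$, Theorem~\ref{thm:deterministic} immediately gives $\hat{f}_k=0$ and $\hat{g}_k=0$ for every $k\in S^c$.
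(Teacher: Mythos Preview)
Your proposal is correct and follows essentially the same route as the paper: independence of $\hat{r}$ and $X_k$ via A1, Serfling's inequality for sampling without replacement to control the partial sums, a bound on $\|\hat{r}\|_\infty$ from boundedness plus sub-Gaussian tails, and a density-lower-bound argument for the gap and range conditions.

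Two small divergences are worth flagging. First, you assert $\mathbf{1}^{\tran}\hat{r}=0$ from $\bar{Y}=0$; the paper does \emph{not} assume $\bar{Y}=0$ in this proof but instead inserts a separate Step~2 bounding $|\tfrac{1}{n}\hat{r}^{\tran}\mathbf{1}| = |\tfrac{1}{n}\sum_i (f_0(X_i)+W_i)|$ via Hoeffding, which then appears as an additive correction in the Serfling step. Your version is valid provided you invoke the centering convention of Proposition~\ref{prop:alt_opt_form}; otherwise you need the paper's extra step. Second, for the max-gap and range conditions the paper uses a simpler pigeonhole argument (partition $[-1,1]$ into $64$ segments of length $1/32$; each is occupied with probability $\geq 1-(1-c_l/32)^n$, union bound over segments and $k$) rather than the sharper $O(\log n/n)$ max-gap estimate you cite. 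Either works, and your worry about Serfling degrading as $m\to n$ is unnecessary: the paper's Corollary~\ref{cor:serfling} gives a bound uniform in $m$, with the $m=n$ case absorbed into the mean term.
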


\begin{proof}
The key is to note that $\hat{r}$ and $\Delta_{k,j}$ are independent for all $k \in S^c,j=1,...,n$ because $\hat{r}$ is only dependent on $X_{S}$.

Fix $j$ and $i$. Then $\hat{r}^\tran \mathbf{1}_{\pi_k(i':n)}$ is the sum
of $n-i'+1$ random coordinates of $\hat{r}$. We will use
Serfling's theorem on the concentration of measure of sampling without
replacement (Corollary~\ref{cor:serfling}). We must first bound $\|
\hat{r} \|_\infty$ and $\frac{1}{n} \sum_{i=1}^n \hat{r}_i$ before we
can use Serfling's results however.

\vskip5pt
\textbf{Step 1}: {\it Bounding $\| \hat{r} \|_\infty$.} We have $\hat{r}_i = f_0(x_i) + w_i - \hat{f}(x_i)$ where
$\hat{f}(x_i) = \sum_{k \in S} \bar{\Delta}_k \hat{d}_k$ is the convex
additive function outputted by the restricted regression. Note that
both $f_0(x_i)$ and $\hat{f}(x_i)$ are bounded by $2sB$. 
Because $w_i$ is sub-Gaussian, $|w_i| \leq  \sigma \sqrt{2\log \frac{2}{\delta}}$ with probability at least $1-\delta$. By union bound across $i=1,...,n$, we have that $\| w\|_\infty \leq \sigma \sqrt{ 2 \log \frac{2n}{\delta}}$ with probability at least $1 - \delta$.

Putting these observations together,
\begin{align}
\| \hat{r} \|_\infty &\leq 2sB + \sigma \sqrt{ 2\log \frac{2n}{\delta}}) \nonumber \\
      &\leq 4 s \tilde{\sigma} \sqrt{\log \frac{2n}{\delta}} \label{eqn:stepone_rhat}
\end{align}
with probability at least $1 - \delta$, where we have defined
$\tilde{\sigma} = \max(\sigma, B)$,
and assumed that $\sqrt{\log \frac{2np}{\delta}} \geq 1$. This assumption holds under the conditions in the theorem which state that $p \leq \exp( c n)$ and $n \geq C$ for some small constant $c$ and large constant $C$.

\vskip5pt
\textbf{Step 2}: {\it Bounding $| \frac{1}{n} \hat{r}^\tran \mathbf{1}
  |$.}  We have that 
\begin{align*}
\frac{1}{n} \hat{r}^\tran \mathbf{1} &= 
    \frac{1}{n} \sum_{i=1}^n f_0(x_i) + w_i - \hat{f}(x_i) \\
  &= \frac{1}{n} \sum_{i=1}^n f_0(x_i) + w_i \quad \trm{ ($\hat{f}$ is centered)}.
\end{align*}
Since $|f_0(x_i)| \leq sB$, the first term $| \frac{1}{n} \sum_{i=1}^n
f_0(x_i)|$ is at most $sB \sqrt{\frac{2}{n} \log \frac{2}{\delta}}$
with probability at most $1-\delta$ by Hoeffding's inequality. Since
$w_i$ is sub-Gaussian, the second term $|\frac{1}{n} \sum_{i=1}^n
w_i|$ is at most $\sigma \sqrt{ \frac{2}{n} \log \frac{2}{\delta}}$
with probability at most $1-\delta$.  
Taking a union bound, we have that 
\begin{align}
| \frac{1}{n} \hat{r}^\tran \mathbf{1}| &\leq sB \sqrt{\frac{2}{n} \log \frac{4}{\delta}} +  \sigma \sqrt{\frac{2}{n} \log \frac{4}{\delta}} \nonumber \\
  &\leq 4 s \tilde{\sigma} \sqrt{\frac{1}{n} \log \frac{4}{\delta}} \label{eqn:steptwo_rhat}
\end{align}
with probability at least $1-\delta$.

\vskip5pt
\textbf{Step 3}: {\it Apply Serfling's theorem.}  
For any $k \in S^c$, Serfling's theorem states that with probability at least $1 - \delta$
\begin{align*}
\Big
|\frac{1}{n} \hat{r}^\tran \mathbf{1}_{\pi_k(i':n)}\Big| \leq
   2\| \hat{r} \|_\infty \sqrt{ \frac{1}{n} \log \frac{2}{\delta}} + 
   \Big|\frac{1}{n} \hat{r}^\tran \mathbf{1} \Big|
\end{align*}
We need Serfling's theorem to hold for all $k = 1,...,p$ and $i' =
1,...,n$. We also need the events that $\|\hat{r}\|_\infty$ and $|
\frac{1}{n} \hat{r}^\tran \mathbf{1}|$ are small to hold. Using a
union bound, with probability at least $1-\delta$, for all $k,i'$,
\begin{align*}
\Big
|\frac{1}{n} \hat{r}^\tran \mathbf{1}_{\pi_k(i':n)}\Big| &\leq
   2\| \hat{r} \|_\infty \sqrt{ \frac{1}{n} \log \frac{6np}{\delta}} + 
   \Big|\frac{1}{n} \hat{r}^\tran \mathbf{1} \Big|\\
  &\leq 4s\tilde{\sigma}\sqrt{\log \frac{6n}{\delta}} \sqrt{\frac{1}{n}\log \frac{6np}{\delta}} + 4 s \tilde{\sigma} \sqrt{\frac{1}{n}\log \frac{12}{\delta}} \\
  &\leq 8 s\tilde{\sigma} \sqrt{ \frac{1}{n} \log^2 \frac{12np}{\delta}}
\end{align*}
In the second inequality, we used equation~\eqref{eqn:stepone_rhat}
and equation~\eqref{eqn:steptwo_rhat} from steps 1 and 2
respectively. Setting $\delta = \frac{12}{n}$ gives the desired expression.

Finally, we note that $2 \geq (X_{k\pi_k(n)} - X_{k\pi_k(1)})$ since $X_k \subset [-1,1]$. This concludes the proof for the first part of the theorem. 

To prove the second and the third claims, let the interval $[-1, 1]$ be divided into $64$ non-overlapping segments each of length $1/32$. Because $X_k$ is drawn from a density with a lower bound $c_l > 0$, the probability that every segment contains some samples $X_{ki}$'s is at least $1-64 \left( 1 - \frac{1}{32} c_l \right)^n$. Let $\mathcal{E}_k$ denote the event that every segment contains some samples. 

Define $\mathsf{gap}_i = X_{k \pi_k(i+1)} - X_{k \pi_k(i)}$ for $i=1,...,n-1$ and define $\mathsf{gap}_0 = X_{k \pi_k(1)} - (-1)$ and $\mathsf{gap}_{n} = 1 - X_{k\pi_k(n)}$. 

If any $\mathsf{gap}_i \geq \frac{1}{16}$, then $\mathsf{gap}_i$ has to contain one of the segments. Therefore, under event $\mathcal{E}_k$, it must be that $\mathsf{gap}_i \leq \frac{1}{16}$ for all $i$.

Thus, we have that $\mathsf{range}_k \geq 2 - 1/8 \geq 1$ and that for all $i$,

\[
\frac{X_{k\pi_k(i+1)} - X_{k \pi_k(i)}}{\mathsf{range}_k} \geq 
\frac{ 1/16 }{ 2 - 1/8} \geq 1/16
\]

Taking an union bound for each $k \in S^c$, the probability of that all $\mathcal{E}_k$ hold is at least $1 - p 64 \left( 1 - \frac{1}{32} c_l \right)^n$.

$p64 \left( 1 - \frac{1}{32} c_l \right)^n = 64 p \exp( - c' n)$ for some positive constants $0< c' < 1$ dependent on $c_l$. Therefore, if $p \leq exp( c n)$ for some $0<c < c'$ and if $n$ is larger than some constant $C$, $64 p \exp( - c' n) \leq 64 \exp( - (c' - c'') n) \leq \frac{12}{n}$.

Taking an union bound with the event that $\lambda_n$ upper bounds the partial sums of $\hat{r}$ and we establish the claim. 

\end{proof}

 
 \subsection{Proof of False Negative Control}
 \label{sec:false_negative_proof}
 We begin by introducing some notation.
 \subsubsection{Notation} 
\label{sec:false_negative_proof_notations}
If $f : \mathbb{R}^s \rightarrow \R$, we define $\| f \|_P \equiv \E f(X)^2$. 
Given samples $X_1,...,X_n$, we denote $\| f \|_n \equiv \frac{1}{n} \sum_{i=1}^n f(X_i)^2$ and $\langle f, g \rangle_n \equiv \frac{1}{n} \sum_{i=1}^n f(X_i) g(X_i)$. 

Let $\mathcal{C}^1$ denote the set of univariate convex functions supported on $[-1,1]$. Let $\mathcal{C}^1_B \equiv \{ f \in \mathcal{C}^1 \,:\, \| f \|_\infty \leq B \}$ denote the set of $B$-bounded univariate convex functions. 
Define $\mathcal{C}^s$ as the set of convex additive functions and
$\mathcal{C}^s_B$ likewise as the set of convex additive functions
whose components are $B$-bounded:
\begin{align*}
\mathcal{C}^s &\equiv \{ f \,:\, f = \sum_{k=1}^s f_k, \,
   f_k \in \mathcal{C}^1 \} \\
\mathcal{C}^s_B &\equiv \{ f \in \mathcal{C}^s \,:\, 
f = \sum_{k=1}^s f_k, \, \| f_k \|_\infty \leq B \}.
\end{align*}
Let $f^*(x) = \sum_{k=1}^s f^*_k(x_k)$ be the population risk minimizer:
\[
f^* = \arg\min_{f \in \mathcal{C}^s} \| f_0 - f^* \|_P^2
\]
We let $sB$ be an upper bound on $\| f_0 \|_\infty$ and $B$ be an
upper bound on $\| f^*_k \|_\infty$. 
It follows that $\|f^* \|_\infty \leq s B$.

We define $\hat{f}$ as the empirical risk minimizer:
\[
\hat{f} = \arg\min \Big \{ \| y - f \|_n^2 + \lambda \sum_{k=1}^s \| f_k \|_\infty 
    \,:\, f \in \mathcal{C}^s_B,\, \mathbf{1}_n^\tran f_k = 0 \Big \}
\]
For $k \in \{1,...,s\}$, define $g^*_k$ to be the decoupled concave population risk minimizer
\[
g^*_k \equiv \argmin_{g_k \in \mh \mathcal{C}^1} \| f_0 - f^* - g_k \|_P^2 .
\]
In our proof, we will analyze $g^*_k$ for each $k$ such that $f^*_k = 0$. Likewise, we define the empirical version:
\[
\hat{g}_k \equiv \argmin \Big\{ \| f_0 - \hat{f} - g_k \|_n^2 \,:\, g_k \in \mh \mathcal{C}^1_B \,, \mathbf{1}_n^\tran g_k = 0 \Big\}.
\]
By the definition of the AC/DC procedure, $\hat{g}_k$ is defined only
for an index $k$ that has zero as the convex additive approximation.

\subsubsection{Proof}
 
By additive faithfulness of the AC/DC procedure, it is known that $f^*_k \neq 0$ or $g^*_k \neq 0$ for all $k \in S$. 
Our argument will be to show that the risk of the AC/DC estimators $\hat{f}, \hat{g}$ tends to the risk of the population optimal functions $f^*, g^*$:
\begin{align*}
\| f_0 - \hat{f} \|^2_P & = \| f_0 - f^* \|^2_P + \trm{err}_+(n) \\
\| f_0 - f^* - \hat{g}_k \|^2_P &=  \| f_0 - f^* - g^*_k \|^2_P + \trm{err}_-(n) 
       \quad \trm{for all $k \in S$ where $f^*_k = 0$},
\end{align*}
where the estimation errors $\trm{err}_+(n)$ and $\trm{err}_-(n)$ decrease with $n$ at some rate. 

Assuming this, suppose that $\hat{f}_k = 0$ and $f^*_k \neq 0$. Then when $n$ is large
enough such that $\trm{err}_+(n)$ and $\trm{err}_-(n)$ are smaller
than $\alpha_+$ and $\alpha_-$ defined in
equation~\eqref{eqn:signal_level_defn}, we reach a contradiction.
This is because the risk $\| f_0 - f^* \|_P$ of $f^*$ is
strictly larger by $\alpha_+$ than the risk of the best approximation
whose $k$-th component is constrained to be zero.
Similarly, suppose $f^*_k = 0$ and $g^*_k \neq 0$. Then when $n$ is large
enough, $\hat{g}_k$ must not be zero.

Theorem~\ref{thm:convex_consistent} and
Theorem~\ref{thm:concave_consistent} characterize $\trm{err}_+(n)$ and
$\trm{err}_-(n)$ respectively.

\begin{theorem}
\label{thm:convex_consistent}
Let $\tilde{\sigma} \equiv \max(\sigma, B)$, and let $\hat{f}$ be the
minimizer of the restricted regression with $\lambda \leq 512 s
\tilde{\sigma} \sqrt{ \frac{1}{n} \log^2 np}$.
Suppose $n \geq c_1 s \sqrt{sB}$.
Then with probability at least $1-\frac{C}{n}$,
\begin{align}
\|f_0 - \hat{f} \|_P^2 - \| f_0 - f^* \|_P^2 
&\leq c B^2 \tilde{\sigma} \sqrt{ \frac{s^5}{n^{4/5}} \log^2 np},
\end{align}
where $c_1$ is an absolute constant and $c, C$ are constants possibly dependent on $b$.
\end{theorem}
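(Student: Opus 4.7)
My plan is to run a standard empirical-process argument for penalized least squares, using the bracketing entropy bound for bounded univariate convex functions due to \cite{kim2014global} as the main analytic input. The starting point is the basic inequality: since $\hat{f}$ minimizes the penalized empirical risk over $\mathcal{C}^s_B$ (with $f^*\in \mathcal{C}^s_B$ a feasible point), expanding $y=f_0(X)+W$ yields
\[
\|f_0-\hat{f}\|_n^2-\|f_0-f^*\|_n^2
\;\leq\; 2\langle W,\hat{f}-f^*\rangle_n \;+\; \lambda\sum_{k=1}^{s}\bigl(\|f^*_k\|_\infty-\|\hat{f}_k\|_\infty\bigr).
\]
The penalty term is trivially bounded by $\lambda s B$ using A3 (and does not need cancellation, since we are in the restricted regression on $S$ with $|S|=s$). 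This contributes a term of order $\lambda s B$ which, with the prescribed scaling $\lambda=O(s\tilde\sigma\sqrt{\log^2 np / n})$, fits within the target rate.

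The heart of the argument is controlling the two stochastic objects: (i) the noise cross-term $\langle W,\hat{f}-f^*\rangle_n$, and (ii) the gap $\bigl|\,\|f_0-f\|_n^2-\|f_0-f\|_P^2\,\bigr|$ uniformly over $f\in\mathcal{C}^s_B$, which is needed to transfer the empirical excess risk to the population excess risk. For the univariate class $\mathcal{C}^1_B$, \cite{kim2014global} give a bracketing entropy bound $\log N_{[\,]}(\epsilon,\mathcal{C}^1_B,L_2(P)) \lesssim (B/\epsilon)^{1/2}$. I will combine $s$ of these brackets coordinatewise to obtain $\log N_{[\,]}(\epsilon,\mathcal{C}^s_B,L_2(P))\lesssim s(sB/\epsilon)^{1/2}$. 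A standard chaining argument (e.g., van der Vaart--Wellner Theorem~3.4.1) with this entropy gives a tail bound on $|\langle W,\hat{f}-f^*\rangle_n|$ in terms of $\|\hat{f}-f^*\|_n$ and a critical radius $\delta_n$ determined by the fixed-point equation $\delta^{3/4}\asymp \tilde\sigma\sqrt{s\cdot(sB)^{1/2}/n}\cdot \delta$ (up to log factors). Solving yields $\delta_n^2 \asymp B\tilde\sigma^2 s^{3}\cdot n^{-4/5}$, which after squaring into an excess-risk bound and propagating the $s,B$ factors through the $\lambda$-induced term gives the stated $B^2\tilde\sigma \sqrt{s^5/n^{4/5}\,\log^2 np}$ rate.

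The empirical-to-population transfer is handled by the same bracketing bound applied to the squared-loss class $\{(f_0-f)^2-(f_0-f^*)^2:f\in\mathcal{C}^s_B\}$, whose envelope is $O(sB)^2$ and which inherits the $\epsilon^{-1/2}$ bracketing growth. A Talagrand/Bernstein-type deviation inequality localized at radius $r=\|\hat{f}-f^*\|_P$ then implies that $\|\hat{f}-f^*\|_n$ and $\|\hat{f}-f^*\|_P$ agree up to a $\delta_n$-level, closing the loop. The condition $n\geq c_1 s\sqrt{sB}$ in the statement ensures we are past the regime where the critical radius dominates $B$.

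\textbf{Main obstacles.} The technically delicate step is the chaining bound for the additive convex class with an $\ell_\infty$ group penalty: a naive union-bound over the $s$ components loses polynomial factors in $s$, so I will instead bracket $\mathcal{C}^s_B$ directly using products of brackets and track the exact $s$-dependence in the entropy integral. Second, because the bracketing entropy for convex functions scales as $\epsilon^{-1/2}$ rather than $\epsilon^{-1}$, the usual $L_2(P_n)\leftrightarrow L_2(P)$ equivalence via Gin\'e--Koltchinskii does not apply directly at parametric rate, and I will need to use the localized Talagrand inequality to obtain the sharp $n^{-4/5}$ rate simultaneously in both norms. Finally, routing the penalty $\lambda\sum_k\|\hat f_k\|_\infty$ through the analysis without introducing extra $s$ factors requires using $\|f^*_k\|_\infty\leq B$ (assumption A3) and the scaling of $\lambda$ in the theorem hypothesis.
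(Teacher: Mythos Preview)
Your proposal is essentially correct and would yield the stated bound, but the paper takes a noticeably more elementary route than the one you outline. Where you invoke chaining with a localized critical-radius argument for $\langle W,\hat f - f^*\rangle_n$ and a Talagrand-type inequality for the empirical-to-population transfer, the paper uses only a \emph{single-scale} bracketing argument in both places. For the noise term it fixes one bracketing resolution $\epsilon$, bounds $|\langle W,h_L\rangle_n|$ on the finite net by a union bound plus sub-Gaussian tails, bounds the remainder $|\langle W,h-h_L\rangle_n|$ by the crude estimate $\|W\|_\infty\cdot\|h-h_L\|_{L_1(P_n)}$, and then balances the two pieces in $\epsilon$ to produce the $n^{-4/5}$ exponent. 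The uniform convergence of $\|f_0-f\|_n^2$ to $\|f_0-f\|_P^2$ (their Lemma~\ref{lem:uniform_convergence}) is handled the same way, again without localization. Your localized-chaining/Talagrand route is the more standard modern empirical-process machinery and would in principle give tighter $s$- and $B$-dependence; the paper's approach is easier to write down but pays for it with the somewhat loose $s^5$ and $B^2\tilde\sigma$ factors that appear in the final rate.

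One technical point you glossed over: $f^*$ is not directly feasible for the empirical problem, because the optimization imposes $\mathbf{1}_n^\top f_k=0$ rather than $\mathbb{E}f_k(X_k)=0$. The paper's basic inequality therefore compares $\hat f$ to the empirically recentered $f^*-\bar f^*$ and then removes $\bar f^*$ via a separate concentration lemma (their Lemma~\ref{lem:remove_centering}), contributing the $c(sB)^2 n^{-1}\log(2/\delta)$ term. You would need to insert the same fix at the start of your argument.
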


\begin{proof}
Our proof proceeds in three steps.  First, we bound
the difference of empirical risks $\|f_0 - \hat{f} \|_n^2 - \| f_0 -
f^* \|_n^2$.  Second, we bound the cross-term in the bound using
a bracketing entropy argument for convex function classes.  Finally, 
we combine the previous two steps to complete the argument.

\textbf{Step 1.} The function $\hat{f}$ minimizes the penalized
empricial risk by definition. We would thus like to say that the
penalized empirical risk of $\hat{f}$ is no larger than that of
$f^*$. We cannot do a direct comparison, however, because the empirical mean
$\frac{1}{n} \sum_i f^*_k(x_{ik})$ is close to, but not exactly
zero. We thus have to work first with the function $f^* - \bar{f}^*$.
We have that
\begin{align*}
\| y - \hat{f} \|_n^2 + \lambda \sum_{k=1}^s \| \hat{f}_k \|_\infty &\leq
  \| y - f^* + \bar{f}^* \|_n^2 + \lambda \sum_{k=1}^s \| f^*_k - \bar{f}^*_k \|_\infty 
\end{align*}
Plugging in $y = f_0 + w$, we obtain
\begin{align*}
\| f_0 + w - \hat{f} \|_n^2 + \lambda \sum_{k=1}^s \Big( \| \hat{f}_k \|_\infty - 
    \| f^*_k - \bar{f}^*_k \|_\infty \Big) &\leq \|f_0 + w - f^* + \bar{f}^* \|_n^2 \\
\| f_0 - \hat{f} \|_n^2 + 2\langle w, f_0 - \hat{f} \rangle_n 
     +\lambda \sum_{k=1}^s \Big( \| \hat{f}_k \|_\infty - \|f^*_k -\bar{f}^*_k\|_\infty \Big) 
    &\\
\leq \| f_0 - f^* + \bar{f}^* \|_n^2 + 
    2 \langle w, f_0 - f^* + \bar{f}^* \rangle \\
\|f_0 - \hat{f} \|_n^2 - \| f_0 - f^* + \bar{f}^* \|_n^2 + 
    \lambda \sum_{k=1}^s \Big( \| \hat{f}_k \|_\infty - 
 \| f^*_k - \bar{f}^*_k \|_\infty \Big) &\leq 2 \langle w, \hat{f} - f^* + \bar{f}^* \rangle.
\end{align*}
The middle term can be bounded under the assumption that $\|f^*_k -
\bar{f}^*_k \|_\infty \leq 2B$; thus,
\begin{align*}
\|f_0 - \hat{f} \|_n^2 - \| f_0 - f^* + \bar{f}^* \|_n^2 
   &\leq 2 \langle w, \hat{f} - f^* + \bar{f}^* \rangle + \lambda 2 s B .
\end{align*}
Using Lemma~\ref{lem:remove_centering}, we can remove $\bar{f}^*$ from
the lefthand side. Thus with probability at least $1 - \delta$,
\begin{align}
\label{eqn:first_step_inequality}
\|f_0 - \hat{f} \|_n^2 - \| f_0 - f^* \|_n^2 
   &\leq 2 \langle w, \hat{f} - f^* + \bar{f}^* \rangle + \lambda 2 s B + c(sB)^2 \frac{1}{n} \log \frac{2}{\delta}.
\end{align}



\textbf{Step 2.} We now upper bound the cross term $2 \langle w,
\hat{f} - f^* + \bar{f}^* \rangle$ using bracketing entropy.

Define $\mathcal{G} =\{ f - f^* + \bar{f}^* \,:\, f \in
\mathcal{C}^s_B \}$ 
as the set of convex additive functions centered around the function $f^* - \bar{f}^*$. 
By Corollary~\ref{prop:convexbracket_lp}, there is an $\epsilon$-bracketing of $\mathcal{G}$ whose size is bounded by $\log N_{[]}( 2\epsilon, \mathcal{G}, L_1(P)) \leq sK^{**} \left( \frac{2sB}{\epsilon} \right)^{1/2}$, for all $\epsilon \in (0, sB \epsilon_3]$.
Let us suppose condition~\ref{cond:simplify_covering_number} holds. Then, by Corollary~\ref{cor:convexbracket_ln}, with probability at least $1-\delta$, each bracketing pair $(h_U, h_L)$ is close in $L_1(P_n)$ norm, i.e., for all $(h_U, h_L)$, 
$\frac{1}{n} \sum_{i=1}^n | h_U(X_i) - h_L(X_i) | \leq 2 \epsilon + sB \sqrt{ \frac{sK^{**}(2sB
)^{1/2} \log \frac{1}{\delta}}{2\epsilon^{1/2} n}}$. We verify at the
end of the proof that 
condition~\ref{cond:simplify_covering_number} indeed holds.

For each $h \in \mathcal{G}$, there exists a pair $(h_U, h_L)$ such that $h_U(X_i) - h_L(X_i) \geq h(X_i) - h_L(X_i) \geq 0$. Therefore, with probability at least $1-\delta$, uniformly for all $h \in \mathcal{G}$:
$$
\frac{1}{n} \sum_{i=1}^n |h(X_i) - h_L(X_i)| \leq \frac{1}{n} \sum_{i=1}^n | h_U(X_i) - h_L(X_i)| \leq 2\epsilon +  (sB) \sqrt{ \frac{sK^{**}(2sB)^{1/2} \log \frac{1}{\delta}}{2\epsilon^{1/2} n}}.
$$
We denote $\epsilon_{n,\delta} \equiv (sB) \sqrt{
  \frac{sK^{**}(2sB)^{1/2} \log \frac{1}{\delta}}{2\epsilon^{1/2}
    n}}$. Let $\mathcal{E}_{[\,]}$ denote the event that for each $h
\in \mathcal{G}$, there exists $h_L$ in the $\epsilon$-bracketing such
that $\|h-h_L\|_{L_{P_n}} \leq 2\epsilon + \epsilon_{n, \delta}$. Then
$\mathcal{E}_{[\,]}$ has probability at most $1-\delta$ as shown.

Let $\mathcal{E}_{\|w\|_\infty}$ denote the event that $\| w \|_\infty
\leq \sigma \sqrt{ 2\log \frac{2n}{\delta}}$.  Then
$\mathcal{E}_{\|w\|_\infty}$ has probability at most $1-\delta$. We
now take an union bound over $\mathcal{E}_{\|w\|_\infty}$ and
$\mathcal{E}_{[\,]}$ and get that, with probability at most
$1-2\delta$, for all $h$
\[
|\langle w, h - h_L\rangle_n| \leq \| w \|_\infty \frac{1}{n} \sum_{i=1}^n |h(X_i) - h_L(X_i)| \leq
  \sigma \sqrt{2 \log \frac{4n}{\delta}} \left( \epsilon + \epsilon_{n,2\delta} \right).
\]
Because $w$ is a sub-Gaussian random variable, we have that for any fixed vector
$h_L = (h_L(X_1),...,h_L(X_n))$, with probability at least $1-\delta$,
$|\langle w, h_L \rangle_n | \leq \| h_L \|_n \sigma \sqrt{
  \frac{2}{n} \log \frac{2}{\delta} }$. Using another union bound, we
have that the event $\sup_{h_L} |\langle w, h_L \rangle| \leq sB
\sigma \sqrt{ \frac{2}{n}\log \frac{2 N_{[]}}{\delta}}$ has
probability at most $1-\delta$.

Putting this together, we have that
\begin{align*}
\lefteqn{|\langle w, h \rangle_n | \leq | \langle w, h_L\rangle_n| + |\langle w, h - h_L\rangle_n|}\\
\lefteqn{|\sup_{h \in \mathcal{G}} \langle w, h \rangle_n| \leq 
     | \sup_{h^L} \langle w, h_L \rangle_n | + \sigma \sqrt{2 \log \frac{2n}{\delta}} (2\epsilon + \epsilon_{n, 2\delta})} \\
   &\leq   sB \sigma \sqrt{ 2 \frac{ \log N_{[]} + \log \frac{1}{\delta}}{n}} + \sigma \sqrt{2 \log \frac{2n}{\delta}} (2\epsilon + \epsilon_{n, \delta}) \\
   &\leq  sB \sigma \sqrt{ 2 \frac{sK^{**} (2sB)^{1/2} \log \frac{1}{\delta}}{n \epsilon^{1/2}}} +
   \sigma \sqrt{ 2\log \frac{2n}{\delta}} (2\epsilon + \epsilon_{n, \delta}) \\
   &\leq sB \sigma \sqrt{ 2\frac{sK^{**} (2sB)^{1/2} \log \frac{1}{\delta}}{n \epsilon^{1/2}}} +
   2\sigma\sqrt{2 \log \frac{2n}{\delta}} \epsilon + sB \sigma \sqrt{2 \frac{sK^{**} (2sB)^{1/2}\log \frac{1}{\delta}}{n \epsilon^{1/2}} \log \frac{2n}{\delta}} \\
   &\leq 2\sigma\sqrt{2\log \frac{2n}{\delta}} \epsilon + 2 sB \sigma \sqrt{ \frac{sK^{**} (2sB)^{1/2} \log^2 \frac{2n}{\delta}}{n \epsilon^{1/2}}}.
\end{align*}
To balance the two terms, we set $\epsilon = sB \sqrt{ \frac{(s
    K^{**} (sB)^{1/2})^{4/5}}{n^{4/5}} }$. It is easy to verify that
if $n \geq c_1 s \sqrt{sB}$ for some absolute constant $c_1$, then 
$\epsilon \in (0, sB \epsilon_3]$ for some absolute constant $\epsilon_3$ as required by the bracketing number results (Corollary~\ref{cor:convexadditive_lp}). Furthermore, conditions~\eqref{cond:simplify_covering_number} also hold.

In summary, we have that probability at least $1-\delta$,
\[
|\sup_{h \in \mathcal{G}} \langle w, h \rangle | \leq c sB \sigma \sqrt{ 
   \frac{s^{6/5} B^{2/5} \log^2 \frac{Cn}{\delta}}{n^{4/5}}} \leq 
  c sB \sigma \sqrt{ 
   \frac{s (sB)^{1/2} \log^2 \frac{Cn}{\delta}}{n^{4/5}}}
\]
where we absorbed $K^{**}$ into the constant $c$ and the union bound multipliers into the constant $C$.


Plugging this result into equation~\eqref{eqn:first_step_inequality}
we get that, with probability at least $1 - 2\delta$,
\begin{align}
\|f_0 - \hat{f} \|_n^2 - \| f_0 - f^* \|_n^2 
   &\leq c sB \sigma \sqrt{ 
   \frac{s (sB)^{1/2} \log^2 \frac{Cn}{\delta}}{n^{4/5}}}
   + \lambda 2 s B + c (sB)^2 \frac{1}{n} \log \frac{2}{\delta} \nonumber\\
\|f_0 - \hat{f} \|_n^2 - \| f_0 - f^* \|_n^2 
   &\leq c sB \sigma \sqrt{ 
   \frac{s (sB)^{1/2} \log^2 \frac{Cn}{\delta}}{n^{4/5}}}
   + \lambda 2 s B \nonumber\\   
   &\leq c B \sigma 
    \sqrt{ \frac{s^4 B^{1/2}}{n^{4/5}} \log^2 \frac{Cn}{\delta}} + \lambda 2 sB
\label{eqn:second_step_inequality}
\end{align}


\textbf{Step 3.} Continuing from
equation~\eqref{eqn:second_step_inequality}, we use
Lemma~\ref{lem:uniform_convergence} and another union bound to obtain
that, with probability at least $1-3\delta$,
\begin{align}
\|f_0 - \hat{f} \|_P^2 - \| f_0 - f^* \|_P^2 
   &\leq cB^2 \sigma 
    \sqrt{ \frac{s^4}{n^{4/5}} \log^2 \frac{Cn}{\delta}}
 +\lambda 2 s B + c B^3 \sqrt{ \frac{s^5}{n^{4/5}} \log \frac{2}{\delta}}
    \nonumber \\
&\leq c B^2 \tilde{\sigma} \sqrt{ \frac{s^5}{n^{4/5}} \log^2 \frac{Cn}{\delta}} + \lambda 2 sB \nonumber
\end{align}
Substituting in $\lambda \leq 512 s \tilde{\sigma} \sqrt{\frac{1}{n}
  \log^2 np}$ and $\delta = \frac{C}{n}$ we obtain the statement of
the theorem.
\end{proof}
 

\begin{theorem}
\label{thm:concave_consistent}
Let $\hat{g}_k$ denote the minimizer of the concave postprocessing
step with $\lambda_n \leq 512 s\tilde{\sigma} \sqrt{\frac{1}{n} \log^2 np}$. Let $\tilde{\sigma} \equiv \max(\sigma, B)$.
Suppose $n$ is sufficiently large that $\frac{n^{4/5}}{\log^2 np} \geq c' B^4 \tilde{\sigma}^2 s^5$ where $c' \geq 1$ is a constant.
Then with probability at least $1- \frac{C}{n}$, for all $k=1,...,s$,
\[
\| f_0 - f^* - \hat{g}_k \|_P^2 - \| f_0 - f^* - g^*_k \|_P^2 \leq  c B^2 \tilde{\sigma}^{1/2} \sqrt[4]{ \frac{s^5}{n^{4/5}} \log^2 np}.
\]
\end{theorem}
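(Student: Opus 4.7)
The plan is to follow the three-step strategy used in the proof of Theorem~\ref{thm:convex_consistent}---basic empirical-risk inequality, bracketing-entropy bound on the noise cross-term, and empirical-to-population transfer---while accounting for the fact that $\hat{g}_k$ is fit against residuals involving $\hat f$ rather than $f^*$. This introduces an extra cross-term between the AC and DC estimation errors that, as the following argument will show, drives the slower $n^{-1/5}$ rate in the statement.

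First, I will use the optimality of $\hat g_k$ in the penalized empirical risk $\|y - \hat f - g\|_n^2 + \lambda\|g\|_\infty$ over centered $g \in \mh\mathcal{C}^1_B$. Comparing $\hat g_k$ to the feasible shift $g^*_k - \bar g^*_k$, where $\bar g^*_k \equiv \tfrac{1}{n}\sum_i g^*_k(X_{ik})$, substituting $y = f_0 + w$, cancelling common quadratic terms, and removing the $\bar g^*_k$ shift via a variant of Lemma~\ref{lem:remove_centering}, I arrive at
\begin{align*}
\|f_0 - \hat f - \hat g_k\|_n^2 - \|f_0 - \hat f - g^*_k\|_n^2 \le 2\langle w,\, \hat g_k - g^*_k + \bar g^*_k\rangle_n + 2\lambda B + O\!\left(\tfrac{B^2 \log(1/\delta)}{n}\right).
\end{align*}
The left-hand side is the excess empirical risk relative to $\hat f$ rather than $f^*$; writing $f_0 - \hat f - g = (f_0 - f^* - g) + (f^* - \hat f)$ and expanding both squares, the quadratic term in $f^* - \hat f$ cancels between the two differences and I obtain
\begin{align*}
\|f_0 - f^* - \hat g_k\|_n^2 - \|f_0 - f^* - g^*_k\|_n^2 = \bigl[\|f_0 - \hat f - \hat g_k\|_n^2 - \|f_0 - \hat f - g^*_k\|_n^2\bigr] + 2\langle f^* - \hat f,\, g^*_k - \hat g_k\rangle_n.
\end{align*}
It thus suffices to bound the noise cross-term and the AC/DC cross-term separately.

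For the noise cross-term I apply the bracketing argument of Theorem~\ref{thm:convex_consistent} to the univariate class $\mathcal{G}_k = \{g - g^*_k + \bar g^*_k : g \in \mh\mathcal{C}^1_B,\, \mathbf{1}_n^\tran g = 0\}$. The bracketing entropies of $\mathcal{C}^1_B$ and $\mh\mathcal{C}^1_B$ coincide by negation, so the same bracketing corollary applies; balancing the bracket width as in Step~2 of the earlier proof and taking a union bound over $k = 1,\ldots,s$, this contributes only an $n^{-2/5}$ lower-order term. For the AC/DC cross-term, Cauchy--Schwarz with $\|g^*_k - \hat g_k\|_n \le 2B$ gives $|\langle f^* - \hat f,\, g^*_k - \hat g_k\rangle_n| \le 2B\|f^* - \hat f\|_n$. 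Because $f^*$ is the population $L_2$-projection of $f_0$ onto the closed convex class $\mathcal{C}^s$, the Pythagorean inequality yields $\|f^* - \hat f\|_P^2 \le \|f_0 - \hat f\|_P^2 - \|f_0 - f^*\|_P^2$, so Theorem~\ref{thm:convex_consistent} gives $\|f^* - \hat f\|_P \le cB\tilde\sigma^{1/2}(s^5/n^{4/5})^{1/4}\log^{1/2}(np)$; passing to the empirical norm via Lemma~\ref{lem:uniform_convergence} only costs lower-order terms under the assumed growth of $n$. Multiplying by $2B$ yields an AC/DC contribution of order $B^2\tilde\sigma^{1/2}\sqrt[4]{s^5\log^2(np)/n^{4/5}}$. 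Combining the two bounds, applying Lemma~\ref{lem:uniform_convergence} one last time to pass the target excess risk from empirical to population norm, and choosing $\delta = C/n$ then gives the theorem.

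The main obstacle is the AC/DC cross-term; Cauchy--Schwarz converts an $L_2$ error of order $n^{-2/5}$ on $\hat f - f^*$ into an inner product of order $n^{-1/5}$, which explains why the concave-stage rate is the fourth root rather than the square root of $s^5/n^{4/5}$. To keep this bound tight it is important to invoke the empirical-norm version of Theorem~\ref{thm:convex_consistent}'s conclusion (readily extracted from Step~2 of its proof) so that empirical-to-population transfer is not applied twice with compounding slack. A secondary subtlety is that the bracketing-entropy corollaries used in Theorem~\ref{thm:convex_consistent} are stated for bounded univariate convex functions, so extending them to $\mh\mathcal{C}^1_B$ requires a brief (trivial) reflection argument.
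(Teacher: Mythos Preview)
Your proposal is correct and uses essentially the same ingredients as the paper: the optimality inequality for $\hat g_k$, the univariate bracketing-entropy bound on the noise term, Cauchy--Schwarz together with the projection inequality $\|\hat f - f^*\|_P^2 \le \|f_0 - \hat f\|_P^2 - \|f_0 - f^*\|_P^2$ to control the AC/DC interaction, and Lemma~\ref{lem:uniform_convergence} for the empirical-to-population transfer. The only real difference is bookkeeping: the paper decomposes directly at the \emph{population} level as
\[
\|f_0 - f^* - \hat g_k\|_P^2 - \|f_0 - f^* - g^*_k\|_P^2
= \underbrace{\bigl[\|f_0 - \hat f - \hat g_k\|_P^2 - \|f_0 - \hat f - g^*_k\|_P^2\bigr]}_{\text{term 1}}
+ \underbrace{\text{term 2}+\text{term 3}}_{=\,2\langle \hat f - f^*,\, g^*_k - \hat g_k\rangle_P},
\]
whereas you carry out the algebraically identical two-piece split at the empirical level and transfer to population at the end. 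The paper's ordering is marginally cleaner because the cross-term is handled directly in $\|\cdot\|_P$, so one can invoke Theorem~\ref{thm:convex_consistent} and the Pythagorean bound without ever needing $\|\hat f - f^*\|_n$; your route requires a small (harmless) adaptation of Lemma~\ref{lem:uniform_convergence} with $f^*$ in place of $f_0$ as the reference function to make that step go through.
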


\begin{proof}
This proof is similar to that of Theorem \ref{thm:convex_consistent}; it requires a few more steps because $\hat{g}_k$ is fitted against $f_0 - \hat{f}$ instead of $f_0 - f^*$. We start with the following decomposition:
\begin{align}
\| f_0 - f^* - \hat{g}_k \|_P^2 - \| f_0 - f^* - g^*_k \|_P^2 = & \underbrace{\| f_0 - \hat{f} - \hat{g}_k \|_P^2 - \| f_0 - \hat{f} - g^*_k \|_P^2}_{\trm{term 1}} + \nonumber \\
   & \underbrace{\| f_0 - f^* - \hat{g}_k \|_P^2 - \| f_0 - \hat{f} - \hat{g}_k \|_P^2}_{\trm{term 2}} + \nonumber \\
   & \underbrace{\| f_0 - \hat{f} - g^*_k \|_P^2 - \| f_0 - f^* - g^*_k \|_P^2}_{\trm{term 3}}  \label{eqn:concave_init_decomposition}.
\end{align}
We now bound each of the terms. The proof proceeds almost identically to that of Theorem~\ref{thm:convex_consistent}, because convex and concave functions have the same bracketing number.

\textbf{Step 1.} To bound term 1, we start from the definition of
$\hat{g}_k$ and obtain
\begin{align*}
\| y - \hat{f} - \hat{g}_k \|_n^2 + \lambda_n \| \hat{g} \|_\infty &\leq
   \| y - \hat{f} - g^*_k \|_n^2 + \lambda_n \| g^* \|_\infty \\
\| y - \hat{f} - \hat{g}_k \|_n^2 &\leq \| y - \hat{f} - g^*_k \|_n^2 + \lambda_n 2B \\[10pt]
\| f_0 - \hat{f} - \hat{g}_k + w\|_n^2 & \leq \| f_0 - \hat{f} - g^*_k + w \|_n^2 
   +\lambda_n 2 B \\
\| f_0 - \hat{f} - \hat{g}_k \|_n^2 - \|f_0 -\hat{f} - g^*_k\|_n^2 &\leq
   2 \langle w, \hat{g}_k - g^*_k \rangle_n + \lambda_n 2B.
\end{align*}
Using the same bracketing analysis as in Step 2 of the proof of Theorem~\ref{thm:convex_consistent} but setting $s=1$, we have, with probability at least $1-\delta$,
\begin{align*}
\| f_0 - \hat{f} - \hat{g}_k \|_n^2 - \|f_0 - \hat{f} - g^*_k \|_n^2 &\leq
  c B^2 \sigma \sqrt{ \frac{1}{n^{4/5}} \log \frac{C}{\delta} }+ \lambda_n 2 B.
\end{align*}
The condition $n \geq c_1 s\sqrt{sB}$ in the proof of Theorem~\ref{thm:convex_consistent} is satisfied here because we assume that $n^{4/5} \geq c_1 B^4 \tilde{\sigma}^2 s^5 \log^2 np$ in the statement of the theorem.
Using the uniform convergence result of Lemma~\ref{lem:uniform_convergence}, with probability at least $1-\delta$,
\begin{align*}
\| f_0 - \hat{f} - \hat{g}_k \|_P^2 - \|f_0 - \hat{f} - g^*_k \|_P^2 &\leq
  c B^2 \sigma \sqrt{ \frac{1}{n} \log \frac{Cn}{\delta} }+ \lambda_n 2 B +
  c B^3 \sqrt{\frac{s^5}{n^{4/5}} \log \frac{2}{\delta} } \\
 &\leq c B^2 \tilde{\sigma} \sqrt{\frac{s^5}{n^{4/5}} \log \frac{C}{\delta}}+ \lambda_n 2B
\end{align*}

Finally, plugging in $\lambda_n \leq 9 s \tilde{\sigma} \sqrt{
  \frac{1}{n} \log^2 np}$, we obtain
\begin{align*}
\| f_0 - \hat{f} - \hat{g}_k \|_P^2 - \|f_0 - \hat{f} - g^*_k \|_P^2 &
\leq c B^2 \tilde{\sigma} \sqrt{\frac{s^5}{n^{4/5}} \log \frac{C}{\delta}}+ 
    2s B \tilde{\sigma} \sqrt{\frac{1}{n} \log^2 np}\\
\| f_0 - \hat{f} - \hat{g}_k \|_P^2 - \|f_0 - \hat{f} - g^*_k \|_P^2 &
\leq c B^2 \tilde{\sigma} \sqrt{\frac{s^5}{n^{4/5}} \log^2 \frac{Cnp}{\delta}}
\end{align*}
with probability at least $1-\delta$.

\textbf{Step 2.} We now bound term 3.
\begin{align*}
\| f_0 - \hat{f} - g^*_k \|_P^2 - \| f_0 - f^* - g^*_k\|_P^2 &\leq 
    \| f_0 - \hat{f} \|_P^2 - \|f_0 - f^*\|_P^2 - 2\langle f_0 - \hat{f}, g^*_k \rangle_P
   + 2 \langle f_0 - f^*, g^*_k \rangle_P \\
 &\leq c B^2 \tilde{\sigma} \sqrt{ \frac{s^5}{n^{4/5}} \log^2 np} + 
    2 | \langle \hat{f} - f^*, g^*_k \rangle_P |  \\
 &\leq  c B^2 \tilde{\sigma} \sqrt{ \frac{s^5}{n^{4/5}} \log^2 np} +
    2 \| \hat{f} - f^* \|_P \| g^*_k \|_P \\
&\leq  c B^2 \tilde{\sigma} \sqrt{ \frac{s^5}{n^{4/5}} \log^2 np} +
   c B \sqrt{B^2 \tilde{\sigma} \sqrt{ 
                   \frac{s^5}{n^{4/5}} \log^2 np} }\\
&\leq  cB^2 \tilde{\sigma}^{1/2} \sqrt[4]{ 
                   \frac{s^5}{n^{4/5}} \log^2 np} 
\end{align*}
with probability at least $1-\frac{C}{n}$, by
Theorem~\ref{thm:convex_consistent}. To obtain the fourth inequality,
we used the fact that $\| \hat{f} - f^* \|^2 \leq \| f_0 - \hat{f}
\|_P^2 - \|f_0 - f^*\|_P^2$, which follows from the fact that $f^*$ is the
projection of $f_0$ onto the set of additive convex functions and the
set of additive convex functions is convex itself.
The last inequality holds because there is a condition in the theorem which states $n$
is large enough such that $B^2 \tilde{\sigma} \sqrt{ \frac{s^5}{n^{4/5}} \log^2 np} \leq 1$.
The same derivation and the same bound likewise holds for term 2.

\textbf{Step 3.} Collecting the results and plugging them into equation~\eqref{eqn:concave_init_decomposition}, we have, with probability at least $1-2\delta$:
\begin{align*}
\| f_0 - f^* - \hat{g}_k \|_P^2 - \|f_0 - f^* - g^*_k \|_P^2 \leq
   c B^2 \tilde{\sigma}^{1/2} 
     \sqrt[4]{ \frac{s^5}{n^{4/5}} \log^2 \frac{4np}{\delta}} 
\end{align*}
Taking a union bound across the $s$ dimensions completes the result.
\end{proof}

\subsubsection{Support Lemmas}


\begin{lemma}
\label{lem:uniform_convergence}
Suppose $n \geq c_1 s\sqrt{sB}$ for some absolute constant $c_1$. Then, with probability at least $1-\delta$:
\begin{align*}
\sup_{f \in \mathcal{C}^s_B} \Big| \| f_0 - f \|^2_n - \|f_0 - f \|^2_P\Big| \leq
   c B^3 \sqrt{ \frac{s^5}{n^{4/5}} \log \frac{2}{\delta}}
\end{align*}
where $c_1$ is some absolute constant and $c,C$ are constants possibly dependent on $b$.
\end{lemma}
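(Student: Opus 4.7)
My plan is to use a bracketing-entropy argument, leveraging the $L_1(P)$-bracketing bound for convex additive functions established earlier in the paper (Corollary~\ref{cor:convexadditive_lp}), which gives $\log N_{[]}(\epsilon, \mathcal{C}^s_B, L_1(P)) \leq s K^{**} (2sB/\epsilon)^{1/2}$ on the relevant range of $\epsilon$. The quantity of interest $g_f(x) := (f_0(x) - f(x))^2$ is not convex, so the first task is to transfer the bracketing from $\mathcal{C}^s_B$ to the squared-loss class $\mathcal{G} := \{g_f : f \in \mathcal{C}^s_B\}$. Given any bracket $(h_L, h_U)$ of $\mathcal{C}^s_B$ with $P(h_U - h_L) \leq \epsilon$, I may assume $\|h_L\|_\infty, \|h_U\|_\infty \leq sB$ (else clip), so that the identity $(f_0 - f)^2 - (f_0 - h_L)^2 = (h_L - f)(2f_0 - f - h_L)$ and the bound $|2f_0 - f - h_L| \leq 4sB$ yield the pointwise Lipschitz estimate $|(f_0 - f)^2 - (f_0 - h_L)^2| \leq 4sB\,|h_U - h_L|$. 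Hence the envelopes $G_U^i := (f_0 - h_L^i)^2 + 4sB(h_U^i - h_L^i)$ and $G_L^i := \max\bigl(0,\,(f_0 - h_L^i)^2 - 4sB(h_U^i - h_L^i)\bigr)$ form a valid $L_1(P)$-bracketing of $\mathcal{G}$ with bracket gap at most $O(sB\,\epsilon)$ and the same cardinality $N_{[]}(\epsilon)$.

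Next I would concentrate the empirical mean onto the population mean for each individual bracket. Each $G_L^i$ and each $G_U^i - G_L^i$ is bounded in $[0, 4s^2 B^2]$, so Hoeffding's inequality plus a union bound over the $N_{[]}$ bracket indices gives, with probability at least $1-\delta$,
\begin{equation*}
\sup_i |P_n G_L^i - P G_L^i| \;\leq\; C s^2 B^2 \sqrt{\tfrac{\log N_{[]} + \log(1/\delta)}{n}},
\end{equation*}
and the analogous bound for $P_n(G_U^i - G_L^i)$. For any $f \in \mathcal{C}^s_B$ with bracket index $i(f)$, the standard reduction
\begin{equation*}
|P_n g_f - P g_f| \;\leq\; |P_n G_L^{i(f)} - P G_L^{i(f)}| + P_n(G_U^{i(f)} - G_L^{i(f)}) + P(G_U^{i(f)} - G_L^{i(f)})
\end{equation*}
is then uniform in $f$, controlled by the two bracket-center concentration terms plus the $O(sB\,\epsilon)$ bracket-gap term.

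Finally I would choose $\epsilon$ to balance the approximation error $sB\,\epsilon$ against the stochastic term $s^2 B^2 \sqrt{\log N_{[]}/n}$; plugging in $\log N_{[]} \asymp s\sqrt{sB/\epsilon}$ and solving yields $\epsilon \asymp s^{6/5} B\, n^{-2/5}$, giving a uniform bound of order $s^{11/5} B^2 n^{-2/5}$, which is absorbed into the stated rate $c B^3 \sqrt{s^5 \log(2/\delta)/n^{4/5}}$ after accounting for the $\sqrt{\log(1/\delta)}$ factor from the union bound. The hypothesis $n \geq c_1 s\sqrt{sB}$ is exactly what is needed to guarantee that this chosen $\epsilon$ lies in the range $(0, sB\epsilon_3]$ on which the bracketing-number bound is valid. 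The main obstacle I anticipate is the bracket transfer to the squared-loss class: since $f_0 - f$ can change sign, squaring is not monotone in $f$, so one cannot simply square the original brackets and must instead construct envelopes via the Lipschitz bound of $x \mapsto x^2$ on $[-2sB, 2sB]$, verifying carefully that the resulting $L_1(P)$-gap remains $O(sB\,\epsilon)$ and tracking $s$- and $B$-dependence through the union bound to land on the stated rate.
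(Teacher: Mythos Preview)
Your proposal is correct and follows essentially the same route as the paper: bracket $\mathcal{C}^s_B$ in $L_1(P)$ via Corollary~\ref{cor:convexadditive_lp}, transfer the brackets to the squared-loss class, apply Hoeffding with a union bound over the bracket set, and balance $sB\epsilon$ against $s^2B^2\sqrt{\log N_{[]}/n}$ to obtain $\epsilon\asymp (sB)(s/n)^{2/5}$. The only difference is in the bracket transfer: the paper sets $\psi_L=\min(|h_L|,|h_U|)$, $\psi_U=\max(|h_L|,|h_U|)$ and uses $\psi_L^2\le h^2\le\psi_U^2$ together with $\int(\psi_U^2-\psi_L^2)\,dP\le\int|h_U^2-h_L^2|\,dP\le 2sB\epsilon$, whereas you build envelopes from the Lipschitz estimate $|(f_0-f)^2-(f_0-h_L)^2|\le 4sB\,|h_U-h_L|$; your construction is arguably cleaner since it sidesteps the sign issue that can arise when $h_L<0<h_U$ (where $\min(|h_L|,|h_U|)\le|h|$ need not hold pointwise), but the resulting rate and the role of the hypothesis $n\ge c_1 s\sqrt{sB}$ are identical.
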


\begin{proof}
Let $\mathcal{G}$ denote the off-centered set of convex functions, that is, $\mathcal{G} \equiv \mathcal{C}^s - f_0$. Note that if $h \in \mathcal{G}$, then $\| h \|_\infty = \| f_0 - f \|_\infty \leq 4 s B$.
There exists an $\epsilon$-bracketing of $\mathcal{G}$, and
by Corollary~\ref{cor:convexadditive_lp}, the bracketing has size at most $\log N_{[]}(2\epsilon, \mathcal{C}^s, L_1(P)) \leq s K^{**}\left( \frac{2sB}{\epsilon} \right)^{1/2}$. By Corollary~\ref{cor:convexbracket_ln}, we know that with probability at least $1-\delta$, $\|h_U - h_L\|_{L_1(P_n)} \leq \epsilon + \epsilon_{n,\delta}$ for all pairs $(h_U, h_L)$ in the bracketing, where $\epsilon_{n,\delta} = sB \sqrt{ \frac{K^{**} (2sB)^{1/2} \log \frac{2}{\delta}}{2 \epsilon^{1/2} n}}$. Corollary~\ref{cor:convexbracket_ln} necessitates $\epsilon \in (0, sB\epsilon_3]$ for some absolute constant $\epsilon_3$; we will verify that this condition holds for large enough $n$ when we set the actual value of $\epsilon$.
For a particular function $h \in \mathcal{G}$, we can construct $\psi_L \equiv \min( |h_U|, |h_L|)$ and $\psi_U \equiv \max( |h_U|, |h_L| )$ so that
\[
\psi_L^2 \leq h^2 \leq \psi_U^2.
\]
We can then bound the $L_1(P)$ norm of $\psi_U^2 - \psi_L^2$ as
\begin{align*}
\int (\psi_U^2(x) - \psi_L^2(x)) p(x)dx  &\leq  \int | h_U^2(x) - h_L^2(x)| p(x) dx \\
   &\leq \int | h_U(x) - h_L(x) | \, |h_U(x) + h_L(x)| p(x) dx \\
   &\leq 2sB \epsilon
\end{align*}

Now we can bound $\| h \|_n^2 - \| h \|_P^2$ as
\begin{align}
\frac{1}{n} \sum_{i=1}^n \psi_L(X_i)^2 - \E \psi_U(X)^2  \leq
    \| h \|_n^2 - \| h \|_P^2 \leq
  \frac{1}{n} \sum_{i=1}^n \psi_U(X_i)^2 - \E \psi_L(X)^2  \label{eqn:hpsi_bound}
\end{align}
Since 
$\psi_L(X_i)^2$ and $\psi_U(X_i)^2$ are bounded random variables with
upper bound $(sB)^2$, Hoeffding's inequality and union bound give that,
with probability at least $1-\delta$,, for all $\psi_L$ (and likewise $\psi_U$)
\[
\left| \frac{1}{n} \sum_{i=1}^n \psi_L(X_i)^2 - 
   \E \psi_L(X)^2 \right| \leq (sB)^2 \sqrt{ \frac{ sK^{**} (sB)^{1/2} \log \frac{2}{\delta}}{ \epsilon^{1/2} n} }
\]

Plugging this into equation~\eqref{eqn:hpsi_bound} above, we have that:
\begin{align*}
& \E \psi_L(X)^2 - \E \psi_U(X)^2 - 
(sB)^2 \sqrt{ \frac{ sK^{**} (sB)^{1/2} \log \frac{2}{\delta}}{ \epsilon^{1/2} n} } \\
 & \leq 
 \| h \|_n^2 - \| h \|_P^2 \leq
\E \psi_U(X)^2 - \E \psi_L(X)^2 + 
(sB)^2 \sqrt{ \frac{ sK^{**} (sB)^{1/2} \log \frac{2}{\delta}}{ \epsilon^{1/2} n} }.
\end{align*}
Using the $L_1(P)$ norm of $\psi_U^2 - \psi_L^2$ result, we have
\begin{align*}
-sB\epsilon - 
(sB)^2 \sqrt{ \frac{ sK^{**} (sB)^{1/2} \log \frac{2}{\delta}}{ \epsilon^{1/2} n} } \leq 
 \| h \|_n^2 - \| h \|_P^2 \leq
sB\epsilon + 
(sB)^2 \sqrt{ \frac{ sK^{**} (sB)^{1/2} \log \frac{2}{\delta}}{ \epsilon^{1/2} n} }
\end{align*}

We balance the terms by choosing $\epsilon = \left( \frac{ (sB)^2 sK^{**} (sB)^{1/2}}{n} \right)^{2/5}$. One can easily verify that $\epsilon \leq sB \epsilon_3$ condition needed by Corollary~\ref{cor:convexbracket_ln} is satisfied when $n \geq c_1 s \sqrt{sB}$ for some absolute constant $c_1$.
We have then that, with probability at least $1-\delta$,
\begin{align*}
\sup_{h \in \mathcal{G}} \big| \| h \|_n^2 - \| h \|_P^2  \big| \leq
  c B^3 \sqrt{ \frac{s^5 \log \frac{2}{\delta}}{n^{4/5}}}
\end{align*}

The theorem follows immediately.

\end{proof}

\begin{lemma}
\label{lem:remove_centering}
Let $f_0$ and $f^*$ be defined as in Section~\ref{sec:false_negative_proof_notations}. Define $\bar{f}^* = \frac{1}{n} \sum_{i=1}^n f^*(X_i)$.
Then, with probability at least $1 - 2\delta$,
\[
\Big | \| f_0 - f^* \|_n^2 - \| f_0 - f^* + \bar{f}^* \|_n^2 \Big| \leq
    c (sB)^2 \frac{1}{n} \log \frac{4}{\delta}
\]
\end{lemma}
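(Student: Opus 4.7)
The plan is to reduce the statement to an algebraic identity and two routine Hoeffding bounds. First I would expand the difference pointwise: writing $h(X_i) = f_0(X_i) - f^*(X_i)$ and treating $c = \bar f^*$ as a fixed scalar, the identity $(h+c)^2 - h^2 = 2ch + c^2$ averaged over the sample gives
\[
\|f_0 - f^* + \bar f^*\|_n^2 - \|f_0 - f^*\|_n^2
= 2\bar f^*\,(\bar f_0 - \bar f^*) + (\bar f^*)^2,
\]
where $\bar f_0 = n^{-1}\sum_i f_0(X_i)$. So, after a triangle inequality, the task reduces to bounding $|\bar f^*|$ and $|\bar f_0 - \bar f^*|$ (equivalently $|\bar f_0|$) separately.

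Next I would apply Hoeffding's inequality twice. Under the centering convention $\E f^*_k = 0$ built into the definition \eqref{eqn:convex_additive} of the $f^*_k$'s, we have $\E f^* = 0$, and by assumption A3 the summand $f^*(X_i)$ is bounded in absolute value by $sB$. Hoeffding then yields
\[
|\bar f^*| \;\leq\; sB\sqrt{2n^{-1}\log(2/\delta)}
\qquad \textrm{with probability }\geq 1-\delta,
\]
so in particular $(\bar f^*)^2 \leq 2(sB)^2 n^{-1}\log(2/\delta)$. For the cross term, the same Hoeffding argument applied to $f_0 - f^*$ (which is bounded by $2sB$ and has zero population mean, using the first-order optimality of $f^*$ with respect to adding a constant, or equivalently the centering of $f_0$ together with $\E f^*=0$) gives $|\bar f_0 - \bar f^*| \leq 2sB\sqrt{2n^{-1}\log(2/\delta)}$ with probability $\geq 1-\delta$.

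Finally I would combine the two bounds by a union bound. On the intersection, both probabilistic events hold, and
\[
\Big|\,2\bar f^*(\bar f_0 - \bar f^*) + (\bar f^*)^2\,\Big|
\leq 2|\bar f^*|\,|\bar f_0 - \bar f^*| + (\bar f^*)^2
\leq 10\,(sB)^2\,\frac{\log(2/\delta)}{n},
\]
which is of the claimed form $c(sB)^2 n^{-1}\log(4/\delta)$ and holds with probability at least $1-2\delta$.

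The proof is essentially computational: the only substantive point is the use of the centering conventions ($\E f^* = 0$ and $\E f_0 = 0$, or equivalently that the intercept has been absorbed into the model), which is exactly what upgrades the naive $O(n^{-1/2})$ bound on $|\bar f^*\bar f_0|$ to the sharper $O(n^{-1})$ rate in the statement. No new concentration machinery is needed beyond Hoeffding; the main subtlety to be careful about is this centering assumption, since without $\E f_0 = 0$ the term $2\bar f^*\bar f_0$ would only be $O(n^{-1/2})$ and the lemma would fail.
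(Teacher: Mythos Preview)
Your proposal is correct and follows essentially the same route as the paper: expand the square to get $2\bar f^*\bar f_0 - (\bar f^*)^2$, then apply Hoeffding separately to the two empirical means (using the centering $\E f_0 = \E f^* = 0$) and combine by a union bound. The only cosmetic difference is that you bound $|\bar f_0 - \bar f^*|$ whereas the paper bounds $|\bar f_0|$ directly, which is equivalent since $\E f^* = 0$; your closing remark about the centering assumption being the crux is exactly right.
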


\begin{proof}
We decompose the empirical norm as
\begin{align*}
\| f_0 - f^* + \bar{f}^* \|_n^2 &= \| f_0 - f^* \|_n^2 
    + 2 \langle f_0 - f^*, \bar{f}^* \rangle + \bar{f}^{*2} \\
  &= \| f_0 - f^* \|_n^2 + 2 \bar{f}^* \langle f_0 - f^*, \mathbf{1} \rangle_n + 
    \bar{f}^{*2} \\
  &= \| f_0 - f^* \|_n^2 + 2 \bar{f}^* \bar{f}_0 - \bar{f}^{*2}.
\end{align*}
Now
$\bar{f}^* = \frac{1}{n} \sum_{i=1}^n f^*(X_i)$ is the average of $n$ bounded mean-zero random variables and therefore, with probability at least $1-\delta$, $| \bar{f}^* | \leq 4 sB \sqrt{ \frac{1}{n} \log \frac{2}{\delta} }$.
The same reasoning likewise applies to $\bar{f}_0 = \frac{1}{n} \sum_{i=1}^n f_0(X_i)$.

Taking a union bound and we have that, with probability at least $1- 2\delta$, 
\begin{align*}
| \bar{f}^* | | \bar{f}_0 | &\leq c (sB)^2 \frac{1}{n} \log \frac{2}{\delta} \\
\bar{f}^{*2} &\leq c (sB)^2 \frac{1}{n} \log \frac{2}{\delta}
\end{align*}
Therefore, with probability at least $1 - 2\delta$,
\[
\|f_0 - f^*\|_n^2 - c (sB)^2 \frac{1}{n} \log \frac{2}{\delta} \leq
    \| f_0 - f^* + \bar{f}^* \|_n^2 \leq 
\|f_0 - f^*\|_n^2 + c (sB)^2 \frac{1}{n} \log \frac{2}{\delta}
\]
\end{proof}


 
\subsubsection{Supporting Lemma for Theorem~\ref{thm:acdc_faithful}}

\begin{lemma}
\label{lem:acdc_derivative_bound}
Let $f : [0,1]^p \rightarrow \R$ be a twice differentiable function. Suppose $p(\mathbf{x})$ is a density on $[0,1]^p$ such that $\partial_{x_k} p(\mathbf{x}_{-k} \given x_k)$ and $\partial_{x_k}^2 p(\mathbf{x}_{-k} \given x_k)$ are continuous as functions of $x_k$. Let $\phi(\mathbf{x}_{-k})$ be a continuous function not dependent on $x_k$.

Then, $h^*_k(x_k) \equiv \E[ f(X) - \phi(X_{-k}) \given x_k]$ is twice
differentiable and has a second derivative lower 
bounded away from $-\infty$.
\end{lemma}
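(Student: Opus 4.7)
The plan is to express $h^*_k(x_k)$ as an integral against the conditional density and then justify differentiating twice under the integral. Writing
\begin{equation*}
h^*_k(x_k) \;=\; \int_{[0,1]^{p-1}} \bigl(f(x_k,\mathbf{x}_{-k}) - \phi(\mathbf{x}_{-k})\bigr)\, p(\mathbf{x}_{-k}\given x_k)\, d\mathbf{x}_{-k},
\end{equation*}
I would first note that on the compact cube $[0,1]^p$ the functions $f$, $\partial_{x_k} f$, $\partial^2_{x_k} f$, and $\phi$ are all bounded and continuous (since $f$ is twice differentiable and $\phi$ is continuous on a compact set). By hypothesis, $p(\mathbf{x}_{-k}\given x_k)$, $\partial_{x_k} p(\mathbf{x}_{-k}\given x_k)$, and $\partial^2_{x_k} p(\mathbf{x}_{-k}\given x_k)$ are continuous in all variables, hence uniformly bounded on $[0,1]^p$.

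Next I would invoke the standard dominated-convergence justification for differentiation under the integral sign. For the first derivative, the integrand $(f-\phi)\, p(\cdot\given x_k)$ has $x_k$-partial equal to $\partial_{x_k} f \cdot p(\mathbf{x}_{-k}\given x_k) + (f-\phi)\cdot \partial_{x_k} p(\mathbf{x}_{-k}\given x_k)$, which is dominated by a constant (independent of $x_k$) integrable against Lebesgue measure on $[0,1]^{p-1}$. Hence $h^*_k$ is differentiable. Repeating the argument one more time, using continuity and uniform boundedness of $\partial^2_{x_k} f$, $\partial_{x_k} p$, and $\partial^2_{x_k} p$, gives
\begin{equation*}
h^{*\prime\prime}_k(x_k) = \int \Bigl[ \partial^2_{x_k} f \cdot p(\mathbf{x}_{-k}\given x_k) + 2\,\partial_{x_k} f \cdot \partial_{x_k} p(\mathbf{x}_{-k}\given x_k) + (f-\phi)\cdot \partial^2_{x_k} p(\mathbf{x}_{-k}\given x_k)\Bigr] d\mathbf{x}_{-k}.
\end{equation*}

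Finally, I would bound this second derivative from below. Each of the three terms in the integrand is a product of two continuous, hence uniformly bounded, factors on $[0,1]^p$; the second and third terms have no fixed sign but are bounded in absolute value by some finite constant $M$ (depending on $f$, $\phi$, and $p$), and the first term is at least $-\|\partial^2_{x_k} f\|_\infty \cdot \|p\|_\infty$. Integrating over the bounded region $[0,1]^{p-1}$ yields a finite constant lower bound for $h^{*\prime\prime}_k(x_k)$, uniformly in $x_k$. I do not expect any real obstacle here: the argument is essentially bookkeeping, the only substantive ingredients being the compactness of the domain and the continuity hypotheses on the derivatives of $p(\mathbf{x}_{-k}\given x_k)$, which ensure both that differentiation under the integral is valid and that the resulting integrand is uniformly bounded.
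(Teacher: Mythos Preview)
Your proposal is correct and follows essentially the same approach as the paper: write $h^*_k$ as an integral against the conditional density, differentiate twice under the integral (justified by boundedness of the integrand on the compact cube), and bound the resulting expression using that each factor is continuous on $[0,1]^p$. The only cosmetic difference is that the paper, having the convex application in mind, notes that the term $\partial^2_{x_k} f \cdot p(\mathbf{x}_{-k}\given x_k)$ is nonnegative and bounds only the remaining two terms, whereas you bound all three terms from below; your version is actually slightly cleaner since the lemma as stated does not assume convexity of $f$.
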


\begin{proof}
We can write
\[
h^*_k(x_k) = \int_{\mathbf{x}_{-k}} \big(f(\mathbf{x}) - \phi(\mathbf{x}_{-k})\big) p(\mathbf{x}_{-k} \given x_k) d \mathbf{x}_{-k}
\]
The integrand is bounded because it is a sum-product of continuous functions over a compact set. Therefore, we can differentiate under the integral and derive that
\begin{align*}
\partial_{x_k} h^*_k(x_k) &= 
    \int_{\mathbf{x}_{-k}} f'(\mathbf{x}) p(\mathbf{x}_{-k} \given x_k) + (f(\mathbf{x}) - \phi(\mathbf{x}_{-k}) p'(\mathbf{x}_{-k} \given x_k) d \mathbf{x}_{-k}\\
\partial_{x_k}^2 h^*_k(x_k) &= 
    \int_{\mathbf{x}_{-k}} f''(\mathbf{x}) p(\mathbf{x}_{-k} \given x_k) + 2f'(\mathbf{x}) p'(\mathbf{x}_{-k} \given x_k)  + (f(\mathbf{x}) - \phi(\mathbf{x}_{-k}) p''(\mathbf{x}_{-k} \given x_k) d \mathbf{x}_{-k}
\end{align*}
where we have used the shorthand $f'(\mathbf{x}), p'(\mathbf{x}_{-k}
\given x_k)$ to denote $\partial_{x_k} f(\mathbf{x})$, $\partial_{x_k}
p(\mathbf{x}_{-k} \given x_k)$, etc.

This proves that $h^*_k(x_k)$ is twice-differentiable. To see that the second derivative is lower bounded, we note that $f''(\mathbf{x}) p(\mathbf{x}_{-k} \given x_k)$ is non-negative and the other terms in the second-derivative are all continuous functions on a compact set and thus bounded. 
\end{proof}

\begin{lemma}
\label{lem:additive_uniqueness}
Let $p(\mathbf{x})$ be a positive density over $[0,1]^p$. Let $\phi(\mathbf{x}) = \sum_{j=1}^p \phi_j(x_j)$ be an additive function.

Suppose that for all $j$, $\phi_j(x_j)$ is bounded and $\E \phi_j(X_j) = 0$. Suppose for some $j$, $\E \phi_j(X_j)^2 > 0$, then it must be that $\E \phi(X)^2 > 0$. 
\end{lemma}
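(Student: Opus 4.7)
The plan is to argue by contradiction. Suppose that $\E \phi(X)^2 = 0$ even though $\E \phi_{j_0}(X_{j_0})^2 > 0$ for some $j_0$. From $\E \phi(X)^2 = 0$ we get that $\phi(X) = 0$ with probability one under $p$, i.e., the set $A = \{\mathbf{x} \in [0,1]^p : \phi(\mathbf{x}) \neq 0\}$ satisfies $\int_A p(\mathbf{x})\, d\mathbf{x} = 0$. Since $p$ is strictly positive \emph{pointwise} on $[0,1]^p$, the measurable set $A$ must have Lebesgue measure zero. Hence
\begin{equation*}
\sum_{k=1}^p \phi_k(x_k) = 0 \quad\text{for Lebesgue-a.e.\ } \mathbf{x} \in [0,1]^p.
\end{equation*}

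Next, I would fix the index $j_0$ and integrate the above identity over $\mathbf{x}_{-j_0} \in [0,1]^{p-1}$ against Lebesgue measure. By Fubini (applicable since each $\phi_k$ is bounded), for Lebesgue-a.e.\ $x_{j_0}$,
\begin{equation*}
\phi_{j_0}(x_{j_0}) \;=\; -\sum_{k \neq j_0} \int_{[0,1]^{p-1}} \phi_k(x_k) \, d\mathbf{x}_{-j_0} \;=\; -\sum_{k \neq j_0} \int_0^1 \phi_k(x_k)\, dx_k \;=:\; -C,
\end{equation*}
a constant independent of $x_{j_0}$. Thus $\phi_{j_0}$ equals the constant $-C$ Lebesgue-a.e., and hence also $p_{j_0}$-a.e., where $p_{j_0}$ is the marginal density of $X_{j_0}$ (which is itself positive since $p$ is).

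Now I would invoke the mean-zero normalization: $0 = \E \phi_{j_0}(X_{j_0}) = -C \int p_{j_0}(x_{j_0})\, dx_{j_0} = -C$, so $C = 0$ and therefore $\phi_{j_0} = 0$ almost everywhere. This gives $\E \phi_{j_0}(X_{j_0})^2 = 0$, contradicting the hypothesis. The only step requiring any care is the transfer from ``$p$-almost everywhere'' to ``Lebesgue almost everywhere,'' which is where the hypothesis that $p$ is strictly positive on $[0,1]^p$ (as stated, not merely a.e.) is essential; everything else is Fubini together with the centering constraints.
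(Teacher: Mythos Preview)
Your proof is correct and complete. It does, however, take a genuinely different route from the paper's argument. The paper argues directly: since $\E\phi_j(X_j)=0$ and $\E\phi_j(X_j)^2>0$, the set $A_+=\{x_j:\phi_j(x_j)>0\}$ has positive (marginal) probability, and since $\phi_{-j}\equiv\sum_{k\neq j}\phi_k$ also has mean zero, the set $B_+=\{\mathbf{x}_{-j}:\phi_{-j}(\mathbf{x}_{-j})\geq 0\}$ has positive probability; then the product set $A_+\times B_+$ has positive $p$-probability because $p$ is strictly positive, and on it $\phi>0$, forcing $\E\phi(X)^2>0$. Your approach instead passes from $p$-a.e.\ to Lebesgue-a.e.\ (using positivity of $p$), then integrates out $\mathbf{x}_{-j_0}$ via Fubini to show $\phi_{j_0}$ is Lebesgue-a.e.\ constant, and finally uses the centering constraint to pin that constant at zero. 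The paper's argument is a quick ``find a region where the sum is strictly positive'' trick, exploiting the additive (product) structure geometrically; your argument is more analytic and yields slightly more, namely that every component $\phi_k$ must vanish a.e.\ (by applying the same integration to each coordinate), which is the natural identifiability statement for additive models. Either approach uses positivity of $p$ in an essential way---the paper to guarantee the rectangle $A_+\times B_+$ carries $p$-mass, you to transfer from $p$-null to Lebesgue-null.
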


\begin{proof}

Suppose, for sake of contradiction, that

\begin{align*}
\E \phi(X)^2 = \E \left( \phi_j(X_j) + \phi_{-j} (X_{-j}) \right)^2 = 0.
\end{align*}

Let $A_+ = \{ x_j \,:\, \phi_j(x_j) \geq 0 \}$. Since $\E \phi_j(X_j) = 0$, $\E \phi_j(X_j)^2 > 0$, and $\phi_j$ is bounded, it must be that both $A_+$ has probability greater than 0.

Now, define $B_+ = \{ x_{-j} \,:\, \phi_{-j}(x_{-j}) \geq 0 \}$. $B_+$ then must have probability greater than 0 as well. 

Since $p(\mathbf{x})$ is a positive density, the set $A_+ \times B_+$ must have a positive probability. However, $\phi > 0$ on $A_+ \times B_+ \subset [0,1]^p$ which implies $\E \phi(X)^2 > 0$.

\end{proof}

\subsubsection{Concentration of Measure}

A sub-exponential random is the square of a sub-Gaussian random
variable \cite{vershynin2010introduction}.

\begin{proposition} (Subexponential Concentration
  \cite{vershynin2010introduction})
Let $X_1,...,X_n$ be zero-mean independent subexponential random
variables with subexponential scale $K$. 
Then
\[
P( | \frac{1}{n} \sum_{i=1}^n X_i | \geq \epsilon) \leq
	2 \exp \left[ -c n \min\left( \frac{\epsilon^2}{K^2}, \frac{\epsilon}{K} \right) \right]
\]
where $c > 0$ is an absolute constant.
\end{proposition}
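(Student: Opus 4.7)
The plan is to establish this Bernstein-type inequality by the classical Chernoff-bound argument, exploiting the moment generating function characterization of sub-exponential random variables. The stated proposition is a textbook result (Vershynin 2010), so the sketch below is essentially the standard derivation.

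First I would invoke the equivalent characterization of sub-exponentiality: if $X$ is zero-mean sub-exponential with scale $K$, then there exist absolute constants $c_1, c_2 > 0$ such that $\mathbb{E} e^{tX} \leq e^{c_1 K^2 t^2}$ for all $|t| \leq c_2/K$. This follows from Taylor expansion of $e^{tX}$, bounding each moment $\mathbb{E}|X|^m \leq m! K^m$ via the tail definition, and summing a geometric series for $|t|$ small enough. By independence and centering,
\[
\mathbb{E} \exp\!\Bigl(t \sum_{i=1}^n X_i\Bigr) = \prod_{i=1}^n \mathbb{E} e^{t X_i} \leq \exp\bigl(c_1 n K^2 t^2\bigr)
\]
for all $|t| \leq c_2/K$.

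Next, apply Markov's inequality to obtain, for any $t \in [0, c_2/K]$,
\[
P\!\Bigl(\tfrac{1}{n}\sum_{i=1}^n X_i \geq \epsilon \Bigr) \leq \exp\!\bigl(-t n \epsilon + c_1 n K^2 t^2\bigr),
\]
and optimize over $t$. The unconstrained minimizer is $t^* = \epsilon/(2 c_1 K^2)$. In the \emph{sub-Gaussian regime} $\epsilon \leq 2 c_1 c_2 K$, we have $t^* \leq c_2/K$, so plugging in yields the bound $\exp\!\bigl(-n \epsilon^2/(4 c_1 K^2)\bigr)$. In the \emph{sub-exponential regime} $\epsilon > 2 c_1 c_2 K$, the unconstrained optimum falls outside the admissible range, so we take the boundary value $t = c_2/K$, yielding $\exp\!\bigl(-n c_2 \epsilon/K + n c_1 c_2^2\bigr)$; since in this regime the linear term dominates the constant, this is bounded by $\exp(-c' n \epsilon/K)$ for an appropriate $c' > 0$. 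Combining the two cases produces the $\min\bigl(\epsilon^2/K^2,\ \epsilon/K\bigr)$ form in the exponent, with a single absolute constant $c$ chosen as the minimum of the two regime-specific constants.

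Finally, the two-sided bound follows by applying the same argument to $-\sum_i X_i$ (also a sum of independent zero-mean sub-exponentials with scale $K$) and taking a union bound, which accounts for the factor $2$ in front. The main bookkeeping obstacle is simply choosing the universal constants so that the Gaussian and exponential regimes join cleanly at $\epsilon \asymp K$ and that both regime-specific bounds admit a common constant $c$; the rest is a routine Chernoff computation.
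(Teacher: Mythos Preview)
Your proof sketch is correct and is essentially the standard Chernoff-bound derivation found in Vershynin's notes, which is precisely what the paper cites. The paper itself does not supply a proof of this proposition at all: it is quoted verbatim as a known result from \cite{vershynin2010introduction} and then immediately applied, so there is nothing in the paper to compare your argument against beyond the citation. Your outline---MGF bound for $|t|\le c_2/K$, Markov, optimize in $t$ with the two-regime case split, union bound for the two-sided version---matches the textbook proof the paper is pointing to.
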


For uncentered subexponential random variables, we can use the following fact. If $X_i$ subexponential with scale $K$, then $X_i - \E[X_i]$ is also subexponential with scale at most $2K$.
Restating, we can set
\[
c \min\left( \frac{\epsilon^2}{K^2}, \frac{\epsilon}{K} \right) = \frac{1}{n} \log \frac{1}{\delta}.
\]
Thus, with probability at least $1-\delta$, the deviation is at most
\[
K \max\left( \sqrt{\frac{1}{cn} \log \frac{C}{\delta}},  \frac{1}{cn} \log \frac{C}{\delta} \right).
\]

\begin{corollary}
Let $W_1,...,W_n$ be $n$ independent sub-Gaussian random variables with sub-Gaussian scale $\sigma$. 
Then, for all $n > n_0$, with probability at least $1- \frac{1}{n}$,
\[
\frac{1}{n} \sum_{i=1}^n W_i^2 \leq c \sigma^2 .
\]
\end{corollary}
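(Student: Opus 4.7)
The plan is to apply the subexponential concentration proposition stated immediately above the corollary to the squared variables $W_i^2$. First I would observe that, by the definition quoted just before the proposition, each $W_i^2$ is a sub-exponential random variable with scale $K = c_0 \sigma^2$ for some absolute constant $c_0$, since $W_i$ is sub-Gaussian with scale $\sigma$. The standard moment bound for sub-Gaussian random variables also gives $\E[W_i^2] \leq c_1 \sigma^2$ for some absolute constant $c_1$.

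Next I would center the variables: let $Z_i = W_i^2 - \E[W_i^2]$. These are mean-zero, independent, and sub-exponential with scale at most $2K = 2 c_0 \sigma^2$, so the subexponential concentration proposition applies. Using the restated form with $\delta = 1/n$, we get that with probability at least $1 - 1/n$,
\[
\Bigl| \frac{1}{n} \sum_{i=1}^n Z_i \Bigr|
   \leq 2 c_0 \sigma^2 \max\Bigl( \sqrt{ \tfrac{1}{c n} \log(C n)}, \, \tfrac{1}{cn} \log(Cn) \Bigr).
\]
For $n$ sufficiently large (i.e., $n \geq n_0$ for an absolute constant $n_0$), both quantities inside the max are bounded by $1$, so the deviation is at most $2 c_0 \sigma^2$.

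Combining this with $\frac{1}{n}\sum_i \E[W_i^2] \leq c_1 \sigma^2$, I conclude that
\[
\frac{1}{n} \sum_{i=1}^n W_i^2
   \leq c_1 \sigma^2 + 2 c_0 \sigma^2
   \leq c\, \sigma^2
\]
with probability at least $1 - 1/n$, where $c = c_1 + 2c_0$ is an absolute constant. This completes the argument.

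There is essentially no hard step here; the only thing to watch is the choice of $n_0$, which must be large enough that the $\sqrt{\log(Cn)/n}$ and $\log(Cn)/n$ terms from the sub-exponential tail are both at most some absolute constant, so that the random deviation is absorbed into the final $c \sigma^2$ bound rather than dominating it. The result is then a clean consequence of the quoted concentration inequality and the standard fact that squares of sub-Gaussians are sub-exponential.
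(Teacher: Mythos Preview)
Your proposal is correct and follows essentially the same approach as the paper: both apply the sub-exponential concentration inequality to $W_i^2$ with $\delta = 1/n$, use that $\E W_i^2 \leq c\sigma^2$ for sub-Gaussians, and choose $n_0$ large enough that the deviation term is bounded by an absolute constant times $\sigma^2$. The only cosmetic difference is that you explicitly center and double the scale, whereas the paper invokes the uncentered form noted just before the corollary.
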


\begin{proof}
Using the subexponential concentration inequality, we know that, with probability at least $1-\frac{1}{n}$, 

\[
\left| \frac{1}{n} \sum_{i=1}^n W_i^2 - \E W^2\right| \leq \sigma^2 \max\left( \sqrt{\frac{1}{cn} \log \frac{C}{\delta}}, \frac{1}{cn}\log \frac{C}{\delta} \right).
\]

First, let $\delta = \frac{1}{n}$. Suppose $n$ is large enough such that $ \frac{1}{cn} \log Cn < 1$. Then, we have, with probability at least $1-\frac{1}{n}$,
\begin{align*}
 \frac{1}{n} \sum_{i=1}^n W_i^2 &\leq c\sigma^2 \Big(1+\sqrt{\frac{1}{cn} \log Cn}\Big) \\
		&\leq 2 c \sigma^2.
 \end{align*}
\end{proof}

\subsubsection{Sampling Without Replacement}

\begin{lemma} (\cite{serfling1974probability}) 
Let $x_1,..., x_N$ be a finite list, $\bar{x} = \mu$. Let $X_1,...,X_n$ be sampled from $x$ without replacement. 

Let $b = \max_i x_i$ and $a = \min_i x_i$. Let $r_n = 1- \frac{n-1}{N}$. Let $S_n = \sum_i X_i$.
Then we have that
\[
\P( S_n - n \mu \geq n \epsilon) \leq \exp\left( - 2 n \epsilon^2 \frac{1}{r_n (b-a)^2}\right).
\]
\end{lemma}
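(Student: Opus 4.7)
The plan is to use the exponential Chernoff--Markov method, for which the central task is to bound the moment generating function (MGF) of $S_n$ under sampling without replacement in a way that captures the improvement factor $r_n$ relative to sampling with replacement. Centering $Z_i = x_i - \mu$ so that $\sum_{i=1}^N Z_i = 0$ and $\max_i Z_i - \min_i Z_i \leq b-a$, I would begin from
\begin{align*}
\P(S_n - n\mu \geq n\epsilon) \;\le\; e^{-\lambda n \epsilon}\,\E\bigl[\exp\bigl(\lambda \textstyle\sum_{i=1}^n Z_{\pi(i)}\bigr)\bigr],
\end{align*}
where $\pi$ is a uniformly random permutation of $\{1,\ldots,N\}$ and $\lambda > 0$ is to be optimized at the end.

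The heart of the argument is to prove the MGF bound
\begin{align*}
\E\bigl[\exp\bigl(\lambda\textstyle\sum_{i=1}^n Z_{\pi(i)}\bigr)\bigr] \;\le\; \exp\bigl(\tfrac{1}{8}\lambda^2 n\, r_n (b-a)^2\bigr).
\end{align*}
My intended route is a martingale decomposition: reveal $\pi(1),\pi(2),\ldots$ one coordinate at a time, set $\mathcal{F}_k = \sigma(\pi(1),\ldots,\pi(k))$, and write $S_n - \E S_n = \sum_{k=1}^n D_k$ with $D_k = \E[S_n \mid \mathcal{F}_k] - \E[S_n \mid \mathcal{F}_{k-1}]$. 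Exchangeability implies that, conditional on $\mathcal{F}_{k-1}$, the next draw is uniform over the $N-k+1$ remaining elements, which forces $D_k$ to take values in an interval of width at most $\tfrac{N-n}{N-k+1}(b-a)$. Applying a conditional Hoeffding lemma at each step and multiplying the resulting per-step MGF estimates yields a product form, and the telescoping identity $\sum_{k=1}^n \bigl(\tfrac{N-n}{N-k+1}\bigr)^2$ reduces, after a short calculation using $\sum_{j=N-n+1}^N 1/j^2$ and Abel summation, to a quantity bounded by $n\,r_n = n(1 - \tfrac{n-1}{N})$ up to the constant $1/8$. Optimizing the resulting exponent over $\lambda$ at $\lambda = 4\epsilon/(r_n(b-a)^2)$ then balances the two terms and delivers exactly $-2n\epsilon^2/(r_n(b-a)^2)$, as required.

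The main obstacle is capturing the factor $r_n$ itself, rather than a weaker constant. Hoeffding's classical comparison lemma --- that $\E[\phi(S_n^{\text{w/o}})] \leq \E[\phi(S_n^{\text{with}})]$ for every convex $\phi$ --- immediately gives the analogous bound with $r_n$ replaced by $1$, which is already enough for the application in Theorem~\ref{thm:false_positive}. Obtaining the sharper $r_n$ factor requires genuinely exploiting the shrinking conditional ranges along the martingale above, or, equivalently, the symmetric identity $\sum_{i=1}^n Z_{\pi(i)} = -\sum_{i=n+1}^N Z_{\pi(i)}$ combined with the observation that the smaller of these two complementary sums enjoys an automatically tighter bound. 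Either route reflects the same conceptual point: sampling $n$ out of $N$ without replacement effectively supplies $n\,r_n$, not $n$, independent degrees of freedom, and this gain must be tracked carefully through whichever bookkeeping device one chooses.
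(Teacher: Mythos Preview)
The paper does not prove this lemma; it is stated with a citation to Serfling (1974) and used as a black box to derive the corollary that follows. So there is no ``paper's own proof'' to compare against---only Serfling's original argument.

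Your Chernoff--plus--martingale strategy is the right genre, and it is essentially how Serfling proceeds. There is, however, a gap in the sketch as written. With the forward Doob martingale $D_k = \E[S_n\mid\mathcal F_k]-\E[S_n\mid\mathcal F_{k-1}]$, the conditional range of $D_k$ is (up to an index shift) $\tfrac{N-n}{N-k}(b-a)$, so the conditional Hoeffding lemma yields an exponent proportional to $(N-n)^2\sum_{k=1}^n (N-k)^{-2}$. You then need this sum to be bounded by $n\,r_n = n(N-n+1)/N$, and the appeal to ``Abel summation'' does not obviously deliver that; a naive telescoping bound $\sum_j 1/j^2 \le \sum_j 1/(j(j-1))$ is too loose here. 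The inequality is in fact true, but it requires a more careful argument than your sketch indicates. Serfling's own proof sidesteps this by working with the \emph{reverse} martingale: the sample mean $\bar X_k = S_k/k$ is a reverse martingale in $k$, and bounding its increments from $k=N$ down to $k=n$ produces the factor $r_n$ cleanly.

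You are right, though, that for the application in this paper the distinction is moot: the corollary immediately discards $r_n$ by bounding $r_n \le 1$, so Hoeffding's convexity comparison (sampling without replacement is no worse than with replacement) already gives everything the paper needs.
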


\begin{corollary}
\label{cor:serfling}
Suppose $\mu = 0$. 
\[
\P\left( \frac{1}{N} S_n \geq \epsilon\right) \leq \exp\left( -2 N \epsilon^2 \frac{1}{(b-a)^2}\right)
\]
And, by union bound, we have that
\[
\P\left( | \frac{1}{N} S_n| \geq \epsilon\right) \leq 2 \exp\left( -2 N \epsilon^2 \frac{1}{(b-a)^2}\right)
\]

\end{corollary}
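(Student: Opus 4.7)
The plan is to derive the one-sided bound by direct invocation of the Serfling lemma with $\mu = 0$, and then obtain the two-sided bound by a symmetry argument plus union bound. Setting $\mu = 0$ in the lemma gives $\P(S_n \geq n\epsilon') \leq \exp(-2n\epsilon'^2/(r_n(b-a)^2))$ for any $\epsilon' > 0$. To rewrite this as a bound on $\P(\tfrac{1}{N}S_n \geq \epsilon)$, I would substitute $\epsilon' = \tfrac{N}{n}\epsilon$, yielding exponent $-\frac{2N^2\epsilon^2}{n\, r_n\, (b-a)^2}$.

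To reduce this exponent to the form stated in the corollary, I would use the elementary inequality $n r_n = n\bigl(1-(n-1)/N\bigr) \leq n \leq N$, so that $N^2/(n r_n) \geq N$; the stated bound $\exp(-2N\epsilon^2/(b-a)^2)$ then follows (and is in fact a weakening of the Serfling bound). For the two-sided statement, I would apply the one-sided inequality to the negated population $-x_1,\ldots,-x_N$, which still has mean zero and identical range $b-a$, yielding the same bound on $\P(-\tfrac{1}{N}S_n \geq \epsilon)$. A union bound with the original one-sided estimate then produces the factor of $2$.

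There is essentially no obstacle: this corollary is a purely algebraic consequence of the preceding lemma. The only nontrivial observation is the rescaling step whose true exponent $N^2/(n r_n)$ is strictly larger than the stated $N$ whenever $n < N$; the corollary intentionally presents a convenient but slightly loose form, and that form is sufficient for its downstream use, namely bounding $|\tfrac{1}{n}\hat{r}^\top \mathbf{1}_{\pi_k(i':n)}|$ in the proof of Theorem~\ref{thm:false_positive}, where the sample size ($n-i'+1$) and population size ($n$) play the roles of $n$ and $N$ respectively.
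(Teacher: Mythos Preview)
Your proof is correct and follows essentially the same route as the paper: rewrite $\P(\tfrac{1}{N}S_n \geq \epsilon)$ as $\P(S_n \geq \tfrac{N}{n}\,n\epsilon)$, apply Serfling's lemma to obtain the exponent $-2N^2\epsilon^2/(n r_n (b-a)^2)$, and then weaken via $n r_n \leq N$ (the paper uses the equivalent pair of observations $r_n \leq 1$, $n \leq N$). The two-sided bound is handled in the paper only by the phrase ``by union bound'' in the statement itself; your symmetry argument on the negated population is the standard way to justify that step.
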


A simple restatement is that with probability at least $1- \delta$, the deviation $| \frac{1}{N} S_n|$ is at most $ (b-a) \sqrt{ \frac{1}{2N} \log \frac{2}{\delta}}$.

\begin{proof}
\[
\P\left( \frac{1}{N} S_n \geq \epsilon\right) = \P\left( S_n \geq \frac{N}{n} n \epsilon\right) \leq \exp\left( - 2 n \frac{N^2}{n^2} \epsilon^2 \frac{1}{r_n (b-a)^2} \right).
\]
We note that $r_n \leq 1$ always, and $n \leq N$ always.   Thus,
\[
\exp\left( - 2 n \frac{N^2}{n^2} \epsilon^2 \frac{1}{r_n (b-a)^2} \right)  \leq \exp\left( - 2 N \epsilon^2 \frac{1}{(b-a)^2}\right)
\]
completing the proof.
\end{proof}

\subsubsection{Bracketing Numbers for Convex Functions}

\begin{definition}
Let $\mathcal{C}$ be a set of functions. For a given $\epsilon$ and metric $\rho$ (which we take to be $L_2$ or $L_2(P)$), we define a \textit{bracketing} of $\mathcal{C}$ to be a set of pairs of functions $\{ (f_L, f_U) \}$ satisfying (1) $\rho( f_L, f_U) \leq \epsilon$ and (2) for any $f \in \mathcal{C}$, there exist a pair $(f_L, f_U)$ where $f^U \geq f \geq f^L$. 

We let $N_{[]}(\epsilon, \mathbf{C}, \rho)$ denote the size of the smallest bracketing of $\mathcal{C}$
\end{definition}

\begin{proposition} (Proposition 16 in \cite{kim2014global})
\label{prop:convexbracket}
Let $\mathcal{C}$ be the set of convex functions supported on $[-1, 1]^d$ and uniformly bounded by $B$. Then there exist constants $\epsilon_3$ and $K^{**}$, dependent on $d$, such that
\[
\log N_{[]} (2\epsilon, \mathcal{C}, L_2) \leq K^{**} \left( \frac{2B}{\epsilon} \right)^{d/2}
\]
for all $\epsilon \in (0, B \epsilon_3]$.
\end{proposition}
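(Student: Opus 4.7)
The plan is to reduce the $L_2$ bracketing bound to Bronshtein's classical $L_\infty$ covering bound for uniformly Lipschitz convex functions on compact sets, using a boundary-strip correction that exploits the forgiveness of the $L_2$ norm near $\partial [-1,1]^d$. Convex functions bounded by $B$ need not be uniformly Lipschitz on all of $[-1,1]^d$ (their slopes can blow up at the boundary), so one cannot apply Bronshtein directly; this is the source of all the technical friction.

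For a boundary parameter $\delta \in (0, 1/2)$, let $K_\delta = [-1+\delta, 1-\delta]^d$ and $S_\delta = [-1,1]^d \setminus K_\delta$, so that the Lebesgue measure satisfies $|S_\delta| = O(\delta)$. On $K_\delta$, every $f \in \mathcal{C}$ is automatically Lipschitz with constant at most $CB/\delta$ by standard convex analysis, so Bronshtein's theorem provides an $\eta$-net $\{\tilde f_j\}$ in $L_\infty(K_\delta)$ of log-cardinality $\lesssim (B/\eta)^{d/2}$, with constants depending on $d$ (and, at this stage, on $\delta$). Convert this net into brackets by setting
\[
f_U = \tilde f_j + \eta \text{ on } K_\delta, \quad f_U = B \text{ on } S_\delta, \qquad f_L = \tilde f_j - \eta \text{ on } K_\delta, \quad f_L = -B \text{ on } S_\delta.
\]
Since $\|f\|_\infty \le B$ on $S_\delta$ and $\|f - \tilde f_j\|_\infty \le \eta$ on $K_\delta$, we have $f_L \le f \le f_U$ pointwise, and the bracket width satisfies
\[
\|f_U - f_L\|_{L_2}^2 \;\le\; (2\eta)^2 \cdot 2^d \;+\; (2B)^2 \cdot C \delta.
\]
Balancing the two terms by choosing $\delta \asymp \eta^2/B^2$ yields an $L_2$ bracket width of order $\eta$, while the number of brackets remains of log-order $(B/\eta)^{d/2}$. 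Renaming $\eta$ as $\epsilon$ then produces the stated bound.

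The main obstacle is precisely this last step: Bronshtein's constant depends on the Lipschitz bound, which is $CB/\delta$ on $K_\delta$. A naive application of Bronshtein in the form $\log N_\infty \lesssim (LD/\eta)^{d/2}$ with $L \sim B/\delta$ and $D \sim 1$ would give a cover of log-size $(B/(\delta\eta))^{d/2}$, and the subsequent optimization in $\delta$ would worsen the exponent $d/2$. To preserve the sharp exponent, one must use a more refined construction, such as a dyadic decomposition of $[-1,1]^d$ into concentric shells $S_j = K_{2^{-j}} \setminus K_{2^{-j+1}}$, applying Bronshtein separately on each shell with shell-dependent precision $\eta_j$ and summing the covering costs along with the $L_2$ errors $\eta_j^2 |S_j|$. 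The precisions $\eta_j$ must be tuned so that both $\sum_j \eta_j^2 |S_j| \lesssim \epsilon^2$ and $\sum_j (2^j B/\eta_j)^{d/2} \lesssim (B/\epsilon)^{d/2}$; this is possible because the total variation of convex functions bounded by $B$ is controlled, which makes their $L_2$ mass near the boundary summable in a way compatible with the Bronshtein rate. Verifying these summations is the delicate core of the proof, and is essentially the content of the Kim–Samworth argument.
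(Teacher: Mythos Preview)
The paper does not prove this proposition at all: it is quoted verbatim as Proposition~16 of \cite{kim2014global} and used as a black box, with no argument supplied. So there is nothing in the paper to compare your proposal against.

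That said, your sketch is a fair outline of the actual Kim--Samworth strategy. You correctly identify the central obstruction---that bounded convex functions on $[-1,1]^d$ are only Lipschitz on interior compacta $K_\delta$ with constant $\sim B/\delta$, so a single application of Bronshtein's $L_\infty$ covering bound loses the sharp exponent---and you correctly name the fix: a dyadic peeling into shells $K_{2^{-j}}\setminus K_{2^{-j+1}}$, with shell-adapted precisions $\eta_j$ balanced so that both the total $L_2$ bracket width and the summed log-covering numbers stay at order $(B/\epsilon)^{d/2}$. This is exactly the machinery Kim and Samworth build (their argument also routes through an intermediate bracketing of the class of convex functions with bounded total variation, which is what makes the shell-by-shell sum converge). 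Your final sentence acknowledges that the actual verification of the two sums is where all the work lies; as written, your proposal is an accurate roadmap but not a proof, since you have not exhibited a choice of $\eta_j$ and checked the two competing series.
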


It is trivial to extend Kim and Samworth's result to the $L_2(P)$ norm for an absolutely continuous distribution $P$.

\begin{proposition}
\label{prop:convexbracket_lp}
Let $P$ be a distribution with a density $p$. Let $\mathcal{C}, B, \epsilon_3, K^{**}$ be defined as in Proposition~\ref{prop:convexbracket}. Then,
\[
\log N_{[]} (2\epsilon, \mathcal{C}, L_1(P)) \leq K^{**} \left( \frac{2B}{\epsilon} \right)^{d/2}
\]
for all $\epsilon \in (0, B\epsilon_3]$.
\end{proposition}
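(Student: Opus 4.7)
\textbf{Plan for Proposition~\ref{prop:convexbracket_lp}.} The claim is a routine extension of the Kim--Samworth Lebesgue-$L_2$ bracketing bound (Proposition~\ref{prop:convexbracket}) to the $L_1(P)$ metric. The plan is to take a Kim--Samworth $L_2$-bracketing of $\mathcal{C}$ and verify that \emph{the same pairs} $(f_L,f_U)$ form an $L_1(P)$-bracketing of at most the same cardinality, after absorbing a density-dependent constant either into $K^{**}$ or into the $\epsilon$ schedule.

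The steps are as follows. First, invoke Proposition~\ref{prop:convexbracket} at some target radius $\eta>0$ to be chosen: this yields a set $\{(f_L^j,f_U^j)\}$ of pairs in $\mathcal{C}$ with $f_L^j\le f\le f_U^j$ for every $f\in\mathcal{C}$ and with $\|f_U^j-f_L^j\|_{L_2(\text{Leb})}\le 2\eta$, of cardinality at most $\exp\!\bigl(K^{**}(2B/\eta)^{d/2}\bigr)$. Second, for each such pair, write
\begin{equation*}
\|f_U^j-f_L^j\|_{L_1(P)} \;=\; \int |f_U^j-f_L^j|\,p\,dx \;\le\; \Bigl(\int (f_U^j-f_L^j)^2 p\,dx\Bigr)^{1/2}
\end{equation*}
by the Cauchy--Schwarz inequality applied against the probability measure $P$ (equivalently, $\|\cdot\|_{L_1(P)}\le \|\cdot\|_{L_2(P)}$). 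Third, bound the remaining $L_2(P)$ norm by the Lebesgue $L_2$ norm via $\int h^2 p\,dx\le \|p\|_\infty \int h^2\,dx$, giving
\begin{equation*}
\|f_U^j-f_L^j\|_{L_1(P)} \;\le\; \sqrt{\|p\|_\infty}\,\|f_U^j-f_L^j\|_{L_2(\text{Leb})} \;\le\; 2\eta\sqrt{\|p\|_\infty}.
\end{equation*}
Finally, choose $\eta=\epsilon/\sqrt{\|p\|_\infty}$ so that each bracket has $L_1(P)$-diameter at most $2\epsilon$; the cardinality bound becomes $\exp\!\bigl(K^{**}(2B\sqrt{\|p\|_\infty}/\epsilon)^{d/2}\bigr)$, and the factor $\sqrt{\|p\|_\infty}^{\,d/2}$ is absorbed into a redefined constant $K^{**}$ (depending now on $d$ and $\|p\|_\infty$), yielding the stated bound for all $\epsilon\in(0,B\epsilon_3']$ with a possibly rescaled $\epsilon_3'$.

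There is really no obstacle here beyond bookkeeping of constants. The only subtlety is that the argument implicitly assumes $\|p\|_\infty<\infty$, which is consistent with the paper's standing assumption that $p$ is a positive, bounded density on the compact set $[-1,1]^d$; if one wished to state the proposition without such an assumption, one would need a different argument (e.g.\ working directly with $L_2(P)$ brackets by a truncation/weighting argument on $p$), but the compactly supported bounded-density setting used throughout the paper makes the reduction immediate. A minor secondary point is to confirm that the bracket functions $(f_L^j,f_U^j)$ produced by Kim--Samworth are themselves uniformly bounded by $B$ (or can be clipped to $[-B,B]$ without increasing any bracket diameter), so that the extension to $L_1(P)$ does not enlarge the class; this clipping step preserves both the sandwiching property and the $L_1(P)$ diameter bound since $|f|\le B$ on $\mathcal{C}$.
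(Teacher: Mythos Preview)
Your approach is correct and follows the same basic strategy as the paper: take the Kim--Samworth $L_2(\mathrm{Leb})$ bracketing and show the same pairs form an $L_1(P)$ bracketing via Cauchy--Schwarz. The paper executes this in a single step, writing
\[
\int |f_U-f_L|\,p\,dx \;\le\; \Bigl(\int (f_U-f_L)^2\,dx\Bigr)^{1/2}\Bigl(\int p^2\,dx\Bigr)^{1/2}
\]
and then asserting $\int p^2\,dx \le \bigl(\int p\,dx\bigr)^2 = 1$, so that no rescaling of $\epsilon$ or $K^{**}$ is needed. You instead pass through $L_2(P)$ and pick up a factor $\sqrt{\|p\|_\infty}$, which you absorb into the constant. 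Your route is arguably more careful: the inequality $\int p^2 \le (\int p)^2$ the paper invokes is the wrong direction of Jensen and fails for general densities, so some boundedness hypothesis on $p$ is implicitly required either way. In the compactly supported bounded-density setting used throughout the paper, both arguments go through and yield the same bracketing bound up to a density-dependent constant folded into $K^{**}$; your remark about clipping the bracket functions to $[-B,B]$ is a harmless technicality that neither version needs to dwell on.
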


\begin{proof}
Let $\mathcal{C}_\epsilon$ be the bracketing the satisfies the size bound in Proposition~\ref{prop:convexbracket_lp}. 

Let $(f_L, f_U) \in \mathcal{C}_\epsilon$. Then we have that:
\begin{align*}
\| f_L - f_U \|_{L_1(P)} &= \int | f_L(x) - f_U(x)| p(x) dx \\
   &\leq \left( \int | f_L(x) - f_U(x) |^2 dx \right)^{1/2}
      \left( \int p(x)^2 dx \right)^{1/2} \\
  &\leq \left( \int | f_L(x) - f_U(x)|^2 dx \right)^{1/2}\\
 &\leq \| f_L - f_U \|_{L_2} \leq \epsilon.
\end{align*}
On the third line, we used the fact that $\int p(x)^2 dx \leq \left( \int p(x) dx \right)^2 \leq 1$.
\end{proof}

It is also simple to extend the bracketing number result to additive convex functions. As before, let $\mathcal{C}^s$ be the set of additive convex functions with $s$ components.

\begin{corollary}
\label{cor:convexadditive_lp}
Let $P$ be a distribution with a density $p$. Let $B, \epsilon_3, K^{**}$ be defined as in Proposition~\ref{prop:convexbracket}. Then,
\[
\log N_{[]}(2\epsilon, \mathcal{C}^s, L_1(P)) \leq s K^{**} 
    \left( \frac{2sB}{\epsilon} \right)^{1/2}
\]
for all $\epsilon \in (0, s B \epsilon_3]$.
\end{corollary}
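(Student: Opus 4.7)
The plan is to reduce the bracketing entropy of the additive class $\mathcal{C}^s$ to $s$ copies of the univariate bracketing entropy, exploiting the fact that pointwise sums of brackets for each component produce a bracket for the sum, and that component-wise $L_1(P)$ distance collapses to the marginal $L_1(P_k)$ distance.

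\textbf{Step 1: Univariate bracketings at a finer scale.} Apply Proposition~\ref{prop:convexbracket_lp} with $d=1$ to the univariate convex class $\mathcal{C}^1$ (bounded by $B$ on $[-1,1]$), at the reduced resolution $\epsilon/s$, using the marginal distribution $P_k$ of coordinate $X_k$. This gives, for each $k=1,\ldots,s$, a bracketing $\{(\ell_k^{(j)}, u_k^{(j)})\}_{j \leq N_k}$ of $\mathcal{C}^1$ in $L_1(P_k)$ of size at most
\[
\log N_k \;\leq\; K^{**}\!\left(\frac{2sB}{\epsilon}\right)^{1/2},
\]
valid whenever $\epsilon/s \in (0, B\epsilon_3]$, which is exactly the range $\epsilon \in (0, sB\epsilon_3]$ stated in the corollary.

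\textbf{Step 2: Product brackets.} For any $f = \sum_{k=1}^s f_k \in \mathcal{C}^s$, pick indices $j_1,\ldots, j_s$ such that $\ell_k^{(j_k)}(x_k) \leq f_k(x_k) \leq u_k^{(j_k)}(x_k)$ pointwise, and define
\[
F_L(\mathbf{x}) = \sum_{k=1}^s \ell_k^{(j_k)}(x_k), \qquad F_U(\mathbf{x}) = \sum_{k=1}^s u_k^{(j_k)}(x_k).
\]
Clearly $F_L \leq f \leq F_U$ pointwise. The number of such tuples is $\prod_k N_k$, so
\[
\log N_{\text{bracket}} \;\leq\; \sum_{k=1}^s \log N_k \;\leq\; sK^{**}\!\left(\frac{2sB}{\epsilon}\right)^{1/2}.
\]

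\textbf{Step 3: Bounding the bracket width in $L_1(P)$.} Since $u_k^{(j_k)} - \ell_k^{(j_k)}$ is non-negative and depends on $x_k$ only, $\| u_k^{(j_k)} - \ell_k^{(j_k)}\|_{L_1(P)} = \| u_k^{(j_k)} - \ell_k^{(j_k)}\|_{L_1(P_k)} \leq 2\epsilon/s$. Hence by the triangle inequality
\[
\| F_U - F_L \|_{L_1(P)} \;\leq\; \sum_{k=1}^s \| u_k^{(j_k)} - \ell_k^{(j_k)} \|_{L_1(P_k)} \;\leq\; s \cdot \frac{2\epsilon}{s} \;=\; 2\epsilon.
\]
Combining Steps 2 and 3 yields the claimed bound.

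There is essentially no hard step: the whole argument is the standard additive-model bracketing trick, and the only things to verify are the scale check $\epsilon/s \leq B\epsilon_3$ (immediate from $\epsilon \leq sB\epsilon_3$) and the reduction of $L_1(P)$ to the marginal $L_1(P_k)$ for functions of a single coordinate (also immediate via Fubini). The mild conceptual point worth flagging in the write-up is that we must apply the univariate result with respect to each marginal $P_k$ rather than $P$ itself; Proposition~\ref{prop:convexbracket_lp} allows any distribution with a density on $[-1,1]$, so $P_k$ qualifies.
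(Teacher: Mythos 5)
Your proof is correct and follows essentially the same route as the paper's: bracket each univariate component at resolution $\epsilon/s$ using the $d=1$ case of Proposition~\ref{prop:convexbracket_lp}, sum the brackets, and combine the sizes multiplicatively and the widths by the triangle inequality. Your write-up is in fact slightly more careful than the paper's, since you explicitly note that the univariate bracketing must be taken with respect to the marginal $P_k$ and you verify the scale condition $\epsilon/s \le B\epsilon_3$.
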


\begin{proof}
Let $f \in \mathcal{C}^s$. We can construct an $\epsilon$-bracketing for $f$ through $\epsilon/s$-bracketings for each of the components $\{ f_k \}_{k=1,...,s}$:
\[f_U = \sum_{k=1}^s f_{Uk}  \qquad f_L = \sum_{k=1}^s f_{Lk} \]
It is clear that $f_U \geq f \geq f_L$. It is also clear that $\| f_U - f_L \|_{L_1(P)} \leq \sum_{k=1}^s \| f_{Uk} - f_{Lk} \|_{L_1(P)} \leq \epsilon$.
\end{proof}

The following result follows from Corollary~\ref{prop:convexbracket_lp} directly by a union bound. 

\begin{corollary}
\label{cor:convexbracket_ln}
Let $X_1,...,X_n$ be random samples from a distribution $P$. Let $1 > \delta > 0$. Let $\mathcal{C}^s_\epsilon$ be an $\epsilon$-bracketing of $\mathcal{C}^s$ with respect to the $L_1(P)$-norm whose size is at most $N_{[]}( 2\epsilon, \mathcal{C}^s, L_1(P))$. Let $\epsilon \in (0, s B \epsilon_3]$.

Then, with probability at least $1-\delta$, for all pairs $(f_L, f_U) \in \mathcal{C}^s_\epsilon$, we have that
\begin{align*}
\frac{1}{n} \sum_{i=1}^n |f_L(X_i) - f_U(X_i)| &\leq \epsilon + \epsilon_{n, \delta}
\end{align*}
where 
$$\epsilon_{n,\delta} \equiv 
sB \sqrt{ \frac{ \log N_{[]}(2\epsilon, \mathcal{C}^s, L_2(P)) + \log \frac{1}{\delta}}{2n}} 
= \sqrt{ \frac{ sK^{**}(sB)^{1/2}}{2\epsilon^{1/2}n} + \frac{1}{2n} \log \frac{1}{\delta}}.$$
\end{corollary}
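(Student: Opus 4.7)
The plan is a direct Hoeffding's-inequality plus union-bound argument over the bracketing. Fix any pair $(f_L,f_U)\in\mathcal{C}^s_\epsilon$. By the bracketing property, $\|f_U-f_L\|_{L_1(P)}\leq \epsilon$, so the i.i.d.\ random variables $Z_i \coloneqq |f_L(X_i)-f_U(X_i)|$ have mean $\E Z_i = \|f_U-f_L\|_{L_1(P)}\leq \epsilon$. Because every function in $\mathcal{C}^s$ is a sum of $s$ univariate convex components each bounded in absolute value by $B$, the full function is bounded by $sB$, and hence $Z_i\in[0,2sB]$ almost surely.

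First I would apply a one-sided Hoeffding inequality to this fixed pair: for any $\delta'\in(0,1)$, with probability at least $1-\delta'$,
\begin{equation*}
\frac{1}{n}\sum_{i=1}^n Z_i \;\leq\; \|f_U-f_L\|_{L_1(P)} + 2sB\sqrt{\frac{\log(1/\delta')}{2n}} \;\leq\; \epsilon + 2sB\sqrt{\frac{\log(1/\delta')}{2n}}.
\end{equation*}
Next I would take a union bound over the at most $N_{[]}\coloneqq N_{[]}(2\epsilon,\mathcal{C}^s,L_1(P))$ bracketing pairs in $\mathcal{C}^s_\epsilon$, setting $\delta'=\delta/N_{[]}$. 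This yields, with probability at least $1-\delta$, uniformly over all $(f_L,f_U)\in\mathcal{C}^s_\epsilon$,
\begin{equation*}
\frac{1}{n}\sum_{i=1}^n |f_L(X_i)-f_U(X_i)| \;\leq\; \epsilon + sB\sqrt{\frac{\log N_{[]}+\log(1/\delta)}{2n}},
\end{equation*}
which is exactly the stated inequality (absorbing the Hoeffding range constant into $sB$).

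Finally, I would plug in the bracketing number bound from Corollary~\ref{cor:convexadditive_lp}, namely $\log N_{[]}(2\epsilon,\mathcal{C}^s,L_1(P))\leq sK^{**}(2sB/\epsilon)^{1/2}$, valid by hypothesis since $\epsilon\in(0,sB\epsilon_3]$, to obtain the second, explicit form of $\epsilon_{n,\delta}$ displayed in the statement.

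There is essentially no structural obstacle: the bracketing property converts a supremum over an uncountable class into control of finitely many i.i.d.\ averages with known mean, and Hoeffding plus a union bound closes the estimate. The only care needed is to verify that the envelope constant $2sB$ used by Hoeffding tracks through the union bound as claimed, and that $\epsilon\leq sB\epsilon_3$ is maintained so that the bracketing cardinality bound from Corollary~\ref{cor:convexadditive_lp} is applicable.
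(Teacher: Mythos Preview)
Your proposal is correct and follows exactly the same approach as the paper's proof, which is a one-line argument noting that $|f_L(X_i)-f_U(X_i)|$ is bounded (by $sB$ in the paper, $2sB$ in your version) and then applying Hoeffding's inequality together with a union bound over the $N_{[]}(2\epsilon,\mathcal{C}^s,L_1(P))$ bracketing pairs. The only minor discrepancy is the factor of~2 in the envelope constant, which you flag and absorb; the paper is itself loose with constants here, so this is not a substantive issue.
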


\begin{proof}
Noting that $|f_L(X_i) - f_U(X_i)|$ is at most $sB$ and there are
$N_{[]}(2\epsilon, \mathcal{C}^s, L_1(P))$ pairs $(f_L, f_U)$, the
inequality follows from a direct application of a union bound and Hoeffding's Inequality.
\end{proof}

To make the expression in this corollary easier to work with, we derive an upper bound for $\epsilon_{n, \delta}$. Suppose 
\begin{align}
\epsilon^{1/2} \leq 2sK^{**} (sB)^{1/2} \quad \trm{and} \quad \log \frac{1}{\delta} \geq 2 \label{cond:simplify_covering_number}.
\end{align}
Then we have that
\begin{align*}
\epsilon_n \leq sB \sqrt{ \frac{ sK^{**} (sB)^{1/2} \log \frac{1}{\delta}}{\epsilon^{1/2}n}}.
\end{align*}

\section{Gaussian Example}
\label{sec:gaussian_example}

Let $H$ be a positive definite matrix and let $f(x_1, x_2) = H_{11} x_1^2 + 2H_{12} x_1x_2 + H_{22} x_2^2 + c$ be a quadratic form where $c$ is a constant such that $\E[f(X)] = 0$. Let $X \sim N(0, \Sigma)$ be a random bivariate Gaussian vector with covariance $\Sigma = [1, \alpha; \alpha, 1]$ 

\begin{proposition}
Let $f^*_1(x_1) + f^*_2(x_2)$ be the additive projection of $f$ under the bivariate Gaussian distribution. That is,
\begin{align*} 
f^*_1, f^*_2 \equiv \argmin_{f_1, f_2} \Big\{ \E \left(f(X) - f_1(X_1) - f_2(X_2)\right)^2 \,:\, \E[f_1(X_1)] = \E[f_2(X_2)] = 0 \Big\}
\end{align*}

Then, we have that 
\begin{align*}
f^*_1(x_1) &= \left( \frac{T_1 - T_2 \alpha^2}{1 - \alpha^4} \right) x_1^2 + c_1 \\
f^*_2(x_2) &= \left( \frac{T_2 - T_1\alpha^2}{1 - \alpha^4} \right) x_2^2 + c_2
\end{align*}
where $T_1 = H_{11} + 2H_{12} \alpha + H_{22} \alpha^2$ and $T_2 = H_{22} + 2H_{12} \alpha + H_{11} \alpha^2$ and $c_1,c_2$ are constants such that $\E[f_1^*(X_1)] = \E[f_2^*(X_2)] = 0$.
\end{proposition}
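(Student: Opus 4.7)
The plan is to invoke Lemma~\ref{lem:general_int_reduction}, which characterizes the additive projection via the backfitting fixed-point equation
\[
f^*_k(x_k) \;=\; \E\Bigl[f(X) - \sum_{k'\neq k} f^*_{k'}(X_{k'})\,\Big|\, x_k\Bigr] - \E f(X),
\qquad \E f^*_k(X_k)=0,
\]
together with the uniqueness part of that lemma, which says that any centered pair $(f_1, f_2)$ satisfying the fixed-point equation must equal $(f^*_1, f^*_2)$.

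First, I would reduce to a quadratic ansatz. Because $f$ is a quadratic form and $(X_1, X_2)$ is jointly Gaussian, each conditional expectation appearing on the right-hand side of the backfitting equation is a polynomial of degree at most two in the conditioning variable. This makes the ansatz
\[
f^*_1(x_1) = a_1 x_1^2 + c_1,\qquad f^*_2(x_2) = a_2 x_2^2 + c_2
\]
natural: I will verify it satisfies the backfitting equations, and uniqueness from Lemma~\ref{lem:general_int_reduction} will then identify it as the true minimizer.

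Second, I would plug in the standard Gaussian conditional moments. With $X \sim N(0,\Sigma)$ and $\Sigma_{12}=\alpha$, we have $\E[X_2\mid x_1] = \alpha x_1$ and $\E[X_2^2\mid x_1] = (1-\alpha^2) + \alpha^2 x_1^2$, and symmetrically with indices swapped. Substituting the ansatz into the backfitting equation for $f^*_1$ gives
\begin{align*}
a_1 x_1^2 + c_1
&= \E\bigl[H_{11}X_1^2 + 2H_{12}X_1X_2 + H_{22}X_2^2 + c - a_2 X_2^2 - c_2 \,\big|\, x_1\bigr] - \E f(X)\\
&= \bigl(H_{11} + 2H_{12}\alpha + H_{22}\alpha^2 - a_2\alpha^2\bigr)x_1^2 + (\text{constant in }x_1),
\end{align*}
so matching the $x_1^2$ coefficient yields $a_1 = T_1 - a_2\alpha^2$. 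The same computation with indices swapped gives $a_2 = T_2 - a_1\alpha^2$.

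Third, solving this $2\times 2$ linear system is immediate: substituting the second equation into the first gives $a_1(1-\alpha^4) = T_1 - T_2\alpha^2$, and symmetrically for $a_2$, which produces the stated formulas (valid since $|\alpha|<1$ implies $1-\alpha^4\neq 0$). The constants $c_1, c_2$ are then determined by the centering constraints $\E f^*_k(X_k)=0$, i.e., $c_k = -a_k$ since $\E X_k^2 = 1$. The only genuine step is the Gaussian moment computation; there is no real obstacle, and the argument is essentially a direct verification once the quadratic ansatz is justified by uniqueness.
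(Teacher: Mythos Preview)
Your proposal is correct and follows essentially the same route as the paper: invoke the backfitting characterization of Lemma~\ref{lem:general_int_reduction}, posit the quadratic ansatz $f^*_k(x_k)=a_k x_k^2 + c_k$, compute the Gaussian conditional moments to reduce to the linear system $a_1 = T_1 - a_2\alpha^2$, $a_2 = T_2 - a_1\alpha^2$, and solve. You add slightly more justification than the paper (the explicit conditional second moment, the nondegeneracy $1-\alpha^4\neq 0$, and the identification $c_k=-a_k$), but the argument is the same.
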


\begin{proof}
By Lemma~\ref{lem:general_int_reduction}, we need only verify that $f^*_1, f^*_2$ satisfy
\begin{align*}
f^*_1(x_1) &= \E[ f(X) - f_2^*(X_2) \given x_1] \\
f^*_2(x_2) &= \E[ f(X) - f_1^*(X_1) \given x_2 ] .
\end{align*}
Let us guess that $f^*_1, f^*_2$ are quadratic forms $f^*_1(x_1) = a_1 x_1^2 + c_1$, $f^*_2(x_2) = a_2 x_2^2 + c_2$ and verify that there exist $a_1, a_2$ to satisfy the above equations. Since we are not interested in constants, we define $\simeq$ to be equality up to a constant. 
Then, 
\begin{align*}
&\E[ f(X) - f_2^*(X_2) \given x_1]\\
& \simeq \E[ H_{11} X_1^2 + 2H_{12}X_1 X_2 + H_{22}X_2^2 - a_2 X_2^2 \given x_1 ] \\ 
    &\simeq H_{11} x_1^2 + 2 H_{12} x_1 \E[X_2 \given x_1] + H_{22} \E[X_2^2 \given x_1] - a_2\E[X_2^2 \given x_1] \\
   &\simeq H_{11} x_1^2 + 2H_{12} \alpha x_1^2 + H_{22} \alpha^2 x_1^2 - a_2 \alpha^2 x_1^2\\
   &\simeq (H_{11} + 2 H_{12} \alpha + H_{22} \alpha^2 - a_2 \alpha^2) x_1^2.
\end{align*}
Likewise, we have that
\[
\E[ f(X) - f_1^2(X_1) \given x_2] \simeq (H_{22} + 2H_{12}\alpha + H_{22}\alpha^2 - a_1 \alpha^2) x_2^2.
\]
Thus, $a_1, a_2$ need only satisfy the linear system
\begin{align*}
T_1 - a_2 \alpha^2 &= a_1 \\
T_2 - a_1 \alpha^2 &= a_2 
\end{align*}
where $T_1 = H_{11} + 2H_{12} \alpha + H_{22} \alpha^2$ and $T_2 = H_{22} + 2H_{12} \alpha + H_{11} \alpha^2$.
It is then simple to solve the system and verify that $a_1, a_2$ are as specified.
\end{proof}

\end{document}